\numberwithin{equation}{section}
\newcommand{\stirling}[2]{\genfrac{[}{]}{0pt}{}{#1}{#2}}
\newcommand{\stirlingsec}[2]{\genfrac{\{}{\}}{0pt}{}{#1}{#2}}
\newcommand{\N}{\mathbb{N}}
\newcommand{\Z}{\mathbb{Z}}
\newcommand{\R}{\mathbb{R}}
\newcommand{\C}{\mathbb{C}}
\newcommand{\1}{\mathbbm{1}}
\newcommand{\eps}{\varepsilon}
\DeclareMathSymbol{\mlq}{\mathord}{operators}{``}
\DeclareMathSymbol{\mrq}{\mathord}{operators}{`'}
\newcommand*\xbar[1]{%
   \hbox{%
     \vbox{%
       \hrule height 0.5pt 
       \kern0.25ex
       \hbox{%
         \kern-0.05em
         \ensuremath{#1}%
         \kern-0.1em
       }%
     }%
   }%
}
\DeclareMathOperator{\E}{\mathbb{E}}
\DeclareMathOperator{\Cov}{Cov}
\DeclareMathOperator{\conv}{conv}
\DeclareMathOperator{\pos}{pos}
\newcommand{\ii}{{\rm{i}}}
\renewcommand{\P}{\mathbb{P}}
\renewcommand{\Re}{\operatorname{Re}}
\renewcommand{\Im}{\operatorname{Im}}
\newcommand{\Var}{\mathop{\mathrm{Var}}\nolimits}
\newcommand{\Lah}{\mathop{\mathrm{Lah}}\nolimits}
\newcommand{\eqdistr}{\stackrel{d}{=}}
\newcommand{\todistr}{\overset{d}{\underset{n\to\infty}\longrightarrow}}
\newcommand{\todistrD}{\overset{d}{\underset{n\to\infty}\Longrightarrow}}
\newcommand{\toas}{\overset{a.s.}{\underset{n\to\infty}\longrightarrow}}
\newcommand{\ton}{\overset{}{\underset{n\to\infty}\longrightarrow}}
\newcommand{\tond}{\overset{}{\underset{d\to\infty}\longrightarrow}}
\newcommand{\dd}{{\rm d}}
\newcommand{\eee}{{\rm e}}
\theoremstyle{plain}
\newtheorem{theorem}{Theorem}[section]
\newtheorem{lemma}[theorem]{Lemma}
\newtheorem{corollary}[theorem]{Corollary}
\newtheorem{proposition}[theorem]{Proposition}
\newtheorem{conjecture}[theorem]{Conjecture}
\theoremstyle{definition}
\newtheorem{definition}[theorem]{Definition}
\theoremstyle{remark}
\newtheorem{remark}[theorem]{Remark}
\def\@tocline#1#2#3#4#5#6#7{\relax
  \ifnum #1>\c@tocdepth 
  \else
    \par \addpenalty\@secpenalty\addvspace{#2}%
    \begingroup \hyphenpenalty\@M
    \@ifempty{#4}{%
      \@tempdima\csname r@tocindent\number#1\endcsname\relax
    }{%
      \@tempdima#4\relax
    }%
    \parindent\z@ \leftskip#3\relax \advance\leftskip\@tempdima\relax
    \rightskip\@pnumwidth plus4em \parfillskip-\@pnumwidth
    #5\leavevmode\hskip-\@tempdima
      \ifcase #1
       \or\or \hskip 1em \or \hskip 2em \else \hskip 3em \fi%
      #6\nobreak\relax
    \dotfill\hbox to\@pnumwidth{\@tocpagenum{#7}}\par
    \nobreak
    \endgroup
  \fi}
\begin{document}

\author{Zakhar Kabluchko}
\address{Zakhar Kabluchko: Institut f\"ur Mathematische Stochastik,
Westf\"alische {Wilhelms-Uni\-ver\-sit\"at} M\"unster,
Or\-l\'e\-ans--Ring 10,
48149 M\"unster, Germany}
\email{zakhar.kabluchko@uni-muenster.de}

\author{Alexander Marynych}
\address{Alexander Marynych: Faculty of Computer Science and Cybernetics, Taras Shevchenko National University of Kyiv, Kyiv 01601,
Ukraine}
\email{marynych@unicyb.kiev.ua}

\title[Lah distribution]{Lah distribution: Stirling numbers, records on compositions,  and convex hulls of high-dimensional random walks}

\begin{abstract}
Let $\xi_1,\xi_2,\ldots$ be a sequence of independent copies of a random vector in $\R^d$ having an absolutely continuous distribution. Consider a random walk $S_i:=\xi_1+\cdots+\xi_i$, and let $C_{n,d}:=\conv(0,S_1,S_2,\ldots,S_n)$ be the convex hull of the first $n+1$ points it has visited. The polytope $C_{n,d}$ is called $k$-neighborly if for any indices $0\leq i_1 <\cdots < i_k\leq n$ the convex hull of the $k$ points $S_{i_1},\ldots, S_{i_k}$ is a $(k-1)$-dimensional face of $C_{n,d}$. We study the probability that $C_{n,d}$ is $k$-neighborly in various high-dimensional asymptotic regimes, i.e.\  when $n$, $d$, and possibly also $k$ diverge to $\infty$. There is an explicit formula for the expected number of $(k-1)$-dimensional faces of $C_{n,d}$ which involves Stirling numbers of both kinds. Motivated by this formula,  we introduce a distribution, called the Lah distribution, and study its properties. In particular, we provide a combinatorial interpretation of the Lah distribution in terms of random compositions and records, and explicitly compute its factorial moments. Limit theorems which we prove for the Lah distribution imply neighborliness properties of $C_{n,d}$. This yields a new class of random polytopes exhibiting phase transitions parallel to those discovered by Vershik and Sporyshev, Donoho and Tanner for random projections of regular simplices and crosspolytopes.
\end{abstract}

\keywords{Stirling numbers, Lah numbers, Lah distribution, records, random compositions, random walks, random polytopes, convex hulls, neighborliness, $f$-vectors, mod-Poisson convergence, central limit theorem, large deviations, Lambert $W$-function, threshold phenomena, conic intrinsic volumes, Weyl chambers}

\subjclass[2010]{Primary: 11B73, 60C05; Secondary: 60D05, 52A22, 52A23, 60F05, 60F10, 30C15, 26C10, 05A16, 05A18}

\maketitle

\tableofcontents

\section{Introduction and summary of main results}
\subsection{Introduction}
The aim of the present paper is to introduce and study a family of discrete probability distributions defined in terms of Stirling numbers of both kinds and Lah numbers. Recall, see for example~\cite[Section~6.1]{Graham1994}, that the Stirling numbers of the first kind, denoted by $\stirling{n}{k}$, count the  number of permutations of $n$ objects with exactly $k$ disjoint cycles, while the Stirling numbers of the second kind, denoted by $\stirlingsec{n}{k}$, count the number of ways to partition a set of $n$ elements into $k$ nonempty subsets.
Alternatively, Stirling numbers can be defined by the exponential generating functions via the identities
\begin{equation}\label{eq:stirling_gen_funct}
\frac 1 {k!} \left(\log \frac{1}{1-x}\right)^k = \sum_{n=k}^\infty \stirling{n}{k} \frac{x^n}{n!}\qquad\text{and}\qquad \frac{1}{k!} (\eee^x - 1)^k = \sum_{n=k}^\infty \stirlingsec{n}{k} \frac{x^n}{n!},\qquad k=0,1,2,\ldots.
\end{equation}
The \textit{Lah number} $L(n,k)$ can be defined as the number of ways to partition the set  $\{1,\ldots,n\}$ into $k$ non-empty subsets and to linearly order the elements inside each subset. It is known that $L(n,k)$ is given by
\begin{equation}\label{eq:lah_numb_def}
L(n,k) = \sum_{j=k}^n \stirling {n}{j} \stirlingsec{j}{k} = \frac{(n-1)!}{(k-1)!} \binom nk = \frac{n!}{k!}\binom {n-1}{k-1}, \quad n\in \N,\quad k\in \{1,\ldots,n\}.
\end{equation}
These numbers were introduced by Ivo Lah~\cite{Lah1954} whose name they now bear; see entry  A105278 in~\cite{sloane} and~\cite{daboul} for their properties. We can now define the family of distributions we are interested in.

\begin{definition}
A random variable $X=\Lah(n,k)$ has a \textit{Lah distribution} with parameters $n\in\N$ and $k\in \{1,\ldots,n\}$ if
\begin{equation}\label{eq:lah_distr_def}
\P[X=j] = \frac{1}{L(n,k)} \stirling{n}{j} \stirlingsec{j}{k},
\qquad
j\in \{k,k+1,\ldots,n\}.
\end{equation}
Throughout the paper, we agree that $\Lah(n,k)$ denotes some random variable with distribution~\eqref{eq:lah_distr_def}.
\end{definition}

The special case of the Lah distribution with $k=1$ is well known to be the distribution of the number of cycles in a random uniform permutation of $\{1,\ldots,n\}$, or the number of  records in a sample of $n$ independent identically distributed (i.i.d.)\ observations with a continuous distribution function. We shall extend the latter interpretation to arbitrary $k$,
but the original motivation for introducing the Lah distribution comes from the study of threshold phenomena for  high-dimensional random polytopes initiated in the pioneering work of Vershik and Sporyshev~\cite{vershik_sporyshev_asymptotic_faces_random_polyhedra1992} and continued in a series of works of Donoho and Tanner~\cite{donoho_neighborliness_proportional,donoho_tanner_neighborliness,donoho_tanner,DonohoTanner}.
As has been suggested by Vershik in his Grassmannian approach to linear programming~\cite{vershik_sporyshev_estimation_simplex1983},  these threshold phenomena have multiple implications in high-dimensional statistics, signal processing, linear optimization and other fields; for numerous examples we refer to the above cited papers as well as~\cite{ALMT14,baldi_vershynin,donoho_tanner_sparse_nonnegative_sol,donoho_tanner_observed_universality,donoho_tanner_exponential_bounds,vershik_sporyshev_estimation_simplex1983,vershik_sporyshev_asymptotic_estimate_1986}.
Let us briefly recall the problem studied by Vershik, Sporyshev, Donoho and Tanner.
Consider $n$ i.i.d.\ standard Gaussian points $X_1,\ldots,X_n$ in the $d$-dimensional space, where $n\geq d+1$.
Their convex hull $G_{n,d} := \conv(X_1,\ldots,X_n)$ is called the Gaussian polytope. Let $f_{k-1}(G_{n,d})$ be the number of $(k-1)$-dimensional faces of $G_{n,d}$, for $k\in \{1,\ldots,d\}$. With probability $1$, every $(k-1)$-dimensional face is a simplex of the form $\conv(X_{i_1},\ldots, X_{i_k})$ for some $k$-tuple of pairwise different indices $i_1,\ldots, i_k \in \{1,\ldots,n\}$.  Clearly, $f_{k-1}(G_{n,d})$ is bounded from above by the number of such $k$-tuples, that is, by $\binom {n}{k}$. If this bound is attained, the polytope $G_{n,d}$ is called \textit{$k$-neighborly}; see~\cite[Chapter~7]{gruenbaum_book}. Vershik and Sporyshev~\cite{vershik_sporyshev_asymptotic_faces_random_polyhedra1992} studied the so-called proportional growth regime in which $d,n,k\to \infty$ in such a way that $k/d\to \rho$ and $d/n \to \delta$ for some constants $\rho \in (0,1)$ and $\delta\in (0,1)$. They proved the existence of what has been later called a weak threshold, that is, a positive function $\delta\mapsto \rho^{\text{GP}}_{\text{weak}}(\delta)$ such that
$$
\lim_{n,d,k\to\infty} \frac{\E f_{k-1}(G_{n,d})}{\binom{n}{k}} =1
\qquad
\text{ provided }
\rho < \rho^{\text{GP}}_{\text{weak}}(\delta).
$$
Later, Donoho and Tanner~\cite{donoho_tanner_neighborliness} proved the existence of what they called a strong threshold, that is, a positive function $\delta\mapsto \rho^{\text{GP}}_{\text{strong}}(\delta)$ such that
$$
\lim_{n,d,k\to\infty} \left(\binom{n}{k} - \E f_{k-1}(G_{n,d})\right) = 0
\qquad
\text{ provided }
\rho < \rho^{\text{GP}}_{\text{strong}}(\delta).
$$
From this relation, they deduced that
$$
\lim_{n,d,k\to\infty} \P\left[f_{k-1}(G_{n,d}) = \binom{n}{k}\right]  = 1
\qquad
\text{ provided }
\rho < \rho^{\text{GP}}_{\text{strong}}(\delta).
$$
The same conclusions hold for the projection of the regular simplex with $n$ vertices on a random uniform $d$-dimensional subspace, which has the same expected $f$-vector as $G_{n,d}$ by a result of Baryshnikov and Vitale~\cite{BV94}. Analogous theory exists, see~\cite{donoho_neighborliness_proportional}, for random projections of the regular crosspolytope or, equivalently, the symmetric Gaussian polytope defined as $\conv(\pm X_1,\ldots, \pm X_n)$. Going beyond the proportional growth setting, Donoho and Tanner~\cite{donoho_tanner} studied also the case when $\delta=0$.
Recently, there has been also interest in the threshold phenomena for random cones as the dimension goes to $\infty$; see~\cite{DonohoTanner,GKT2020_HighDimension1,HugSchneiderThresholdPhenomena,HugSchneiderThresholdPhenomenaPart2}.

\subsection{Convex hulls of random walks}
In the present paper we shall be interested in neighborliness properties of a class of random polytopes defined as follows.
Let $\xi_1,\xi_2,\ldots$ be i.i.d.\ random variables with an absolutely continuous distribution on $\R^d$. These assumptions can be weakened; see Section~\ref{subsec:convex_hulls_random_walks} below for details. Consider the $d$-dimensional random walk $(S_i)_{i=0}^\infty$ defined by $S_i := \xi_1+\cdots+\xi_i$, $i\in \N$, and $S_0 := 0$. We are interested in the convex hull of $S_0,\ldots, S_n$ which will be denoted by
\begin{equation}\label{eq:def_C_n_d}
C_{n,d} := \conv(S_0,\dots, S_n)
=\{\lambda_0 S_0+\dots+\lambda_n S_n \colon \lambda_0,\dots,\lambda_n \geq 0, \lambda_0+\dots+\lambda_n =1\},
\qquad
n\geq d.
\end{equation}
Let $f_{\ell}(C_{n,d})$ be the number of $\ell$-dimensional faces of the polytope $C_{n,d}$, for $\ell\in \{0,\ldots,d-1\}$.
The following explicit formula for the expected face numbers of $C_{n,d}$ has been obtained in~\cite{KVZ17}  relying  on the methods of~\cite{KVZ17b}:
\begin{equation}\label{eq:E_F_k_C_n_main_theorem_introduct}
\E f_{k-1}(C_{n,d}) = \frac{2\cdot (k-1)!} {n!} \sum_{l=0}^{\infty} \stirling{n+1}{d-2l}  \stirlingsec{d-2l}{k},
\qquad
k \in \{1,\ldots,d\}.
\end{equation}
In terms of the Lah distribution introduced above, the formula can be stated as follows:
\begin{equation}\label{eq:E_F_k_C_n_Lah_main_theorem_introduct}
\frac{\E f_{k-1}(C_{n,d})}{\binom{n+1}{k}} = 2 \P[\Lah(n+1,k) \in \{d,d-2,d-4,\ldots\}].
\end{equation}
We are interested in the high-dimensional limit when $n,d$ and, possibly, also $k$, go to $\infty$ in a coupled manner. Let us argue that probabilistic limit theorems for the Lah distribution imply threshold phenomena for $C_{n,d}$. Suppose, for example, that in some asymptotic regime $n=n(d)$, $k=k(d)$ we were able to prove a weak law of large numbers of the form
\begin{equation}\label{eq:weak_law_intro}
\frac{\Lah (n+1, k)}{\E \Lah(n+1,k)} \tond 1 \text{ in probability}.
\end{equation}
As we shall see below, the right-hand side of~\eqref{eq:E_F_k_C_n_Lah_main_theorem_introduct} does not differ much from the distribution function in the sense that the approximation
$$
\frac{\E f_{k-1}(C_{n,d})}{\binom{n+1}{k}}  \approx \P[\Lah(n+1,k) \leq  d].
$$
can be justified. The weak law of large numbers~\eqref{eq:weak_law_intro} then implies that
$$
\lim_{d\to\infty} \frac{\E f_{k-1}(C_{n,d})}{\binom{n+1}{k}}
=
\begin{cases}
1, & \text{ if } \limsup_{d\to\infty}d^{-1}\E \Lah(n+1,k) < 1,\\
0, & \text{ if }  \liminf_{d\to\infty}d^{-1}\E \Lah(n+1,k) > 1,
\end{cases}
$$
which means that there is a threshold phenomenon if $d$ is near $\E \Lah(n+1,k)$. In a similar way, a central limit theorem for $\Lah (n+1,k)$ would imply a characterization of the limit in the critical window.

\subsection{Summary of results}
Our goal is to investigate the properties of the Lah distribution. In particular, limit theorems for $\Lah(n,k)$ which we shall prove in various asymptotic regimes  of $n$ and $k$ yield threshold phenomena for convex hulls of $d$-dimensional random walks as $d\to \infty$. Our main results can be summarized as follows.
\begin{itemize}
\item[(a)] We provide a combinatorial interpretation of Lah distributions $\Lah(n,k)$ in terms of random compositions and records, which also allows us to construct the whole family of  random variables $\Lah(n,k)$ simultaneously for all $n\in\N$ and $k\in\{1,\ldots,n\}$ in a consistent way on a common probability space. This yields stochastic monotonicity of $\Lah (n,k)$ in $n$ and $k$. This combinatorial construction is a subject of Section~\ref{sec:combinatorics}.
\item[(b)] We compute explicitly the expectation, the variance and the factorial moments of the Lah distribution. For example, we show that
$$
\E \Lah(n,k)= \frac 1 {L(n,k)} \sum_{j=k}^n j \stirling{n}{j} \stirlingsec{j}{k}=
\frac{k} {\binom {n-1}{k-1}} \sum_{i=1}^{n-k+1} \frac{1}{i} \binom {n-i}{k-1}
=\frac{k} {\binom {n-1}{k-1}} \sum_{i=1}^{n-k+1} \frac{(-1)^{i+1}}{i} \binom {n}{k+i-1}.
$$
The aforementioned moment results as well as other basic properties of the Lah distribution are presented in Section~\ref{sec:distr_properties}.
\item[(c)] We prove that for fixed $k\in \N$, the random variables $\Lah(n,k)$ converge in the mod-Poisson sense with speed $\lambda_n= k\log n$, which implies several limit theorems including the central limit theorem
    $$
\frac{\Lah(n,k) - k \log n}{\sqrt{k\log n}} \todistr {\rm N}(0,1)
    $$
    as well as the precise asymptotics for the large deviations probabilities. This regime of fixed $k$ is analyzed in Section~\ref{sec:constant_k}.
\item[(d)] In the regime when $k=k(n)$ grows linearly with $n$, that is $k(n)\sim \alpha n$ with $\alpha\in (0,1)$, we prove a central limit theorem and a large deviation principle for $\Lah(n,k)$; see Section~\ref{sec:growing_k}.
\item [(e)] We apply these results to establish the aforementioned threshold phenomena for convex hulls of random walks in various asymptotic regimes of $n$, $d$, $k$ in Section~\ref{sec:convex_hulls_random_walks}.
\item[(f)] We explain how the Lah distribution is related to the conic intrinsic volume sums of Weyl chambers in Section~\ref{sec:weyl}.
\end{itemize}

\section{Combinatorial construction of the Lah distribution}\label{sec:combinatorics}

In this section we shall establish connections between Lah distributions and several classical probabilistic and combinatorial models. This connection will allow us to construct the family $(\Lah(n,k))_{n\in\N,1\leq k\leq n}$ in a consistent (simultaneously in $n$ and in $k$) way on a common probability space. This, in turn, leads to a useful representation of $\Lah(n,k)$ and also establishes some basic qualitative properties of Lah distributions such as, for example, stochastic monotonicity.
We start by observing  that, for $k=1$, the formula for the Lah distribution takes the form
\begin{equation}\label{eq:records}
\P[\Lah(n,1)=j] = \frac{1}{n!} \stirling{n}{j}, \qquad j\in \{1,\ldots,n\}.
\end{equation}
This special case pops up  at many places in probability theory, for example as the distribution of the number of cycles in a uniform random permutation of $n$ elements, as the distribution of the number of records in an independent sample of size $n$ from a continuous distribution, or as the distribution of $\sum_{\ell=1}^n B_{\ell}$, where $B_1,B_{2}, \ldots$ are independent Bernoulli variables with distributions $\P[B_\ell=1]=1-\P[B_\ell=0]= 1/\ell$, $\ell\in\N$.

In order to extend these representations of $\Lah(n,1)$ to other values of $k$, we need to recall the notion of random compositions.

\subsection{Random compositions and records}\label{subsec:compositions_aldous_coupling}
A \textit{composition} of a positive integer $n$ into $k$ summands (blocks) is a representation of $n$ as a sum $i_1+i_2+\cdots+i_k$ of $k$ positive integers in which the order of summands is essential. Thus, $1+3$, $3+1$ and $2+2$ are three different compositions of $n=4$ into $k=2$ summands. By the standard ``stars-and-bars'' argument there are exactly $\binom{n-1}{k-1}$ different compositions of $n$ into $k$ summands. Throughout this paper we let $(b^{(n)}_1,b^{(n)}_2,\ldots,b^{(n)}_k)$ denote a random composition of $n$ into $k$ summands picked uniformly at random, that is, with distribution
\begin{equation}\label{eq:uniform_composition_distribution}
\P[(b^{(n)}_1,b^{(n)}_2,\ldots,b^{(n)}_k)=(i_1,i_2,\ldots,i_k)]=\frac{1}{\binom{n-1}{k-1}},
\end{equation}
for every $i_1,i_2,\ldots,i_k\in\N$ summing up to $n$.
The family of random compositions $(b^{(n)}_1,b^{(n)}_2,\ldots,b^{(n)}_k)$ can be defined in a consistent way (simultaneously in $n\in\N$ and $k\in\{1,2,\ldots,n\}$) using the so-called Aldous' construction. We present its simplified version here in a form borrowed  from \cite{NBerestycki:09}, see Section 2.1.3 therein. Start with a chain of length $n$ connecting the labeled vertices $U_1,U_2,\ldots,U_n$, see Figure~\ref{fig:aldous} (first row). This chain represents the unique composition of $n$ into a single block and corresponds to time $1$ of our construction. At time $2$, pick one of $n-1$ edges uniformly at random and remove it. The resulting two connected components, see Figure~\ref{fig:aldous} (second row), induce a uniformly distributed random composition of $n$ into two summands. Proceeding this way and removing at time $k\in\{1,\ldots,n\}$ an edge picked uniformly at random among the existing $n-k+1$ edges, results in a consistent (in $k$) family of random compositions given by the sizes of connected components counted from left to right. The number of blocks at time $k$  (or in the $k$-th row) is $k$. According to Lemma 2.1 in \cite{NBerestycki:09}, the composition obtained after removing $k-1$ edges is uniformly distributed on the set of all partitions of $n$ into $k$ summands. Note that this construction is also consistent in $n$ in the following sense. If we start with $n+1$ vertices, construct compositions $(b^{(n+1)}_1,b^{(n+1)}_2,\ldots,b^{(n+1)}_k)$ for $k=1,2,\ldots,n+1$, and then remove completely the $(n+1)$-th column and a duplicated row (which necessarily appears upon deleting the $(n+1)$-th column), we obtain a family of uniform random compositions of $n$ into $k$ blocks distributed as $(b^{(n)}_1,b^{(n)}_2,\ldots,b^{(n)}_k)$ for $k=1,2,\ldots,n$.

  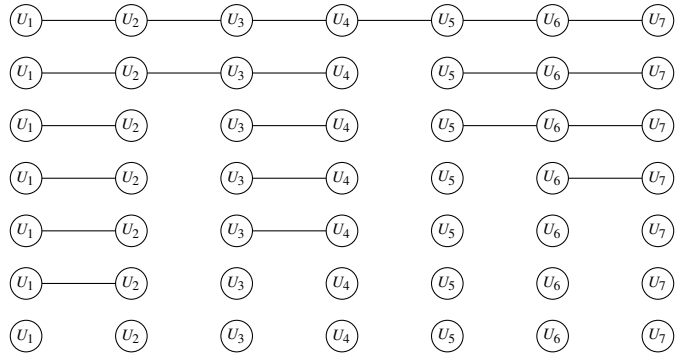
\begin{figure}[!hbtp]
    \centering
\begin{tikzpicture}[scale=0.70]
        \draw (-5.7,0) -- (-4.3,0);
        \draw (-3.7,0) -- (-2.3,0);
        \draw (-1.7,0) -- (-0.3,0);
        \draw (0.3,0) -- (1.7,0);
        \draw (2.3,0) -- (3.7,0);
        \draw (4.3,0) -- (5.7,0);

        \draw (-6,0) circle (0.3) node {\tiny{$U_1$}};
        \draw  (-4,0) circle (0.3) node {\tiny{$U_2$}};
        \draw  (-2,0) circle (0.3) node {\tiny{$U_3$}};
        \draw  (0,0) circle (0.3) node {\tiny{$U_4$}};
        \draw  (2,0) circle (0.3) node {\tiny{$U_5$}};
        \draw  (4,0) circle (0.3) node {\tiny{$U_6$}};
        \draw  (6,0) circle (0.3) node {\tiny{$U_7$}};

        \draw (-5.7,-1) -- (-4.3,-1);
        \draw (-3.7,-1) -- (-2.3,-1);
        \draw (-1.7,-1) -- (-0.3,-1);
        \draw (2.3,-1) -- (3.7,-1);
        \draw (4.3,-1) -- (5.7,-1);

        \draw (-6,-1) circle (0.3) node {\tiny{$U_1$}};
        \draw  (-4,-1) circle (0.3) node {\tiny{$U_2$}};
        \draw  (-2,-1) circle (0.3)  node {\tiny{$U_3$}};
        \draw  (0,-1) circle (0.3) node {\tiny{$U_4$}};
        \draw  (2,-1) circle (0.3) node {\tiny{$U_5$}};
        \draw  (4,-1) circle (0.3) node {\tiny{$U_6$}};
        \draw  (6,-1) circle (0.3) node {\tiny{$U_7$}};

        \draw (-5.7,-2) -- (-4.3,-2);
        \draw (-1.7,-2) -- (-0.3,-2);
        \draw (2.3,-2) -- (3.7,-2);
        \draw (4.3,-2) -- (5.7,-2);

        \draw (-6,-2) circle (0.3) node {\tiny{$U_1$}};
        \draw  (-4,-2) circle (0.3) node {\tiny{$U_2$}};
        \draw  (-2,-2) circle (0.3) node {\tiny{$U_3$}};
        \draw  (0,-2) circle (0.3) node {\tiny{$U_4$}};
        \draw  (2,-2) circle (0.3) node {\tiny{$U_5$}};
        \draw  (4,-2) circle (0.3) node {\tiny{$U_6$}};
        \draw  (6,-2) circle (0.3) node {\tiny{$U_7$}};

        \draw (-5.7,-3) -- (-4.3,-3);
        \draw (-1.7,-3) -- (-0.3,-3);
        \draw (4.3,-3) -- (5.7,-3);

        \draw (-6,-3) circle (0.3) node {\tiny{$U_1$}};
        \draw  (-4,-3) circle (0.3) node {\tiny{$U_2$}};
        \draw  (-2,-3) circle (0.3) node {\tiny{$U_3$}};
        \draw  (0,-3) circle (0.3) node {\tiny{$U_4$}};
        \draw  (2,-3) circle (0.3) node {\tiny{$U_5$}};
        \draw  (4,-3) circle (0.3) node {\tiny{$U_6$}};
        \draw  (6,-3) circle (0.3) node {\tiny{$U_7$}};

        \draw (-5.7,-4) -- (-4.3,-4);
        \draw (-1.7,-4) -- (-0.3,-4);

        \draw (-6,-4) circle (0.3) node {\tiny{$U_1$}};
        \draw  (-4,-4) circle (0.3) node {\tiny{$U_2$}};
        \draw  (-2,-4) circle (0.3) node {\tiny{$U_3$}};
        \draw  (0,-4) circle (0.3) node {\tiny{$U_4$}};
        \draw  (2,-4) circle (0.3) node {\tiny{$U_5$}};
        \draw  (4,-4) circle (0.3) node {\tiny{$U_6$}};
        \draw  (6,-4) circle (0.3) node {\tiny{$U_7$}};

        \draw (-5.7,-5) -- (-4.3,-5);

        \draw (-6,-5) circle (0.3) node {\tiny{$U_1$}};
        \draw  (-4,-5) circle (0.3) node {\tiny{$U_2$}};
        \draw  (-2,-5) circle (0.3) node {\tiny{$U_3$}};
        \draw  (0,-5) circle (0.3) node {\tiny{$U_4$}};
        \draw  (2,-5) circle (0.3) node {\tiny{$U_5$}};
        \draw  (4,-5) circle (0.3) node {\tiny{$U_6$}};
        \draw  (6,-5) circle (0.3) node {\tiny{$U_7$}};

        \draw (-6,-6) circle (0.3) node {\tiny{$U_1$}};
        \draw  (-4,-6) circle (0.3) node {\tiny{$U_2$}};
        \draw  (-2,-6) circle (0.3) node {\tiny{$U_3$}};
        \draw  (0,-6) circle (0.3) node {\tiny{$U_4$}};
        \draw  (2,-6) circle (0.3) node {\tiny{$U_5$}};
        \draw  (4,-6) circle (0.3) node {\tiny{$U_6$}};
        \draw  (6,-6) circle (0.3) node {\tiny{$U_7$}};
\end{tikzpicture}
\caption{Aldous' construction of the consistent family of uniform random compositions. In this example a consistent family of partitions of $n=7$ is: for $k=1$, $7=7$; for $k=2$, $7=4+3$; for $k=3$, $7=2+2+3$; for $k=4$, $7=2+2+1+2$; for $k=5$, $7=2+2+1+1+1$; for $k=6$, $7=2+1+1+1+1+1$ and, for $k=7$, $7=1+1+1+1+1+1+1$.}
\label{fig:aldous}
  \end{figure}

So far the labels $U_1,U_2,\ldots,U_n$ of vertices in Aldous' construction did not play a role but now we shall exploit them to construct a consistent family $(\Lah(n,k))_{n\in\N,1\leq k\leq n}$. Let $U_1,U_2,\ldots,U_n$ be a sample of independent uniformly distributed on $[0,1]$ random variables which is also independent of the above edge-removing process. For fixed $n\in\N$ and $k=1,\ldots,n$ take a composition $(b^{(n)}_1,b^{(n)}_2,\ldots,b^{(n)}_k)$ induced by the $k$-th row of Aldous' construction. We say that a vertex $U_i$ is a record with respect to the composition $(b^{(n)}_1,b^{(n)}_2,\ldots,b^{(n)}_k)$ if it is a record in the block it occupies, that is, it is larger than all previous elements inside this block counting from left to right. The main result of this section if given by the next proposition.

\begin{proposition}\label{prop:representation}
The total number $X_{n,k}$ of records with respect to a uniform random composition of $n$ into $k$ summands has the $\Lah(n,k)$ distribution.
\end{proposition}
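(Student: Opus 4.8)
The plan is to condition on the random composition, exploit the independence of records across distinct blocks, reduce to the already-understood $k=1$ case of \eqref{eq:records}, and finish with a generating-function identification. The only role of the labels $U_1,\ldots,U_n$ is through the relative order of the values falling in each block. Since $U_1,\ldots,U_n$ are i.i.d.\ and independent of the edge-removal process, conditioning on the composition does not alter their law, and the $U$-values occupying the $k$ disjoint blocks are independent. Hence, conditionally on $(b^{(n)}_1,\ldots,b^{(n)}_k)=(b_1,\ldots,b_k)$, the numbers of records $R_1,\ldots,R_k$ in the respective blocks are independent, and $R_i$ is distributed as the number of left-to-right maxima of a uniform random permutation of $b_i$ elements, so that $X_{n,k}=R_1+\cdots+R_k$.

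First I would record the per-block probability generating function. By the standard record-indicator decomposition (the same one that yields \eqref{eq:records} when $k=1$), the number of records in a uniform permutation of $b$ elements has the generating function
$$
\E\bigl[z^{R_i}\mid b_i=b\bigr]=\prod_{\ell=1}^{b}\frac{z+\ell-1}{\ell}=\frac{z(z+1)\cdots(z+b-1)}{b!}=\frac{1}{b!}\sum_{j}\stirling{b}{j}z^j,
$$
the last equality being the expansion of the rising factorial $(z)^{\uparrow}_b:=z(z+1)\cdots(z+b-1)$ into Stirling numbers of the first kind. Taking the product over the $k$ independent blocks and averaging over the uniform composition \eqref{eq:uniform_composition_distribution}, I obtain
$$
\E\bigl[z^{X_{n,k}}\bigr]=\frac{1}{\binom{n-1}{k-1}}\sum_{\substack{b_1+\cdots+b_k=n\\ b_i\geq 1}}\ \prod_{i=1}^{k}\frac{(z)^{\uparrow}_{b_i}}{b_i!}=\frac{1}{\binom{n-1}{k-1}}\,[x^n]\bigl((1-x)^{-z}-1\bigr)^{k},
$$
where I used $\sum_{b\geq 1}\frac{(z)^{\uparrow}_b}{b!}x^b=(1-x)^{-z}-1$ to pass from the convolution to the coefficient extraction.

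The heart of the argument, and the step I expect to require the most care, is to identify this coefficient with the double Stirling sum defining the Lah distribution. Writing $(1-x)^{-z}-1=\exp\bigl(z\log\tfrac{1}{1-x}\bigr)-1$ and setting $w=\log\tfrac{1}{1-x}$, I would apply both identities in \eqref{eq:stirling_gen_funct} in turn: the second identity expands $(\eee^{zw}-1)^k=k!\sum_{m\geq k}\stirlingsec{m}{k}\frac{z^m w^m}{m!}$ in powers of $w$, and the first identity expands each $w^m=m!\sum_{n\geq m}\stirling{n}{m}\frac{x^n}{n!}$ in powers of $x$. Combining these and extracting the coefficient of $x^n$ gives
$$
[x^n]\bigl((1-x)^{-z}-1\bigr)^{k}=\frac{k!}{n!}\sum_{m=k}^{n}\stirling{n}{m}\stirlingsec{m}{k}\,z^m.
$$

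Finally, substituting this expression and invoking the normalization $L(n,k)=\frac{n!}{k!}\binom{n-1}{k-1}$ from \eqref{eq:lah_numb_def}, the prefactor $\frac{1}{\binom{n-1}{k-1}}\cdot\frac{k!}{n!}$ collapses to $\frac{1}{L(n,k)}$, so that
$$
\E\bigl[z^{X_{n,k}}\bigr]=\frac{1}{L(n,k)}\sum_{m=k}^{n}\stirling{n}{m}\stirlingsec{m}{k}\,z^m,
$$
which is precisely the probability generating function of the law \eqref{eq:lah_distr_def}. Matching the coefficients of $z^j$ then yields $X_{n,k}\eqdistr\Lah(n,k)$, completing the proof. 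The conceptual core is the reduction to independent records in \eqref{eq:records}; the remaining obstacle is purely the bookkeeping in composing the two generating functions of \eqref{eq:stirling_gen_funct}, which is routine once the substitution $w=\log\tfrac{1}{1-x}$ is in place.
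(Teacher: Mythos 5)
Your proof is correct and follows essentially the same route as the paper's generating-function proof of Proposition~\ref{prop:representation}: conditioning on the composition, multiplying the per-block record generating functions $\frac{z(z+1)\cdots(z+b-1)}{b!}$ to get $[x^n]\left((1-x)^{-z}-1\right)^k/\binom{n-1}{k-1}$, and identifying the result via the double Stirling expansion, which is exactly the paper's Lemma~\ref{lem:stirling_product}. No gaps.
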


The easiest way to prove Proposition~\ref{prop:representation} is via generating functions; but we shall also give a combinatorial proof.
Recall that $[x^n]f(x)$ denotes the coefficient of $x^n$ in the Taylor or Laurent expansion of $f(x)$ around $0$. The following lemma will be  useful on many occasions.
\begin{lemma}\label{lem:stirling_product}
For all $n,k,j\in \N$ with $k\leq j\leq n$ we have
$$
\frac{k!}{n!}\stirling{n}{j} \stirlingsec{j}{k}  = [t^j] [x^n] \left((1-x)^{-t} - 1\right)^k.
$$
\end{lemma}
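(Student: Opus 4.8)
The plan is to interpret the right-hand side as a composition of formal power series and then read off the coefficients using the two exponential generating functions recorded in~\eqref{eq:stirling_gen_funct}. The key algebraic observation is that
\[
(1-x)^{-t} - 1 = \eee^{\,t\log\frac{1}{1-x}} - 1,
\]
so that, writing $u := \log\frac{1}{1-x} = x + \tfrac{x^2}{2} + \cdots$ (a power series in $x$ with zero constant term), the bracketed expression becomes $\eee^{tu}-1$. This turns the problem into an iterated coefficient extraction that peels off the two kinds of Stirling numbers one at a time.

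First I would apply the Stirling-second-kind generating function from~\eqref{eq:stirling_gen_funct} with the formal variable there replaced by $tu$. Since $tu$ has no constant term, this substitution is a legitimate composition of formal power series and yields
\[
\left((1-x)^{-t} - 1\right)^k = (\eee^{tu}-1)^k = k! \sum_{j=k}^{\infty} \stirlingsec{j}{k} \frac{t^j u^j}{j!}.
\]
Extracting the coefficient of $t^j$ (which is itself a genuine power series in $x$, because each $u^j$ is) gives
\[
[t^j]\left((1-x)^{-t} - 1\right)^k = \frac{k!}{j!}\, \stirlingsec{j}{k}\, \left(\log\tfrac{1}{1-x}\right)^j.
\]
Next I would invoke the Stirling-first-kind generating function from~\eqref{eq:stirling_gen_funct} in the form $\left(\log\tfrac{1}{1-x}\right)^j = j! \sum_{n=j}^\infty \stirling{n}{j}\, x^n/n!$, whence $[x^n]\left(\log\tfrac{1}{1-x}\right)^j = (j!/n!)\stirling{n}{j}$. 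Combining the two extractions produces
\[
[t^j][x^n]\left((1-x)^{-t} - 1\right)^k = \frac{k!}{j!}\stirlingsec{j}{k}\cdot \frac{j!}{n!}\stirling{n}{j} = \frac{k!}{n!}\stirling{n}{j}\stirlingsec{j}{k},
\]
which is exactly the asserted identity. (For $j<k$ or $j>n$ both sides vanish, consistently with the conventions on Stirling numbers, so the range $k\le j\le n$ is the only nontrivial one.)

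The single point that genuinely needs care, and the one I expect to be the main (though mild) obstacle, is justifying the bivariate formal-power-series bookkeeping: namely that substituting $tu$ into $(\eee^z-1)^k$ is valid and that one may extract the $t^j$-coefficient as a bona fide element of $\Q[[x]]$ before applying $[x^n]$. This is safe precisely because $u$ has no constant term, so $(\eee^{tu}-1)^k$ lies in $\Q[[x]][[t]]$ with its $t^j$-coefficient a power series in $x$ beginning at order $x^j$; consequently no convergence issues or reordering-of-summation subtleties arise, and the iterated extraction $[t^j][x^n]$ is unambiguous.
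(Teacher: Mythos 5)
Your proof is correct and follows essentially the same route as the paper: rewrite $(1-x)^{-t}-1$ as $\eee^{t\log\frac{1}{1-x}}-1$, expand via the second-kind Stirling generating function in the variable $t\log\frac{1}{1-x}$, then expand the resulting powers of $\log\frac{1}{1-x}$ via the first-kind generating function and equate coefficients. The extra remarks on the formal-power-series bookkeeping are a harmless (and reasonable) elaboration of what the paper leaves implicit.
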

\begin{proof}
Using both identities in~\eqref{eq:stirling_gen_funct} we have
$$
\frac{\left((1-x)^{-t} - 1\right)^k}{k!}
=
\frac{\left(\eee^{t\log \frac 1 {1-x}} - 1\right)^k}{k!}
=
\sum_{j=k}^\infty
\left(\log \frac 1 {1-x}\right)^j  \stirlingsec{j}{k} \frac{t^j}{j!}
=
\sum_{j=k}^\infty \sum_{n=j}^\infty \frac{1}{n!}\stirling{n}{j} \stirlingsec{j}{k} t^j x^n.
$$
The claim follows by equating the coefficients.
\end{proof}

\begin{proof}[Proof of Proposition~\ref{prop:representation} using generating functions]
For $t\in \R$ and $n\in\N$, let $\phi_n(t)$ be the generating function of the number of records in a sample of size $n$, that is,
$$
\phi_n(t)=\E t^{\Lah(n,1)}=\sum_{j=1}^{n}\frac{1}{n!}\stirling{n}{j}t^j=\frac{t(t+1)\cdots(t+n-1)}{n!},
$$
see~\eqref{eq:records}. Conditioning on the event $(b^{(n)}_1,b^{(n)}_2,\ldots,b^{(n)}_k)=(i_1,i_2,\ldots,i_k)$ we obtain, for $k\in\N$, $|x|<1$ and $t\in \R$,
\begin{align*}
\sum_{n=k}^{\infty}\binom{n-1}{k-1}\E t^{X_{n,k}}x^n=\sum_{n=k}^{\infty}\sum_{\substack{i_1+\cdots+i_k=n\\ i_1,\ldots,i_k\geq 1}} \prod_{\ell=1}^{k} (\phi_{i_\ell}(t)x^{i_\ell})=\left(\sum_{i=1}^{\infty}\phi_i(t)x^i\right)^k=((1-x)^{-t}-1)^k=\sum_{n=k}^{\infty}\binom{n-1}{k-1}\E t^{\Lah(n,k)}x^n,
\end{align*}
where the last equality follows from Lemma~\ref{lem:stirling_product} and equations~\eqref{eq:lah_distr_def} and~\eqref{eq:lah_numb_def}.
\end{proof}

\begin{proof}[Combinatorial proof of Proposition~\ref{prop:representation}]
Fix $n\in \N$ and $k\in \{1,\ldots, n\}$. Consider the set $\mathbf L_{n,k}$ of all pairs $(\sigma, \pi)$, where $\pi= (A_1,\ldots,A_k)$ is an ordered partition of the set $\{1,\ldots,n\}$ into $k$ non-empty blocks (and the order in which the blocks appear is essential), while $\sigma:\{1,\ldots,n\}\to\{1,\ldots,n\}$ is a permutation preserving	 $\pi$ meaning that $\sigma$ only permutes the elements inside the blocks of $\pi$ but not between the blocks. The total number of pairs $(\sigma, \pi)$ in which $\sigma$ has exactly $j$ cycles is given by $\stirling{n}{j} k! \stirlingsec{j}{k}$, which follows from the definition of the Stirling numbers of both kinds. The total number of pairs $(\sigma, \pi)$ in $\mathbf L_{n,k}$ is $k! L(n,k)$, which either follows from~\eqref{eq:lah_numb_def}, or by recalling that the Lah number $L(n,k)$ counts the number of partitions of $\{1,\ldots,n\}$ into $k$ blocks and putting linear order on the elements of each block. (The one-line notation of the restriction of $\sigma$ to each block corresponds to a linear order on that block). Now, let $(\Sigma, \Pi)$ be random and uniformly distributed on the finite set $\mathbf L_{n,k}$. As we argued above, the number of cycles of $\Sigma$ has the Lah distribution $\Lah (n,k)$. On the other hand, let us take some deterministic ordered partition $\pi= (I_1,\ldots, I_k)$ of $\{1,\ldots,n\}$ into $k$ blocks. The number of permutations $\sigma$ preserving $\pi$ is $|I_1|!\ldots |I_k|!$. The number of ordered partitions of $\{1,\ldots,n\}$ with prescribed block sizes $i_1=|I_1|,\ldots,i_k=|I_k|$ is given by $n!/i_1!\ldots i_k!$. Hence, the block sizes of $\Pi$ form a random, uniform  composition of $n$ in $k$ summands. Conditionally on $\Pi$, the restrictions of $\Sigma$ to these blocks are independent and uniform random permutations of the elements inside the blocks. Recall now that the number of cycles of a uniform random permutation of $i_j$ elements has the same distribution as the number of records in a uniform sample of size $i_j$. Hence, the number of cycles of $\Sigma$ has the same distribution as $X_{n,k}$, and the proof is complete.
\end{proof}

In the sequel we shall frequently use the following representation of the Lah distribution which is an immediate consequence of Proposition~\ref{prop:representation}.

\begin{proposition}
Let $(b^{(n)}_1,b^{(n)}_2,\ldots,b^{(n)}_k)$ denote the uniform random composition of $n$ into $k$ parts, that is, a random composition with distribution~\eqref{eq:uniform_composition_distribution}. Moreover, let $(Z^{(j)}_{i})_{i,j\in\N}$ be an array of mutually independent (and independent of $(b^{(n)}_1,b^{(n)}_2,\ldots,b^{(n)}_k)$) random variables such that $Z^{(j)}_{n}\overset{{\rm d}}{=}\Lah(n,1)$, that is, has distribution~\eqref{eq:records}, for $n,j\in\N$. Then,
\begin{equation}\label{eq:representation_as_a_sum_over_blocks_of_composition}
\Lah(n,k)\overset{{\rm d}}{=}\sum_{j=1}^{k}Z^{(j)}_{b^{(n)}_j},
\end{equation}
where $\overset{{\rm d}}{=}$ denotes equality in distribution.
\end{proposition}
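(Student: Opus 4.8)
The final proposition asserts that the Lah distribution $\Lah(n,k)$ can be represented as a sum of independent records-distributed random variables indexed by the blocks of a uniform random composition. The plan is to read off this statement directly from the combinatorial content of Proposition~\ref{prop:representation}, since the two assertions are essentially the same fact packaged differently. The proof should therefore be short: it is an ``immediate consequence'' as the text promises, and the main task is to verify that conditioning on the composition factorizes the records count into independent per-block contributions.

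First I would invoke Proposition~\ref{prop:representation}, which tells us that $\Lah(n,k)$ equals in distribution the total number $X_{n,k}$ of records with respect to a uniform random composition $(b^{(n)}_1,\ldots,b^{(n)}_k)$ of $n$ into $k$ summands. Here a record is counted \emph{within the block it occupies}, reading left to right. The key structural observation is that the record count is additive over blocks: if a block has size $b^{(n)}_j$, the number of records inside that block depends only on the relative order of the i.i.d.\ uniform labels $U_i$ falling in that block. Since the labels $U_1,\ldots,U_n$ are i.i.d.\ and independent of the composition, the relative orders within distinct blocks are mutually independent, and within a block of size $m$ the number of records has exactly the $\Lah(m,1)$ distribution given by~\eqref{eq:records} (this is the classical fact that the number of records in a uniform i.i.d.\ sample of size $m$ has the distribution $\frac{1}{m!}\stirling{m}{j}$).

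Thus I would condition on the event $(b^{(n)}_1,\ldots,b^{(n)}_k)=(i_1,\ldots,i_k)$. Conditionally, $X_{n,k}$ splits as a sum $\sum_{j=1}^k R_j$, where $R_j$ is the number of within-block records in block $j$; the $R_j$ are conditionally independent with $R_j\eqdistr\Lah(i_j,1)$. Introducing the independent array $(Z^{(j)}_i)_{i,j\in\N}$ with $Z^{(j)}_n\eqdistr\Lah(n,1)$, the conditional law of $X_{n,k}$ given the composition matches that of $\sum_{j=1}^k Z^{(j)}_{i_j}$. Averaging over the (identical) law of the composition on both sides yields the unconditional distributional identity $\Lah(n,k)\eqdistr\sum_{j=1}^k Z^{(j)}_{b^{(n)}_j}$, which is~\eqref{eq:representation_as_a_sum_over_blocks_of_composition}.

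I expect no serious obstacle here; the only point requiring a moment's care is the conditional independence of the within-block record counts. This follows because, given the block sizes, the collection of rank-orderings of the labels inside the respective blocks is a uniformly random element of a product of symmetric groups $\mathfrak{S}_{i_1}\times\cdots\times\mathfrak{S}_{i_k}$, so the orderings in different blocks are genuinely independent and each is uniform. One should also note that the indexing by a superscript $j$ in $Z^{(j)}_i$ is merely bookkeeping to keep the blocks' record variables distinct and independent; the distribution of $Z^{(j)}_i$ does not depend on $j$, so only the sizes $b^{(n)}_j$ matter, consistent with the exchangeability used above.
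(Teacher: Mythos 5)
Your proof is correct and follows essentially the same route as the paper, which states this proposition as an immediate consequence of Proposition~\ref{prop:representation}: the conditional independence of the within-block record counts given the composition is exactly the factorization already used in both of the paper's proofs of that proposition (explicitly in the generating-function computation and in the sentence "Conditionally on $\Pi$, the restrictions of $\Sigma$ to these blocks are independent and uniform random permutations"). Nothing is missing.
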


\subsection{P\'olya urn coupling}\label{subsec:polya}
The coupling $(X_{n,k})_{n\in\N,1\leq k\leq n}$ of the Lah distributions constructed above has the property that by its very definition  $X_{n,k}\leq X_{n,k+1}$ a.s. However, the monotonicity in $n$, i.e.\ the inequality $X_{n,k}\leq X_{n+1,k}$, may fail in general. It turns out that, for every fixed $k\in \N$, there is another coupling of the sequence $\Lah (n,k)$, $n\in \{k,k+1,\ldots\}$,  which is non-decreasing in $n$.  To construct it, let $U_1,U_2,\ldots$ be independent random variables with the uniform distribution on $[0,1]$. Consider an urn containing $k$ balls of $k$ different colors and carrying labels $U_1,\ldots,U_k$.  Suppose that, at some step, there are $n-1$ balls in the urn.  Draw one ball from the urn uniformly at random and return it to the urn together with one more ball which has the same color and carries label $U_n$, and proceed in this way. We say that $U_n$ is a local record if $U_n$ is larger than the labels of all balls which were already in the urn and had the same color as the ball with the label $U_n$. Let $b_{j}^{(n)}$ be the number of balls of color $j$ when the total number of balls in the urn is $n$,  and let $Y_{n,k}$ be the number of local records at this time. Then,  $(b_1^{(n)},\ldots, b_{k}^{(n)})$ has the same distribution as the uniform random composition; see~\cite[Chapter~40]{johnson_kotz_balakrishnan_book}.  Consequently, $Y_{n,k}$ has the Lah distribution $\Lah (n,k)$.
Observe that by construction, we have $Y_{n,k}\leq Y_{n+1,k}$ for all $n\geq k$.


\subsection{Stochastic monotonicity}
From the above constructions we immediately obtain the following result on stochastic monotonicity. It seems to be a non-trivial task to deduce it from from the definition of the Lah distribution given in~\eqref{eq:lah_distr_def} alone.

\begin{proposition}\label{prop:stoch_monotonicity}
The Lah distributions $\Lah(n,k)$ satisfy the following stochastic monotonicity properties: for $n\in\N$ and $k\in\{1,2,\ldots,n-1\}$ we have
\begin{equation}\label{eq:Lah_monotone_k}
\Lah(n,k)\overset{{\rm d}}{\leq}\Lah(n,k+1),
\end{equation}
and, for $n\in\N$ and $k\in\{1,2,\ldots,n\}$,
\begin{equation}\label{eq:Lah_monotone_n}
\Lah(n,k)\overset{{\rm d}}{\leq}\Lah(n+1,k),
\end{equation}
where for two real-valued random variables $X,Y$ we write $X\overset{{\rm d}}{\leq}Y$ iff $\P[X\leq t]\geq \P[Y\leq t]$ for all $t\in\R$.
\end{proposition}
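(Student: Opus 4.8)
The plan is to deduce both stochastic monotonicity statements from the two explicit couplings constructed in Sections~\ref{subsec:compositions_aldous_coupling} and~\ref{subsec:polya}, using the elementary principle that almost sure dominance implies stochastic dominance. Concretely, I would first record the observation that if $X\leq Y$ almost surely, then $\{Y\leq t\}\subseteq\{X\leq t\}$ for every $t\in\R$, whence $\P[X\leq t]\geq\P[Y\leq t]$, that is, $X\overset{{\rm d}}{\leq}Y$. Since each coupling realizes the correct Lah marginals (by Proposition~\ref{prop:representation} for the record construction, and by the P\'olya urn identification for the urn construction), it then suffices to exhibit the pointwise inequalities.

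For the monotonicity in $k$, equation~\eqref{eq:Lah_monotone_k}, I would use the Aldous record coupling $(X_{n,k})_{1\leq k\leq n}$. Here one passes from the $k$-th to the $(k+1)$-st row by removing a single additional edge, which splits exactly one block $B$ of the composition into a left part $B_1$ and a right part $B_2$. The key point to verify is that this refinement can only create records, never destroy them: an element of $B_1$ has the same set of within-block predecessors before and after the split, so its record status is unchanged; an element of $B_2$ sees only a subset of its former predecessors, so any former record remains a record, and in addition the leftmost element of $B_2$ becomes a record automatically. Consequently $X_{n,k}\leq X_{n,k+1}$ almost surely, and the implication above yields~\eqref{eq:Lah_monotone_k}.

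For the monotonicity in $n$, equation~\eqref{eq:Lah_monotone_n}, I would instead invoke the P\'olya urn coupling $(Y_{n,k})_{n\geq k}$ for fixed $k$. By its very construction a ball is added at every step and the local-record count is a running total that is non-decreasing in the number of balls, so $Y_{n,k}\leq Y_{n+1,k}$ almost surely; since $Y_{n,k}\overset{{\rm d}}{=}\Lah(n,k)$ for each $n$, the same implication gives~\eqref{eq:Lah_monotone_n}.

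The proof carries essentially no analytic obstacle, since both couplings have already been built; the only point requiring genuine care is the combinatorial claim that refining a composition by one block cannot decrease the number of records, which is the splitting argument described in the second paragraph. I would emphasize, as the remark preceding the proposition already notes, that obtaining these inequalities directly from the defining formula~\eqref{eq:lah_distr_def} — which would require controlling the sign of the differences of $\stirling{n}{j}\stirlingsec{j}{k}/L(n,k)$ across $k$ or across $n$ — appears considerably harder, which is precisely why the coupling route is the natural one.
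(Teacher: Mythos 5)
Your proposal is correct and follows essentially the same route as the paper: the paper's proof likewise deduces \eqref{eq:Lah_monotone_k} from the a.s.\ inequality $X_{n,k}\leq X_{n,k+1}$ in the Aldous record coupling and \eqref{eq:Lah_monotone_n} from $Y_{n,k}\leq Y_{n+1,k}$ in the P\'olya urn coupling, invoking the standard fact that almost sure dominance implies stochastic dominance. Your explicit block-splitting argument (records in the left part are unaffected, records in the right part see fewer predecessors and so persist) is exactly the verification the paper leaves implicit with ``both inequalities follow directly from the definitions.''
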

\begin{proof}
The relations follow from the fact that the couplings constructed in Sections~\ref{subsec:compositions_aldous_coupling} and~\ref{subsec:polya} satisfy $X_{n,k}\leq X_{n,k+1}$ (in the Aldous coupling) and $Y_{n,k}\leq Y_{n+1,k}$ (in the P\'olya urn coupling), a.s. Both inequalities follow directly from the definitions of the corresponding couplings.
\end{proof}

\begin{corollary}
For every $n\in \N$, the expectation of $\Lah(n,k)$ is a nondecreasing function of $k\in \{1,\ldots,n\}$. For every $k\in \N$, the expectation of $\Lah(n,k)$ is a nondecreasing function of $n\in \{k,k+1,\ldots\}$.
\end{corollary}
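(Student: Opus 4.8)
The plan is to read the corollary off directly from Proposition~\ref{prop:stoch_monotonicity}, using the elementary principle that stochastic dominance implies ordering of expectations: if $X\overset{{\rm d}}{\leq}Y$ in the sense defined there, then $\E X\leq \E Y$. First I would record this principle in the form adapted to our setting. Since every $\Lah(n,k)$ is supported on the finite set $\{k,k+1,\ldots,n\}\subset\{0,1,2,\ldots\}$, I would use the tail-sum representation
$$
\E X=\sum_{t=0}^{\infty}\P[X>t]=\sum_{t=0}^{\infty}\bigl(1-\P[X\leq t]\bigr),
$$
valid for any nonnegative integer-valued $X$. If $X\overset{{\rm d}}{\leq}Y$, then $\P[X\leq t]\geq\P[Y\leq t]$ for every $t$, so the summands for $Y$ dominate those for $X$ termwise, whence $\E X\leq \E Y$. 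The finiteness of support guarantees that both series are finite sums, so no convergence issues arise.

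With this principle in hand, the two assertions follow at once. Applying it to the relation $\Lah(n,k)\overset{{\rm d}}{\leq}\Lah(n,k+1)$ from~\eqref{eq:Lah_monotone_k} gives $\E\Lah(n,k)\leq\E\Lah(n,k+1)$ for each fixed $n$, which is the monotonicity in $k$; applying it to~\eqref{eq:Lah_monotone_n} gives $\E\Lah(n,k)\leq\E\Lah(n+1,k)$ for each fixed $k$, which is the monotonicity in $n$.

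Alternatively, and perhaps even more transparently, I could bypass the abstract principle and invoke the explicit couplings directly. In the Aldous coupling of Section~\ref{subsec:compositions_aldous_coupling} one has $X_{n,k}\leq X_{n,k+1}$ pointwise, and in the P\'olya urn coupling of Section~\ref{subsec:polya} one has $Y_{n,k}\leq Y_{n+1,k}$ pointwise; taking expectations of these almost-sure inequalities and using that $X_{n,k},X_{n,k+1}$ (respectively $Y_{n,k},Y_{n+1,k}$) carry the claimed Lah distributions yields the two monotonicity statements immediately. There is no genuine obstacle here: the corollary is a purely formal consequence of the monotonicity already established, and the only point deserving a word of care is the passage from stochastic dominance to the mean, which is settled by the tail-sum identity displayed above.
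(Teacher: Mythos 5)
Your proof is correct and matches the paper's (implicit) reasoning: the corollary is stated without proof as an immediate consequence of Proposition~\ref{prop:stoch_monotonicity}, and your passage from stochastic dominance to expectations via the tail-sum identity (or, equivalently, via taking expectations of the pointwise coupling inequalities) is exactly the routine step the authors leave to the reader.
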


\section{Basic properties of the Lah distribution}\label{sec:distr_properties}

We start by providing a representation for the generating function of a Lah-distributed random variable $\Lah(n,k)$ which is defined by
\begin{equation}\label{eq:P_n_k_def}
P_{n,k}(t) := \E t^{\Lah(n,k)} = \frac 1 {L(n,k)} \sum_{j=k}^n t^j \stirling{n}{j}\stirlingsec{j}{k},
\qquad t\in\C.
\end{equation}
\begin{lemma}\label{lem:gener_funct_formula}
For all $n\in \N$, $k\in \{1,\ldots,n\}$ and $t\in\C$ we have
\begin{align}
P_{n,k}(t)
&=
\frac {1}{\binom {n-1}{k-1}} [x^n] \left((1-x)^{-t} - 1\right)^k \label{eq:P_n_k_coeff}\\
&=
\frac {1}{\binom {n-1}{k-1}}  \sum_{m=1}^k (-1)^{k-m} \binom k m  \frac{\Gamma(tm + n)}{\Gamma(tm) n!}
.
\label{eq:gener_funct_formula}
\end{align}
\end{lemma}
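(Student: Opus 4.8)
The plan is to read both identities off directly from Lemma~\ref{lem:stirling_product} combined with the explicit value of the Lah number in~\eqref{eq:lah_numb_def}; essentially no new idea is needed beyond careful coefficient extraction. The one preliminary point I would settle is the dual role of $t$: in Lemma~\ref{lem:stirling_product} it is a formal variable, whereas in $P_{n,k}(t)$ it is a genuine complex number. I would reconcile this by fixing $t\in\C$ and expanding via the generalized binomial theorem, $(1-x)^{-t}=\sum_{n\geq 0}\frac{t(t+1)\cdots(t+n-1)}{n!}x^n$, which makes clear that each coefficient $[x^n]\bigl((1-x)^{-t}-1\bigr)^k$ is a \emph{polynomial} in $t$. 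Consequently any coefficient identity valid for formal $t$ holds verbatim for every complex $t$.

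To prove~\eqref{eq:P_n_k_coeff}, I would sum the identity of Lemma~\ref{lem:stirling_product} over $j$, the summation range being automatically truncated to $k\le j\le n$ by the vanishing of $\stirling{n}{j}$ for $j>n$ and of $\stirlingsec{j}{k}$ for $j<k$. This yields
$$
[x^n]\bigl((1-x)^{-t}-1\bigr)^k=\frac{k!}{n!}\sum_{j=k}^n \stirling{n}{j}\stirlingsec{j}{k}\,t^j.
$$
Comparing with the definition~\eqref{eq:P_n_k_def} of $P_{n,k}(t)$ and inserting $L(n,k)=\tfrac{n!}{k!}\binom{n-1}{k-1}$ from~\eqref{eq:lah_numb_def}, the prefactor $\tfrac{n!}{k!\,L(n,k)}$ collapses to $1/\binom{n-1}{k-1}$, which is exactly~\eqref{eq:P_n_k_coeff}.

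For~\eqref{eq:gener_funct_formula} I would instead expand $\bigl((1-x)^{-t}-1\bigr)^k$ by the ordinary binomial theorem as $\sum_{m=0}^k\binom{k}{m}(-1)^{k-m}(1-x)^{-tm}$ and extract the coefficient of $x^n$ term by term, using
$$
[x^n](1-x)^{-tm}=\frac{tm(tm+1)\cdots(tm+n-1)}{n!}=\frac{\Gamma(tm+n)}{\Gamma(tm)\,n!}.
$$
Since $[x^n]1=0$ for $n\ge 1$, the $m=0$ summand drops out and only $m=1,\dots,k$ remain; multiplying by $1/\binom{n-1}{k-1}$ and invoking~\eqref{eq:P_n_k_coeff} gives~\eqref{eq:gener_funct_formula}.

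The computation is largely bookkeeping, so the only step needing genuine care is the interpretation of the quotient $\Gamma(tm+n)/\Gamma(tm)$ when $tm$ is a non-positive integer, where numerator and denominator both have poles. I would stress that this ratio is to be understood as the rising factorial $\prod_{i=0}^{n-1}(tm+i)$, an entire function of $t$ that agrees with the Gamma-quotient off the exceptional set; this is precisely what guarantees that both sides of~\eqref{eq:gener_funct_formula} are polynomials in $t$ and that the identity is valid for every $t\in\C$, consistently with the analytic reading of $P_{n,k}(t)$ fixed at the outset.
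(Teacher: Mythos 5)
Your proposal is correct and follows essentially the same route as the paper: identity~\eqref{eq:P_n_k_coeff} by summing Lemma~\ref{lem:stirling_product} over $j$ and dividing by $\binom{n-1}{k-1}$ via~\eqref{eq:lah_numb_def}, and identity~\eqref{eq:gener_funct_formula} by the binomial expansion, dropping the $m=0$ term, and the Taylor series of $(1-x)^{-tm}$. Your added remarks on polynomiality in $t$ and on reading $\Gamma(tm+n)/\Gamma(tm)$ as the rising factorial are sensible clarifications that the paper leaves implicit.
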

\begin{proof}
To prove~\eqref{eq:P_n_k_coeff}, observe that by Lemma~\ref{lem:stirling_product},
$$
\frac{k!}{n!} \sum_{j=k}^n t^j \stirling{n}{j} \stirlingsec{j}{k}  =  [x^n] \left((1-x)^{-t} - 1\right)^k.
$$
Division by $\binom{n-1}{k-1}$ gives $\E t^{\Lah(n,k)}$. It remains to prove~\eqref{eq:gener_funct_formula}.
Using the binomial formula and~\eqref{eq:P_n_k_coeff}, we obtain
$$
P_{n,k}(t)
=
\frac {1}{\binom {n-1}{k-1}} [x^n] \sum_{m=0}^k (-1)^{k-m} \binom k m (1-x)^{-tm}
=
\frac {1}{\binom {n-1}{k-1}} \sum_{m=1}^k (-1)^{k-m} \binom k m [x^n] (1-x)^{-tm}
.
$$
We dropped the term  with $m=0$ since it vanishes. To complete the proof, recall the Taylor series
$$
(1-x)^{-tm} = \sum_{n=0}^\infty  \frac{tm (tm+1)(tm+2) \ldots (tm + n-1)}{n!} x^n
=
\sum_{n=0}^\infty    \frac{\Gamma(tm + n)}{\Gamma(tm) n!} x^n.
$$
\end{proof}

\subsection{Expectation and factorial moments of the Lah distribution}\label{sec:moments}
\subsubsection{Exact formulas for the expectation}
We are going to state exact formulas for the factorial moments of the Lah distribution or, more precisely, for expressions differing from the factorial moments by a missing normalizing factor of $1/L(n,k)$. We begin with the expectation.
\begin{theorem}[Expectation]\label{theo:lah_expect}
For all $n,k\in \N$ with $n\geq k$ we have
\begin{align}
\sum_{j=k}^n j \stirling{n}{j} \stirlingsec{j}{k}
&=
\frac{n!}{(k-1)!} [x^{n-k+1}] ((1+x)^n \log (1+x))\label{eq:expect_coeff1}\\
&=
(-1)^{n-k} \frac{n!}{(k-1)!} [x^{n-k+1}] ((1+x)^{-k} \log (1+x)).\label{eq:expect_coeff2}
\end{align}
Equivalently, expanding $\log(1+x)$ and the other terms in Taylor series  and multiplying out, we have
\begin{equation}\label{eq:expect_lah_binomial_sum}
\sum_{j=k}^n j \stirling{n}{j} \stirlingsec{j}{k}
=
\frac{n!}{(k-1)!} \sum_{i=1}^{n-k+1} \frac{1}{i} \binom {n-i}{k-1}
=
\frac{n!}{(k-1)!} \sum_{i=1}^{n-k+1} \frac{(-1)^{i+1}}{i} \binom {n}{k+i-1}.
\end{equation}
\end{theorem}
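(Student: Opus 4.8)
The plan is to read the expectation-type sum off the generating function of Lemma~\ref{lem:stirling_product} by a single differentiation in the variable $t$. Summing the identity of Lemma~\ref{lem:stirling_product} against $t^j$ over $j$ gives the polynomial identity (in $t$)
\[
\sum_{j=k}^n \frac{k!}{n!}\stirling{n}{j}\stirlingsec{j}{k}\,t^j = [x^n]\bigl((1-x)^{-t}-1\bigr)^k .
\]
Differentiating both sides in $t$ and evaluating at $t=1$ is the crux of the first step. Using $\frac{\dd}{\dd t}(1-x)^{-t}=-\log(1-x)\,(1-x)^{-t}$ together with $(1-x)^{-1}-1=x/(1-x)$, the derivative of the right-hand side at $t=1$ collapses to $k\,x^{k-1}\bigl(-\log(1-x)\bigr)(1-x)^{-k}$, whence
\[
\sum_{j=k}^n j\,\stirling{n}{j}\stirlingsec{j}{k}=\frac{n!}{(k-1)!}\,[x^{n-k+1}]\,\frac{-\log(1-x)}{(1-x)^{k}} .
\]
This is the form closest to the definition, and the two coefficient formulas in the statement will be extracted from it.

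Next I would obtain~\eqref{eq:expect_coeff2} by the reflection $x\mapsto -x$. Replacing $x$ by $-x$ turns $\frac{-\log(1-x)}{(1-x)^k}$ into $-(1+x)^{-k}\log(1+x)$ and multiplies the coefficient of $x^{m}$ by $(-1)^m$; taking $m=n-k+1$ produces exactly the sign $(-1)^{n-k}$ appearing in~\eqref{eq:expect_coeff2}. This step is pure sign bookkeeping.

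The main obstacle is~\eqref{eq:expect_coeff1}, because passing from $(1+x)^{-k}$ to $(1+x)^{n}$ is \emph{not} a reflection: the exponent $n$, the normalizing factor, and the coefficient index $n-k+1$ are all coupled. I would handle it by writing $[x^{n-k+1}]\bigl((1+x)^n\log(1+x)\bigr)$ as a Cauchy coefficient integral and performing the Möbius substitution $x=y/(1-y)$ (equivalently $y=x/(1+x)$), under which $1+x=(1-y)^{-1}$, $\log(1+x)=-\log(1-y)$, $\dd x=(1-y)^{-2}\dd y$, and $x^{-(n-k+2)}=\bigl((1-y)/y\bigr)^{n-k+2}$. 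Collecting the powers of $(1-y)$ gives the net exponent $-n+(n-k+2)-2=-k$, so the integrand becomes $\frac{-\log(1-y)}{(1-y)^{k}}\,y^{-(n-k+2)}$; that is, the substitution transports $[x^{n-k+1}]\bigl((1+x)^n\log(1+x)\bigr)$ precisely onto $[y^{n-k+1}]\frac{-\log(1-y)}{(1-y)^{k}}$, matching the first step. The only delicate point is the careful bookkeeping of the $(1-y)$-exponents, which is where a sign or off-by-one slip is most likely.

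Finally, the two explicit binomial sums in~\eqref{eq:expect_lah_binomial_sum} follow by routine multiplication of Taylor series: expanding $(1-x)^{-k}$ (respectively $(1+x)^n$) against $\mp\log(1\mp x)$ and reading off the coefficient of $x^{n-k+1}$ yields $\sum_{i=1}^{n-k+1}\frac1i\binom{n-i}{k-1}$ from the form of the first step (using $\binom{n-i}{k-1}=\binom{n-i}{n-k+1-i}$) and $\sum_{i=1}^{n-k+1}\frac{(-1)^{i+1}}{i}\binom{n}{k+i-1}$ from~\eqref{eq:expect_coeff1} (using $\binom{n}{n-k+1-i}=\binom{n}{k+i-1}$). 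The equality of these two sums is then automatic, since all the coefficient forms have been shown to represent the same quantity.
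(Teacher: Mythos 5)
Your proposal is correct and follows essentially the same route as the paper: differentiate the generating-function identity $\frac{k!}{n!}\sum_j \stirling{n}{j}\stirlingsec{j}{k}t^j=[x^n]((1-x)^{-t}-1)^k$ at $t=1$, obtain the $(1-x)^{-k}\log(1-x)$ coefficient form, pass to $(1+x)^{-k}\log(1+x)$ by the reflection $x\mapsto-x$, and derive the $(1+x)^n$ form via a Cauchy integral with a M\"obius substitution (the paper uses $1+x=\frac{1}{1+y}$ where you use $1+x=\frac{1}{1-y}$, an immaterial difference of direction), finishing with the same series expansions for~\eqref{eq:expect_lah_binomial_sum}.
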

\begin{proof}
The starting point of the proof is the formula
$$
\frac{k!}{n!} \sum_{j=k}^n \stirling{n}{j} \stirlingsec{j}{k} t^j = [x^n] \left((1-x)^{-t} - 1\right)^k
$$
which follows from Lemma~\ref{lem:gener_funct_formula}. Since the function $((1-x)^{-t}-1)^k$ is analytic in $(x,t)$ if $(x,t)$ stays in a sufficiently small neighborhood of the point $(0,1)$, we may differentiate it any number of times in $x$ and $t$ and interchange the order of derivatives.
Differentiating the above formula in $t$ and putting $t=1$, we obtain
$$
\frac{k!}{n!} \sum_{j=k}^n  j \stirling{n}{j} \stirlingsec{j}{k}
=
- k  [x^n]\left( \left(\frac{x}{1-x}\right)^k \frac{\log (1-x)}{x}\right)
=
- k  [x^{n-k+1}] ((1-x)^{-k} \log (1-x)).
$$
Changing $x$ to $-x$, we obtain~\eqref{eq:expect_coeff1}. To prove~\eqref{eq:expect_coeff2}, we rewrite~\eqref{eq:expect_coeff1} using the Cauchy formula as
\begin{equation}\label{eq:oint_1}
\sum_{j=k}^n  j \stirling{n}{j} \stirlingsec{j}{k}
=
(-1)^{n-k} \frac{n!}{(k-1)!} \frac 1 {2\pi \ii} \oint_{\gamma} \frac{\log (1+x)}{(1+x)^k} \frac{\dd x}{x^{n-k+2}},
\end{equation}
where the integration contour $\gamma$ is a small counterclockwise circle centered at zero. Making the substitution $1+x = \frac 1 {1+y}$, we get
\begin{equation}\label{eq:oint_2}
\sum_{j=k}^n  j \stirling{n}{j} \stirlingsec{j}{k}
=
 \frac{n!}{(k-1)!} \frac 1 {2\pi \ii} \oint_{\gamma'} \frac{(1+y)^n\log (1+y)}{(1+y)^{n-k+2}} \dd y
\end{equation}
for some small counterclockwise contour $\gamma'$ around $0$.
Using the Cauchy formula one more time, we arrive at~\eqref{eq:expect_coeff2}.
\end{proof}

\begin{remark}
Alternatively, the first equality in~\eqref{eq:expect_lah_binomial_sum} can be derived from~\eqref{eq:representation_as_a_sum_over_blocks_of_composition} as follows.  First, note that for all $i=1,\ldots,k$ and all $j=1,\ldots,n-k+1$,
\begin{equation}\label{eq:b_1_n_distribution}
\P[b_i^{(n)}=j]=\P[b_1^{(n)}=j]=\frac{\binom{n-j-1}{k-2}}{\binom{n-1}{k-1}}.
\end{equation}
Thus, from~\eqref{eq:representation_as_a_sum_over_blocks_of_composition}, and with $H_k$ denoting the $k$-th harmonic number, we have
\begin{align*}
L(n,k)\E\Lah(n,k)&=\frac{n!}{k!}\binom{n-1}{k-1}\E\left(\E\left(\sum_{i=1}^{k}Z^{(i)}_{b_i^{(n)}}\Big|(b_1^{(n)},b_2^{(n)},\ldots,b_k^{(n)})\right)\right)=\frac{n!}{(k-1)!}\binom{n-1}{k-1}\E H_{b_1^{(n)}}\\
&=\frac{n!}{(k-1)!}\binom{n-1}{k-1}\sum_{j=1}^{n-k+1}H_j \P[b_1^{(n)}=j]=\frac{n!}{(k-1)!}\sum_{j=1}^{n-k+1}H_j \binom{n-j-1}{k-2}\\
&=\frac{n!}{(k-1)!}\sum_{j=1}^{n-k+1}\sum_{i=1}^{j}\frac{1}{i} \binom{n-j-1}{k-2}=\frac{n!}{(k-1)!}\sum_{i=1}^{n-k+1}\frac{1}{i}\sum_{j=i}^{n-k+1}\binom{n-j-1}{k-2}\\
&=\frac{n!}{(k-1)!}\sum_{i=1}^{n-k+1}\frac{1}{i}\binom{n-i}{k-1}.
\end{align*}
\end{remark}

\begin{remark}
The Narumi polynomials $s_{\ell;a}(z)$, $\ell\in \N_0$, with parameter $a\in \Z$ are defined by the formula
$$
\sum_{\ell=0}^\infty \frac{s_{\ell;a}(z)}{\ell!} t^\ell = \left(\frac{t}{\log(1+t)}\right)^{a} (1+t)^z;
$$
see~\cite{narumi}. With this notation,  Theorem~\ref{theo:lah_expect} takes the form
$$
\sum_{j=k}^n j \stirling{n}{j} \stirlingsec{j}{k} = (-1)^{n-k} k\binom{n}{k} s_{n-k; -1}(-k) = k\binom{n}{k}  s_{n-k; -1}(n).
$$
More generally, by taking the $p$-th derivative of the function $((1-x)^{-t} - 1)^k$ at $t=1$ it is possible to express the $p$-th factorial moment of the Lah distribution through the Narumi polynomials with $a = - p$.
For example, for the second factorial moment we get
$$
\sum_{j=k}^n j(j-1) \stirling{n}{j} \stirlingsec{j}{k}
=
\frac{(-1)^{n+k} n!s_{n-k;-2}(-k)}{(k-2)! (n-k)!} -   \frac{(-1)^{n+k} n!s_{n-k-1;-2}(-k)}{(k-1)! (n-k-1)!}.
$$
Expressions for higher factorial moments obtained in this way become more complicated, but we shall present a relatively simple general formula in Theorem~\ref{theo:fact_moments_lah}.  Note that the Narumi polynomials satisfy the functional equation $s_{\ell; a}(z) = s_{\ell;a} (\ell - a - 1 - z)$ which can be shown by using the Cauchy formula together with the same substitution as the one used to pass from~\eqref{eq:oint_1} to~\eqref{eq:oint_2}.
\end{remark}

\subsubsection{Exact formula for factorial moments}
The next theorem states a formula for the $p$-th factorial moment of the Lah distribution, up to a factor of $p!/L(n,k)$.
\begin{theorem}[Factorial moments]\label{theo:fact_moments_lah}
For all $n\in \N$, $k\in \{1,\ldots,n\}$ and $p\in \N$ we have
$$
\sum_{j=k}^n \stirling{n}{j} \stirlingsec{j}{k} \binom jp
=
n! \sum_{i=1}^{n-k+1} \binom{n-i}{k-1} \sum_{m=1}^{\min \{k,p\}} \frac{\stirlingsec{p}{m}}{(k-m)!}   \frac{\stirling{i+m-1}{p}}{(i+m-1)!} .
$$
\end{theorem}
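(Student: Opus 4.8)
The plan is to differentiate the bivariate generating identity of Lemma~\ref{lem:stirling_product} exactly $p$ times in $t$ and then read off the coefficient of $x^n$. Recall from~\eqref{eq:P_n_k_coeff} (equivalently Lemma~\ref{lem:stirling_product}) that
$$
\frac{k!}{n!}\sum_{j=k}^n \stirling{n}{j}\stirlingsec{j}{k}t^j = [x^n]\left((1-x)^{-t}-1\right)^k .
$$
For fixed $n$ the right-hand side is a polynomial in $t$, so no analytic subtleties arise in differentiating. Since $\tfrac{1}{p!}\,\partial_t^p\, t^j\big|_{t=1}=\binom{j}{p}$, applying the operator $\tfrac{1}{p!}\,\partial_t^p\big|_{t=1}$ turns the left-hand side into $\tfrac{k!}{n!}\sum_{j=k}^n \stirling{n}{j}\stirlingsec{j}{k}\binom{j}{p}$, which is the quantity we want up to the factor $k!/n!$.

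For the right-hand side I would expand $\left((1-x)^{-t}-1\right)^k = \sum_{\ell=0}^k\binom{k}{\ell}(-1)^{k-\ell}(1-x)^{-\ell t}$, write each term as $(1-x)^{-\ell t}=\eee^{\ell t\log\frac{1}{1-x}}$, and differentiate $p$ times. Evaluating at $t=1$ gives
$$
\frac{1}{p!}\,\partial_t^p\Big|_{t=1}\left((1-x)^{-t}-1\right)^k = \frac{1}{p!}\left(\log\frac{1}{1-x}\right)^p\sum_{\ell=1}^k\binom{k}{\ell}(-1)^{k-\ell}\ell^p(1-x)^{-\ell},
$$
where the $\ell=0$ term drops out because of the factor $\ell^p$.

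The crucial algebraic step is to re-index the inner sum so that Stirling numbers of the second kind appear. Writing $\ell^p=\sum_{m}\stirlingsec{p}{m}\,\ell(\ell-1)\cdots(\ell-m+1)$, using the identity $\binom{k}{\ell}\ell(\ell-1)\cdots(\ell-m+1)=\frac{k!}{(k-m)!}\binom{k-m}{\ell-m}$, shifting $\ell\mapsto\ell-m$, and summing the resulting binomial series, I expect the inner sum to collapse to
$$
\sum_{\ell=1}^k\binom{k}{\ell}(-1)^{k-\ell}\ell^p(1-x)^{-\ell} = \sum_{m=1}^{\min\{k,p\}}\stirlingsec{p}{m}\frac{k!}{(k-m)!}(1-x)^{-m}\left(\frac{x}{1-x}\right)^{k-m},
$$
the relation $(1-x)^{-1}-1=x/(1-x)$ producing the geometric factor, while the cutoff $m\le\min\{k,p\}$ comes from $\stirlingsec{p}{m}=0$ for $m>p$ and $\binom{k-m}{\ell-m}=0$ for $m>k$. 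Since $(1-x)^{-m}(x/(1-x))^{k-m}=x^{k-m}(1-x)^{-k}$, cancelling $k!$ and rearranging would yield
$$
\sum_{j=k}^n \stirling{n}{j}\stirlingsec{j}{k}\binom{j}{p} = \frac{n!}{p!}\sum_{m=1}^{\min\{k,p\}}\frac{\stirlingsec{p}{m}}{(k-m)!}\,[x^{n-k+m}]\left((1-x)^{-k}\left(\log\frac{1}{1-x}\right)^p\right).
$$
This collapse is the one genuinely delicate point: one must arrange the second-kind expansion and the binomial resummation so that the upper limit $\min\{k,p\}$ and the exponent $k-m$ come out exactly right.

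Finally I would extract the remaining coefficient by multiplying the two Taylor series $\frac{1}{p!}(\log\frac{1}{1-x})^p=\sum_{a\ge p}\stirling{a}{p}\frac{x^a}{a!}$ (the first identity in~\eqref{eq:stirling_gen_funct}) and $(1-x)^{-k}=\sum_{b\ge 0}\binom{k+b-1}{k-1}x^b$, which gives
$$
\frac{1}{p!}[x^{n-k+m}]\left((1-x)^{-k}\left(\log\frac{1}{1-x}\right)^p\right)=\sum_{a=p}^{n-k+m}\frac{\stirling{a}{p}}{a!}\binom{n+m-a-1}{k-1}.
$$
The substitution $a=i+m-1$ turns the right-hand side into $\sum_{i=1}^{n-k+1}\binom{n-i}{k-1}\frac{\stirling{i+m-1}{p}}{(i+m-1)!}$, the change of summation limits being legitimate because $m\le p$ forces $\stirling{a}{p}=0$ on the extra terms with $a<p$. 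Substituting this back and interchanging the order of the two sums would then give exactly the claimed identity; after the collapse in the previous paragraph, the rest is routine coefficient bookkeeping with the index shift and the vanishing of $\stirling{a}{p}$ for $a<p$ taking care of matching the ranges.
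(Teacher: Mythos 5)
Your proposal is correct and follows essentially the same route as the paper: differentiate the bivariate generating identity $\frac{k!}{n!}\sum_j\stirling{n}{j}\stirlingsec{j}{k}t^j=[x^n]((1-x)^{-t}-1)^k$ $p$ times at $t=1$, reduce to $\sum_m\frac{\stirlingsec{p}{m}}{(k-m)!}[x^{n-k+m}]\bigl((1-x)^{-k}(\log\frac{1}{1-x})^p\bigr)$, and extract the coefficient by multiplying the two Taylor series and re-indexing. The only cosmetic difference is how the $\stirlingsec{p}{m}$ appear: the paper factors out $(1-x)^{-k}$ and recognizes the e.g.f.\ $(\eee^{y}-1)^m/m!$, while you expand the $k$-th power binomially and use $\ell^p=\sum_m\stirlingsec{p}{m}\,\ell^{\underline{m}}$ — both collapse to the same intermediate expression, and your verification of the delicate resummation step is sound.
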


For small values of $p$, this formula allows to express the $p$-th factorial moment of the Lah distribution  in terms of binomial coefficients and the generalized harmonic numbers
$$
H_n^{(m)} = \frac 1 {1^m} + \frac 1 {2^m} + \frac 1 {3^m} +\cdots + \frac 1 {n^m},
\qquad
H_n := H_n^{(1)}.  
$$
Indeed, if $p$ is ``small'', then the numbers $\stirlingsec{p}{m}$ on the right-hand side are explicit constants, while the numbers $\stirling{i}{p}$ can be expressed in terms of the generalized harmonic numbers, for example
$$
\stirling{i}{1} = (i-1)!,
\qquad
\stirling{i}{2} = (i-1)! H_{i-1},
\qquad
\stirling{i}{3} = \frac 12 (i-1)! \left((H_{i-1})^2 - H_{i-1}^{(2)}\right),\qquad \ldots.
$$
Specifically, for $p=1$ we recover the first formula in~\eqref{eq:expect_lah_binomial_sum}, while for $p=2$, we obtain after some straightforward computations the following expression (which can easily be combined with~\eqref{eq:expect_lah_binomial_sum} to write down an exact formula for the variance of the Lah distribution).
\begin{corollary}
For all $n\in \N$ and $k\in \{1,\ldots,n\}$ we have
$$
\sum_{j=k}^n j(j-1) \stirling{n}{j} \stirlingsec{j}{k}
=
\frac{2\cdot n!}{(k-1)!} \sum_{i=1}^{n-k+1}  \binom {n-i}{k-1} \left(\frac{ H_{i}\cdot (1+ik)}{i(i+1)} - \frac{1}{i^2}\right).
$$
\end{corollary}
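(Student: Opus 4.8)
The plan is to specialize the factorial-moment formula of Theorem~\ref{theo:fact_moments_lah} to $p=2$ and then collapse the resulting finite inner sum into the claimed closed form. Since $\binom{j}{2}=j(j-1)/2$, proving the corollary is equivalent to showing
$$
\sum_{j=k}^n \stirling{n}{j}\stirlingsec{j}{k}\binom{j}{2}
=
\frac{n!}{(k-1)!}\sum_{i=1}^{n-k+1}\binom{n-i}{k-1}\left(\frac{H_i(1+ik)}{i(i+1)}-\frac{1}{i^2}\right).
$$
First I would substitute $p=2$ into Theorem~\ref{theo:fact_moments_lah}; the inner sum over $m$ then runs only over $m\in\{1,2\}$ (truncated at $\min\{k,2\}$), and the Stirling numbers of the second kind reduce to the constants $\stirlingsec{2}{1}=\stirlingsec{2}{2}=1$.

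Next I would evaluate the two surviving terms using the identity $\stirling{i}{2}=(i-1)!\,H_{i-1}$ recorded just above the corollary. For $m=1$ this gives $\stirling{i}{2}/i!=H_{i-1}/i$, and for $m=2$, after shifting $i\mapsto i+1$ so that $\stirling{i+1}{2}=i!\,H_i$, it gives $\stirling{i+1}{2}/(i+1)!=H_i/(i+1)$. Pulling out the common factor $1/(k-1)!$ from the denominators $(k-1)!$ and $(k-2)!$, the inner sum over $m$ becomes
$$
\frac{1}{(k-1)!}\left(\frac{H_{i-1}}{i}+(k-1)\frac{H_i}{i+1}\right),
$$
so that the whole expression takes the form claimed above but with bracket $\frac{H_{i-1}}{i}+(k-1)\frac{H_i}{i+1}$.

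It then remains to verify the elementary algebraic identity
$$
\frac{H_{i-1}}{i}+(k-1)\frac{H_i}{i+1}
=
\frac{H_i(1+ik)}{i(i+1)}-\frac{1}{i^2},
$$
which follows by writing $H_{i-1}=H_i-1/i$ and combining the $H_i$-coefficients via $\frac{1}{i}+\frac{k-1}{i+1}=\frac{1+ik}{i(i+1)}$. Multiplying through by $2$ recovers the stated formula for $\sum_{j=k}^n j(j-1)\stirling{n}{j}\stirlingsec{j}{k}$. The only point requiring a little care is the boundary case $k=1$, where $\min\{k,2\}=1$ and only the $m=1$ term is present; but there the coefficient $(k-1)$ of the $m=2$ contribution vanishes anyway, so the unified bracket remains valid, and one may likewise check the endpoint $i=1$ directly (using $H_0=0$). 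I expect no genuine obstacle here: the substantive work resides in Theorem~\ref{theo:fact_moments_lah}, and this corollary is essentially a bookkeeping exercise combined with the one-line harmonic-number identity above.
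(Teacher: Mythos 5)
Your proposal is correct and follows exactly the route the paper intends: the paper derives this corollary by specializing Theorem~\ref{theo:fact_moments_lah} to $p=2$ and performing "straightforward computations," which are precisely the harmonic-number substitutions and the algebraic consolidation you carry out. Your verification of the identity $\frac{H_{i-1}}{i}+(k-1)\frac{H_i}{i+1}=\frac{H_i(1+ik)}{i(i+1)}-\frac{1}{i^2}$ and your check of the $k=1$ boundary case are both accurate.
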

Let us also mention that for $k=1$, the identity of Theorem~\ref{theo:fact_moments_lah} takes the following form: for all $p\in \N$ and $n\in \N$,
$$
\frac 1 {n!} \sum_{j=1}^{n} \stirling{n}{j} \binom{j}{p} = \sum_{j=1}^n \frac{1}{j!} \stirling{j}{p}.
$$
This equality is well known and, in fact, both sides are equal to $\frac 1{n!} \stirling{n+1}{p+1}$; see Entries (6.15) and (6.21) of~\cite{Graham1994}.

\begin{proof}[Proof of Theorem~\ref{theo:fact_moments_lah}]
Let $D_x^n \Big|_{x=x_0} f(x)$ denote the $n$-th derivative of a function $f$ evaluated at $x=x_0$. The starting point of the proof is the formula
$$
k!\sum_{j=k}^n \stirling{n}{j} \stirlingsec{j}{k} t^j = D_x^n \Big|_{x=0} \left((1-x)^{-t} - 1\right)^k
$$
which follows from Lemma~\ref{lem:gener_funct_formula}.
Taking the $p$-th derivative at $t=1$ we arrive at
$$
\sum_{j=k}^n \stirling{n}{j} \stirlingsec{j}{k} j^{\underline {p}}
=
D_x^n \Big|_{x=0} D_t^p \Big|_{t=1}  \frac{\left((1-x)^{-t} - 1\right)^k}{k!},
$$
where
$$
j^{\underline{p}} := j(j-1) \cdots (j-p+1)
$$
denotes the falling factorial.
 Our next goal is to write the right-hand side as a function of $1-t$. Extracting the factor $1/(1-x)$ and using the binomial formula, we arrive at
\begin{align*}
\sum_{j=k}^n \stirling{n}{j} \stirlingsec{j}{k} j^{\underline{p}}
&=
D_x^n \Big|_{x=0} D_t^p \Big|_{t=1}
\left(
\frac 1 {(1-x)^{k}}  \frac{\left((1-x)^{1-t} - 1 + x\right)^k} {k!}
\right)\\
&=
D_x^n \Big|_{x=0} D_t^p \Big|_{t=1}
\left(\frac 1 {(1-x)^{k}}
\sum_{m=0}^k \frac1 {k!} \binom km ((1-x)^{1-t}-1)^m x^{k-m}
\right)
\\
&=
D_x^n \Big|_{x=0} D_t^p \Big|_{t=1}
\left(\frac 1 {(1-x)^{k}}
\sum_{m=0}^k \frac{((1-x)^{1-t}-1)^m}{m!} \frac{x^{k-m}}{(k-m)!}
\right).
\end{align*}
Introducing the variable $s:=t-1$, we can write
\begin{align*}
\sum_{j=k}^n \stirling{n}{j} \stirlingsec{j}{k} j^{\underline{p}}
&=
D_x^n \Big|_{x=0} D_s^p \Big|_{s=0}
\left(\frac 1 {(1-x)^{k}}
\sum_{m=0}^k \frac{((1-x)^{-s}-1)^m}{m!} \frac{x^{k-m}}{(k-m)!}
\right)\\
&= D_x^n \Big|_{x=0}
\left(\frac 1 {(1-x)^{k}}
\sum_{m=0}^k \frac{x^{k-m}}{(k-m)!}  D_s^p \Big|_{s=0} \frac{(\eee^{s\log \frac 1 {1-x} }-1)^m}{m!}
\right)\\
&= D_x^n \Big|_{x=0}
\left(\frac 1 {(1-x)^{k}}
\sum_{m=0}^k
 \frac{x^{k-m}}{(k-m)!} \left(\log \frac 1 {1-x}\right)^p \stirlingsec{p}{m}\right),
\end{align*}
where we have used the second relation in~\eqref{eq:stirling_gen_funct} for the last passage. Interchanging the order of summation, we obtain
\begin{align*}
\sum_{j=k}^n \stirling{n}{j} \stirlingsec{j}{k} j^{\underline{p}}
&=
\sum_{m=0}^k
\stirlingsec{p}{m} \frac{1}{(k-m)!}
D_x^n \Big|_{x=0}
\left(\frac 1 {(1-x)^{k}}
x^{k-m}
\left(\log \frac 1 {1-x}\right)^p\right)\\
&=
n! \sum_{m=0}^k
\stirlingsec{p}{m} \frac{1}{(k-m)!}
[x^{n-k+m}] \left(\frac 1 {(1-x)^{k}}
\left(\log \frac 1 {1-x}\right)^p\right).
\end{align*}
Now we use the formulas
$$
\left(\log \frac 1 {1-x}\right)^p = p! \sum_{i=p}^\infty \frac{x^i}{i!} \stirling{i}{p},
\qquad
\frac 1 {(1-x)^{k}} = \sum_{j=0}^\infty x^j \binom {j+k-1}{j}.
$$
Multiplying these two series and evaluating the coefficient of $x^{n-k+m}$, we get
\begin{align*}
\sum_{j=k}^n \stirling{n}{j} \stirlingsec{j}{k} j^{\underline{p}}
&=
n! p!
\sum_{m=0}^k
\stirlingsec{p}{m} \frac{1}{(k-m)!}
\sum_{i=p}^{n-k+m} \stirling{i}{p} \frac 1 {i!} \binom {n+m-i-1}{k-1}\\
&=
n! p!
\sum_{m=0}^k \sum_{i=p}^{n-k+m}
\stirlingsec{p}{m} \stirling{i}{p} \frac{1}{i!(k-m)!}
\binom {n+m-i-1}{k-1}.
\end{align*}
Observe that the summation range in the first sum can be replaced by $m\in \{1,\ldots, \min \{k,p\}\}$ because $\stirlingsec{p}{m} = 0$ for $m=0$ and $m>p$. After dividing by $p!$ this yields
$$
 \sum_{j=k}^n \stirling{n}{j} \stirlingsec{j}{k} \binom jp
=
n! \sum_{m=1}^{\min \{k,p\}} \frac{1}{(k-m)!} \stirlingsec{p}{m}  \sum_{i=p}^{n-k+m} \frac{1}{i!} \stirling{i}{p}  \binom {n+m-i-1}{k-1}.
$$
To complete the proof, introduce the summation index $j=i-m+1$ and interchange the order of summation.
\end{proof}

\subsubsection{Asymptotics of the expectation}
Based on the exact expression given in Theorem~\ref{theo:lah_expect}, we are able to derive the following
\begin{theorem}[Asymptotics of the expectation]\label{theo:expect_asympt}
Let $n\to\infty$ and $k=k(n)\in \{1,\ldots,n\}$ be a function of $n$. Then,
\begin{equation}\label{eq:lah_distr_expect_asympt}
\E \Lah(n,k) \sim
\begin{cases}
k \log (n/k), &\text{ if $k=o(n)$,}\\
\frac{\alpha \log (1/\alpha)}{1-\alpha} \cdot n, &\text{ if $k \sim \alpha n$ for some $\alpha \in (0,1)$,}\\
n, &\text{ if $k \sim n$.}
\end{cases}
\end{equation}
We write $a_n\sim b_n$ if $a_n/b_n\to 1$ as $n\to\infty$.
\end{theorem}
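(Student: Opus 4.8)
The plan is to reduce the whole statement to the asymptotics of a single quantity: the expected harmonic number of the size of the first block of a uniform random composition. Writing $b_1^{(n)}$ for the size of the first summand of a uniform random composition of $n$ into $k$ parts, the Remark following Theorem~\ref{theo:lah_expect} (equivalently, rewriting the first sum in~\eqref{eq:expect_lah_binomial_sum} via~\eqref{eq:b_1_n_distribution}) gives the clean identity
$$
\E \Lah(n,k) = k\, \E H_{b_1^{(n)}},
$$
where, by~\eqref{eq:b_1_n_distribution} and the hockey-stick identity, $\P[b_1^{(n)} \ge j] = \binom{n-j}{k-1} / \binom{n-1}{k-1}$. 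Since $\log m \le H_m \le 1 + \log m$ for all $m\ge 1$, the task splits into tracking the prefactor $k$ and the logarithmic size $\E\log b_1^{(n)}$. Throughout I shall use the symmetry fact $\E b_1^{(n)} = n/k$ (the $k$ identically distributed block sizes sum to $n$).

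I first dispose of the two regimes in which $b_1^{(n)}$ has a nondegenerate limit or concentrates. When $k \sim n$, we have $b_1^{(n)} \ge 1$ and $\E b_1^{(n)} = n/k \to 1$, so from $H_m - 1 \le m - 1$ we get $0 \le \E H_{b_1^{(n)}} - 1 \le \E b_1^{(n)} - 1 = n/k - 1 \to 0$; hence $\E\Lah(n,k) = k\,\E H_{b_1^{(n)}} \sim k \sim n$. When $k \sim \alpha n$ with $\alpha \in (0,1)$, the consecutive-tail ratio $\P[b_1^{(n)} \ge j+1]/\P[b_1^{(n)} \ge j] = (n-j-k+1)/(n-j)$ is bounded by $1 - (k-1)/n \to 1 - \alpha$, which both forces the pointwise limit $\P[b_1^{(n)} = j] \to \alpha(1-\alpha)^{j-1}$ (so $b_1^{(n)}$ converges in distribution to a geometric law with parameter $\alpha$) and yields a uniform geometric tail bound, giving uniform integrability of the slowly growing $H_{b_1^{(n)}}$. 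Therefore $\E H_{b_1^{(n)}} \to \E H_G$ for a geometric $G$, and evaluating the generating identity $\sum_{j\ge1} H_j x^j = -\log(1-x)/(1-x)$ at $x = 1-\alpha$ gives $\E H_G = \log(1/\alpha)/(1-\alpha)$. Multiplying by $k \sim \alpha n$ produces the middle line of~\eqref{eq:lah_distr_expect_asympt}; note that letting $\alpha \to 1$ in $\alpha\log(1/\alpha)/(1-\alpha)$ recovers $1$, consistent with the case $k \sim n$.

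The remaining regime $k = o(n)$, where $n/k \to \infty$ and the typical block size diverges, is the substantial one; here I must show $\E\log b_1^{(n)} \sim \log(n/k)$. The upper bound is immediate from Jensen's inequality and concavity of the logarithm: $\E\log b_1^{(n)} \le \log \E b_1^{(n)} = \log(n/k)$. For the matching lower bound I truncate. Expanding $\log\P[b_1^{(n)} \ge j] = \sum_{l=0}^{k-2}\log(1 - \tfrac{j-1}{n-1-l})$ and using $\sum_{l=0}^{k-2}\tfrac{1}{n-1-l} = H_{n-1} - H_{n-k} \sim k/n$, one obtains, for $j \le xn/k$ with $x$ fixed, a bound of the form $\P[b_1^{(n)} \ge xn/k] \ge \eee^{-x - x^2 - o(1)}$ as $n \to \infty$, uniformly over all sequences with $k = o(n)$. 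Since $\log b_1^{(n)} \ge 0$, restricting the expectation to the event $\{b_1^{(n)} \ge xn/k\}$ gives
$$
\E\log b_1^{(n)} \ge \log(xn/k)\,\P[b_1^{(n)} \ge xn/k] \ge (\log(n/k) + \log x)\,\eee^{-x - x^2 - o(1)},
$$
so that $\liminf_n \E\log b_1^{(n)}/\log(n/k) \ge \eee^{-x-x^2}$ for every $x > 0$; letting $x \to 0$ yields $\ge 1$. Combined with the Jensen bound this proves $\E\log b_1^{(n)} \sim \log(n/k)$, whence $\E\Lah(n,k) = k\,\E H_{b_1^{(n)}} \sim k\log(n/k)$.

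I expect the last lower bound, in the regime $k = o(n)$, to be the main obstacle. Convergence in distribution of $b_1^{(n)}$ (after rescaling by $k/n$, to an exponential law when $k \to \infty$) is by itself insufficient, because $\log$ is unbounded below near the origin and one must rule out that the small block sizes---whose total probability need not tend to zero---drag $\E\log b_1^{(n)}$ strictly below $\log(n/k)$. What makes this controllable is the explicit tail formula $\binom{n-j}{k-1}/\binom{n-1}{k-1}$ together with the truncation at level $xn/k$; the very same tail estimates, via the uniform geometric domination noted above, also supply the uniform integrability needed in the proportional regime.
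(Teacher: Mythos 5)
Your proof is correct, and it shares its backbone with the paper's: both arguments run through the identity $\E \Lah(n,k)=k\,\E H_{b_1^{(n)}}$ (which the paper derives in the remark following Theorem~\ref{theo:lah_expect} and uses for its own upper bound in the sublinear regime), and both use Jensen's inequality for the upper bound $\E\log b_1^{(n)}\leq \log(n/k)$. The differences are in execution. In the proportional regime the paper applies dominated convergence directly to the explicit sum $k\sum_i \frac1i\prod_m\frac{n-k-m+1}{n-m}$, whereas you pass to weak convergence of $b_1^{(n)}$ to a geometric law plus uniform integrability of $H_{b_1^{(n)}}$; these are the same interchange-of-limits argument in different clothing, and your uniform geometric tail bound from the ratio $\P[b_1^{(n)}\geq j+1]/\P[b_1^{(n)}\geq j]\leq 1-\frac{k-1}{n}$ does supply the needed domination. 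The genuinely different step is the lower bound when $k=o(n)$: the paper truncates the harmonic-type sum at $i\leq n/(Ak)$ and uses $\prod_j(1-x_j)\geq 1-\sum_j x_j$ to get $(1-\frac2A)\log\frac nk-O_A(1)$, while you discard everything except the single tail event $\{b_1^{(n)}\geq xn/k\}$, bound its probability below by $\eee^{-x-x^2-o(1)}$ via the explicit ratio of binomial coefficients, and let $x\downarrow 0$. Your estimate checks out (the first-order term is $(j-1)(H_{n-1}-H_{n-k})\leq x(1+o(1))$ and the second-order term is $O(x^2)$, uniformly over $k=o(n)$ including bounded $k$), and it buys a slightly more compact argument at the cost of the quantitative form of the lower bound, which the theorem does not need anyway. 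Your separate elementary treatment of $k\sim n$ via $H_m\leq m$ is also fine and replaces the paper's inclusion of $\alpha=1$ in the dominated-convergence case.
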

\begin{proof}
According to~\eqref{eq:expect_lah_binomial_sum}, see also~\eqref{eq:lah_numb_def}, we have
\begin{equation}\label{eq:lah_expect_transform}
\E \Lah(n,k)
=
k \sum_{i=1}^{n-k+1} \frac 1 i \frac{\binom {n-i}{k-1}}{\binom{n-1}{k-1}}
=
k \sum_{i=1}^{n-k+1} \frac 1 i
\prod_{m=1}^{i-1} \frac{n-k-m+1}{n-m}.
\end{equation}
Now suppose that  $k\sim  \alpha n$ for some $\alpha \in (0,1)$ or $\alpha = 1$.  If $i\in \N$ is fixed, then
$$
\lim_{n\to\infty} \frac 1 i
\prod_{m=1}^{i-1} \frac{n-k-m+1}{n-m}
=  \frac {(1-\alpha)^{i-1}}{i}.
$$
Moreover, we have the bound
$$
\max_{m=1,\ldots,n-k} \frac{n-k-m+1}{n-m} \leq \frac {n-k+1}{n} \ton 1-\alpha.
$$
It follows that for some sufficiently small $\eps>0$ and all sufficiently large $n$ we have
$$
\frac 1 i \prod_{m=1}^{i-1} \frac{n-k-m+1}{n-m}
\leq \frac {(1-\eps)^{i-1}}{i}, \qquad
i\in \{1,\ldots, n-k+1\}.
$$
Note that the right-hand side is summable in $i\in \N$. Interchanging the limit and the sum by the Lebesgue dominated convergence theorem, we obtain
$$
\lim_{n\to\infty}
\sum_{i=1}^{n-k+1} \frac 1 i
\prod_{m=1}^{i-1} \frac{n-k-m+1}{n-m}
=
\sum_{i=1}^\infty \frac {(1-\alpha)^{i-1}}{i}
=
\begin{cases}
\frac{-\log \alpha}{1-\alpha}, & \text{ if } \alpha \in (0,1),\\
1, &\text{ if } \alpha = 1.
\end{cases}
$$

Let us now consider the case $k=o(n)$. Note that we do not require that $k\to\infty$. The idea is to show that in the sum on the right-hand side of~\eqref{eq:lah_expect_transform}, the summands with $i<n/k$ are approximately equal to $1/i$, whereas the contribution of the remaining summands is $O(1)$. Take some large constant $A>1$ and let $n $ be sufficiently large in the following. We start with a lower estimate. Recalling~\eqref{eq:lah_expect_transform}, dropping summands with $i>n/(Ak)$ and using the inequality $\prod_{j=1}^M (1-x_j) \geq 1- \sum_{j=1}^M x_j$ which is valid for arbitrary numbers $x_1,\ldots,x_M\in [0,1]$  and can be easily established by induction, we get
$$
\frac 1k \E \Lah(n,k)
=
\sum_{i=1}^{n-k+1} \frac 1 i
\prod_{m=1}^{i-1} \left( 1 - \frac{k-1}{n-m}\right)
\geq
\sum_{i=1}^{[n/(A k)]} \frac 1 i
\left(1 - \sum_{m=1}^{i-1} \frac{k-1}{n-m}\right).
$$
For $1\leq m < i \leq n/(Ak)$ we have  $n-m\geq n - n/(Ak) \geq n/2$ and hence
$$
\sum_{m=1}^{i-1} \frac{k-1}{n-m} \leq \sum_{m=1}^{i-1} \frac {k}{n/2} \leq \frac{2ki}{n} \leq \frac 2 A.
$$
It follows that
$$
\frac 1k \E \Lah(n,k)
\geq
\left(1 - \frac 2A\right) \sum_{i=1}^{[n/(A k)]} \frac 1 i = \left(1 - \frac 2A\right) \log \frac nk - O_A(1).
$$
Since $\log (n/k)\to\infty$ and $A$ can be arbitrarily large, we arrive at the lower bound
$$
\liminf_{n\to\infty} \frac{\E \Lah(n,k)}{ k \log (n/k)} \geq 1.
$$
To prove the upper bound, we shall combine~\eqref{eq:representation_as_a_sum_over_blocks_of_composition} and the elementary estimate $H_n=\log n+O(1)$, as follows:
$$
\E \Lah(n,k)=k \E H_{b_1^{(n)}}=k (\E \log b_1^{(n)} +O(1))\leq k (\log \E b_1^{(n)} +O(1))=k (\log (n/k) +O(1)),
$$
where we have used Jensen's inequality and the fact that
$
n=\E (b_1^{(n)}+\cdots+b_k^{(n)})=k\E b_1^{(n)}.
$
Therefore,
$$
\limsup_{n\to\infty} \frac{\E \Lah(n,k)}{ k \log (n/k)} \leq 1,
$$
which completes the proof.
\end{proof}
\begin{remark}
Let us mention a strange connection of~\eqref{eq:lah_distr_expect_asympt} to a seemingly unrelated problem studied in~\cite{cilleruelo}.
Let $L_n$ be the least common multiple of a  random set $A_n$ of integers obtained in the following way: every number from the set $\{1,\ldots,n\}$ is included in $A_n$ with probability $\alpha\in (0,1)$, independently from the others. Then, Theorem~1.1 in~\cite{cilleruelo} states that
$$
\frac{\log L_n}{n} \ton \frac{\alpha  \log (1/\alpha)}{1-\alpha}
\quad
\text{ in probability}.
$$
The expression on the right-hand side is the same as in~\eqref{eq:lah_distr_expect_asympt}. Moreover, Theorem~1.2 in~\cite{cilleruelo} bears similarity with the $k=o(n)$ case of~\eqref{eq:lah_distr_expect_asympt}.  We were not able to explain this coincidence. A central limit theorem for $\log L_n$ has been proved in~\cite[Corollary~1.5]{alsmeyer_kabluchko_Marynych_LCM}. The asymptotic variance given in~\cite[Remark~1.3]{alsmeyer_kabluchko_Marynych_LCM} does not coincide with the asymptotic variance of the Lah distribution given in Theorem~\ref{thm:clt_central_regime}.
\end{remark}

\subsection{Log-concavity and unimodality}

Well known properties of the Stirling numbers of both kinds yield the following proposition.

\begin{proposition}\label{prop:log_concave}
For each $n\in \N$ and $k\in \{1,\ldots,n\}$, the Lah distribution $\Lah (n,k)$ is log-concave, that is
$$
\P[\Lah(n,k) = i]^2 \geq \P[\Lah(n,k) = i-1]\P[\Lah(n,k) = i+1] \qquad \text{ for all } i\in \{k,\ldots,n\}.
$$
\end{proposition}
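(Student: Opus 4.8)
The plan is to strip off the normalizing constant and reduce the statement to a log-concavity property of the unnormalized weights. Since $\P[\Lah(n,k)=i] = \frac{1}{L(n,k)}\stirling{n}{i}\stirlingsec{i}{k}$ and the factor $1/L(n,k)^2$ appears on both sides of the asserted inequality, it suffices to prove that the sequence $c_i := \stirling{n}{i}\stirlingsec{i}{k}$, $i\in\{k,\ldots,n\}$ (extended by $0$ outside this range), is log-concave in $i$, i.e.\ $c_i^2\geq c_{i-1}c_{i+1}$. I would then use the elementary fact that the termwise product of two nonnegative log-concave sequences is log-concave: if $a_i^2\geq a_{i-1}a_{i+1}\geq 0$ and $b_i^2\geq b_{i-1}b_{i+1}\geq 0$, then multiplying the two inequalities gives $(a_ib_i)^2\geq (a_{i-1}b_{i-1})(a_{i+1}b_{i+1})$. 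Hence it is enough to establish separately that $i\mapsto\stirling{n}{i}$ (for fixed $n$) and $i\mapsto\stirlingsec{i}{k}$ (for fixed $k$) are each log-concave in $i$.

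For the first factor I would invoke the classical identity $\sum_{i}\stirling{n}{i}x^i = x(x+1)\cdots(x+n-1)$, already used in the proof of Proposition~\ref{prop:representation}. This polynomial has only the real, nonpositive roots $0,-1,\ldots,-(n-1)$, so by Newton's inequalities its coefficient sequence $\bigl(\stirling{n}{i}\bigr)_i$ is log-concave (in fact ultra-log-concave). This settles the log-concavity of a \emph{row} of the first-kind Stirling triangle, which is the textbook direction.

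For the second factor the relevant direction is a \emph{column} of the second-kind triangle, $i\mapsto\stirlingsec{i}{k}$ with $k$ fixed, and this is the more delicate point, since (unlike the rows) the columns are not governed by a real-rooted polynomial. Here I would pass to the ordinary generating function $\sum_{i\geq k}\stirlingsec{i}{k}x^i = x^k\prod_{m=1}^{k}(1-mx)^{-1}$, which is of Pólya-frequency type in the sense of the Aissen--Schoenberg--Whitney/Edrei characterization: it equals $x^k$ times a finite product of factors $(1-mx)^{-1}$ with $m>0$, and there is no other contribution (no exponential and no factors $(1+\alpha x)$). Consequently the coefficient sequence $\bigl(\stirlingsec{i}{k}\bigr)_i$ is a Pólya frequency (totally positive) sequence, and in particular it is log-concave. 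As an alternative one can verify $\stirlingsec{i}{k}^2\geq\stirlingsec{i-1}{k}\stirlingsec{i+1}{k}$ directly by induction from the recurrence $\stirlingsec{i}{k}=k\stirlingsec{i-1}{k}+\stirlingsec{i-1}{k-1}$, but the generating-function route is cleaner.

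Combining the two log-concavities through the product principle above yields log-concavity of $(c_i)$, hence of $\Lah(n,k)$. The boundary indices $i=k$ and $i=n$ require no extra care: there one of $c_{i-1},c_{i+1}$ vanishes (because $\stirlingsec{k-1}{k}=0$ and $\stirling{n}{n+1}=0$), and the inequality reduces to $c_i^2\geq 0$. The main obstacle I anticipate is precisely the column log-concavity of the second-kind Stirling numbers: the row log-concavity in $k$ is standard, whereas the direction needed here rests on the Pólya-frequency structure of the generating function rather than on mere real-rootedness.
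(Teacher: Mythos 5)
Your proof is correct and follows essentially the same route as the paper: both decompose $\P[\Lah(n,k)=i]$ into the product $\stirling{n}{i}\stirlingsec{i}{k}$ (the normalizer being irrelevant) and multiply the log-concavity inequalities for the row sequence $i\mapsto\stirling{n}{i}$ and the column sequence $i\mapsto\stirlingsec{i}{k}$. The only difference is that the paper simply cites Sibuya for both ingredient facts, whereas you supply proofs — Newton's inequalities via real-rootedness of $x(x+1)\cdots(x+n-1)$ for the first, and the P\'olya-frequency/Aissen--Schoenberg--Whitney characterization of $x^k\prod_{m=1}^k(1-mx)^{-1}$ for the second — both of which are sound and correctly identify the column direction of the second-kind triangle as the delicate point.
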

\begin{proof}
It is well known (see, e.g., \cite[Corollary~3.2]{sibuya}) that the Stirling numbers are log-concave, that is
$$
\stirling{n}{j}^2 \geq \stirling{n}{j+1}\stirling{n}{j-1}.
$$
By~\cite[Theorem~3.3]{sibuya}, the sequence $(\stirlingsec{j+1}{k}/\stirlingsec{j}{k})_{j=k,k+1,\ldots}$ is strictly decreasing for every $k\geq 2$ (and is identically equal to $1$ for $k=1$)  which means that
$$
\stirlingsec{j}{k}^2 \geq \stirlingsec{j+1}{k}\stirlingsec{j-1}{k},
$$
with a strict inequality for $k\geq 2$.  Multiplying these two inequalities, we arrive at the claim.
\end{proof}

\begin{corollary}\label{cor:unimodal}
For each $n\in \N$ and $k\in \{1,\ldots,n\}$, the Lah distribution $\Lah(n,k)$ is unimodal. That is, there exists $m_{n,k}\in \{k,\ldots,n\}$ such that $ i\mapsto \P[\Lah(n,k)=i]$ is nondecreasing for $i\leq m_{n,k}$ and nonincreasing for $i\geq m_{n,k}$.
\end{corollary}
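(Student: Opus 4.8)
The plan is to deduce unimodality directly from the log-concavity already established in Proposition~\ref{prop:log_concave}, via the standard principle that a strictly positive log-concave sequence is unimodal. Throughout I write $p_i := \P[\Lah(n,k)=i]$ for $i\in\{k,\ldots,n\}$.

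The first thing I would record is that the support has no internal zeros. By~\eqref{eq:lah_distr_def} we have $p_i = \stirling{n}{i}\stirlingsec{i}{k}/L(n,k)$, and both Stirling numbers are strictly positive for $k\leq i\leq n$; hence $p_i>0$ on the entire range $\{k,\ldots,n\}$. This positivity is precisely what makes the ratios used below well defined and rules out the gaps that could otherwise break the implication ``log-concave $\Rightarrow$ unimodal''. Next I would rephrase log-concavity in terms of successive ratios: setting $r_i := p_{i+1}/p_i$ for $i\in\{k,\ldots,n-1\}$, the inequality $p_i^2\geq p_{i-1}p_{i+1}$ divides by $p_{i-1}p_i>0$ to yield $r_i\leq r_{i-1}$. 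Thus $(r_i)$ is a nonincreasing sequence of positive reals.

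The final step is to locate the mode. Since $(r_i)$ is nonincreasing, the condition $r_i\geq 1$ (equivalently $p_{i+1}\geq p_i$) holds on an initial block of indices and fails from some point on. Concretely, I would set $m_{n,k} := \min\{i\in\{k,\ldots,n-1\} : r_i<1\}$ when this set is nonempty, and $m_{n,k}:=n$ otherwise. For $i<m_{n,k}$ the monotonicity of the ratios forces $r_i\geq 1$, so $i\mapsto p_i$ is nondecreasing up to $m_{n,k}$; for $i\geq m_{n,k}$ we have $r_i<1$, so $i\mapsto p_i$ is (strictly) decreasing from $m_{n,k}$ onward. This is exactly the asserted unimodality.

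I do not expect any genuine obstacle: the argument is purely formal once positivity and log-concavity are in hand. The only points requiring a moment's care are the boundary bookkeeping, namely the degenerate cases in which the distribution is monotone throughout the range (absorbed by the convention $m_{n,k}=n$ when the defining set is empty), and the remark that strict positivity of the Stirling numbers guarantees an interval support, so that the monotone-ratio argument runs without interruption.
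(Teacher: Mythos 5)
Your proposal is correct and follows exactly the route the paper intends: Corollary~\ref{cor:unimodal} is stated as an immediate consequence of the log-concavity in Proposition~\ref{prop:log_concave}, and your write-up simply makes explicit the standard deduction (strict positivity of the Stirling numbers on the full support, monotonicity of the successive ratios $p_{i+1}/p_i$, and the resulting location of the mode). No gaps; the boundary conventions you note are handled correctly.
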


\subsection{Zeroes of the generating function}
In the following, we shall prove central limit theorems for the Lah distribution.  A natural approach towards this is the Harper method~\cite{harper}, for which one needs to verify that the zeroes of $P_{n,k}(t)$ are real and negative. Numerical simulations, see Figure~\ref{fig:zeroes}, show that the zeroes are not real except in the special case $k=1$ and suggest the following
\begin{conjecture}
All complex zeroes of $P_{n,k}$ have negative real parts.
\end{conjecture}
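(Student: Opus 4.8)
Since the lowest-degree term of $P_{n,k}$ is $\stirling{n}{k}t^{k}/L(n,k)$, the origin is a zero of exact multiplicity $k$; the conjecture must therefore be read as a statement about the \emph{nonzero} zeros, i.e.\ as the assertion that the reduced polynomial $Q_{n,k}(t):=P_{n,k}(t)/t^{k}$ is Hurwitz stable (all zeros in the open left half-plane). My plan is to prove this by a double induction on $n$ and $k$, and the first step is to produce a recursion. Writing $F_{k}(x,t):=((1-x)^{-t}-1)^{k}$ one verifies the identity $(1-x)\partial_{x}F_{k}=kt\,(F_{k}+F_{k-1})$; extracting $[x^{n}]$ and using $\binom{n-1}{k-1}P_{n,k}(t)=[x^{n}]F_{k}(x,t)$ yields
$$(n+1)\binom{n}{k-1}P_{n+1,k}(t) = (n+kt)\binom{n-1}{k-1}P_{n,k}(t) + kt\binom{n-1}{k-2}P_{n,k-1}(t).$$
The base cases are immediate: $P_{n,1}(t)=t(t+1)\cdots(t+n-1)/n!$ has zeros $0,-1,\ldots,-(n-1)$, and $P_{k,k}(t)=t^{k}$, so $Q_{n,1}$ and $Q_{k,k}$ are Hurwitz stable.

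The second step is to propagate stability through this recursion inside the Hermite--Biehler framework. Dividing by $t^{k}$, the recursion reads $(n+1)\binom{n}{k-1}Q_{n+1,k}=A+B$ with
$$A(t):=(n+kt)\binom{n-1}{k-1}Q_{n,k}(t),\qquad B(t):=k\binom{n-1}{k-2}Q_{n,k-1}(t).$$
Assuming inductively that $Q_{n,k}$ and $Q_{n,k-1}$ are Hurwitz stable, the factor $A$ is Hurwitz stable as well, because the extra linear factor $n+kt$ contributes only the admissible zero $t=-n/k$. Writing $A+B=A\,(1+B/A)$, the product $Q_{n+1,k}$ is then nonvanishing in the open right half-plane as soon as $1+B/A$ is, and for this it suffices that $B/A$ be a \emph{positive-real} function, i.e.\ that $\Re\bigl(B(t)/A(t)\bigr)\ge 0$ whenever $\Re t\ge 0$; for then $1+B/A$ maps the open right half-plane into $\{\Re\ge 1\}$.

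The crux of the argument — and the step I expect to be the genuine obstacle — is to establish this positive-real property for
$$\frac{B(t)}{A(t)} = \frac{k(k-1)}{n-k+1}\cdot\frac{1}{\,n+kt\,}\cdot\frac{Q_{n,k-1}(t)}{Q_{n,k}(t)}.$$
The scalar prefactor is positive and the factor $1/(n+kt)$ is individually positive-real, but positive-real functions are not closed under multiplication, so the difficulty is to control the ratio $Q_{n,k-1}/Q_{n,k}$ of the two reduced polynomials and to show that the whole product still maps the right half-plane to itself. I would attack this in one of two ways: either strengthen the inductive hypothesis so that a suitable normalized family of ratios $Q_{n,k-1}/Q_{n,k}$ is itself shown to be positive-real and this property is carried along the recursion; or pass directly to the Hermite--Biehler data, decomposing $Q_{n,k}(i\omega)$ into its even and odd parts in $\omega$ and proving the required interlacing and sign conditions for $Q_{n,k-1}$ and $Q_{n,k}$ simultaneously. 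As heuristic support that the zeros really do lie strictly inside the left half-plane, note that in the variable $w=\log\frac{1}{1-x}$ the generating kernel is $(e^{tw}-1)^{k}$, whose zeros in $t$ lie exactly on the imaginary axis; the coefficient extraction $[x^{n}]$ producing $P_{n,k}$ is precisely the perturbation that should push these boundary zeros into the open left half-plane, and quantifying that push is the heart of the matter.
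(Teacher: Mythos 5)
This statement is left as an open conjecture in the paper: the authors offer only numerical evidence (Figure~\ref{fig:zeroes}) and explicitly decline to investigate the zeros further, so there is no proof of record to compare yours against. Your preliminary observations are correct and worth having: the origin is indeed a zero of $P_{n,k}$ of exact multiplicity $k$ (so the conjecture must be read as concerning the nonzero roots), and your recursion checks out --- from $(1-x)\partial_x F_k = kt\,(F_k+F_{k-1})$ with $F_k=((1-x)^{-t}-1)^k$ and $\binom{n-1}{k-1}P_{n,k}(t)=[x^n]F_k$ one does get $(n+1)\binom{n}{k-1}P_{n+1,k} = (n+kt)\binom{n-1}{k-1}P_{n,k} + kt\binom{n-1}{k-2}P_{n,k-1}$, the base cases $P_{n,1}(t)=t(t+1)\cdots(t+n-1)/n!$ and $P_{k,k}(t)=t^k$ are as you state, and the reduction ``$Q_{n+1,k}=A(1+B/A)$ is Hurwitz provided $A$ is Hurwitz and $B/A$ is positive-real on $\Re t\ge 0$'' is logically sound.

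The proof nevertheless has a genuine gap exactly where you flag it, and it is not a small one: the positive-real property of $B/A=\frac{k(k-1)}{n-k+1}\cdot\frac{1}{n+kt}\cdot\frac{Q_{n,k-1}(t)}{Q_{n,k}(t)}$ is a joint Hermite--Biehler-type statement about the pair $(Q_{n,k},Q_{n,k-1})$ that is at least as strong as the conjecture itself (it implies, for instance, that $Q_{n,k-1}/Q_{n,k}$ has no zeros or poles in the open right half-plane, i.e.\ stability of \emph{both} polynomials, plus a delicate phase condition on the imaginary axis). Since positive-real functions are not closed under multiplication, the individual positive-realness of $1/(n+kt)$ buys nothing, and your single-polynomial inductive hypothesis (stability of $Q_{n,k}$ and $Q_{n,k-1}$) cannot propagate the needed two-polynomial condition through the recursion; you would have to identify and verify a strengthened joint invariant (an interlacing/phase statement for $Q_{n,k-1}$ and $Q_{n,k}$ on $i\R$) and check it in the base cases, none of which is done. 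The closing heuristic about the kernel $(\eee^{tw}-1)^k$ having purely imaginary zeros is suggestive but carries no quantitative content about where the coefficient extraction $[x^n]$ sends them. As it stands the argument establishes a correct and potentially useful recursion but does not prove the conjecture.
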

In fact, this conjecture would also be sufficient to apply Harper's method, see, e.g., \cite[Theorem~3.1]{lebowitz_etal}. We shall not investigate the properties of zeroes here and mention only one result. In the special case $z=1$ it is well known.

\begin{proposition}
Let $k\in \N$ and let $n>k$ be integer. Then, for every $z\in \{1,2,\ldots, \lfloor \frac{n-1}{k}\rfloor\}$ we have
$$
\sum_{j=k}^n \stirling{n}{j} \stirlingsec{j}{k} (-z)^j =0.
$$
\end{proposition}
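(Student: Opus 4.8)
The plan is to read off the statement as the vanishing of the generating function $P_{n,k}$ at negative integer arguments, and to exploit the elementary fact that a negative integer exponent turns $(1-x)^{-t}$ into a \emph{polynomial} of bounded degree. Concretely, I would start from the coefficient representation established in Lemma~\ref{lem:stirling_product} (equivalently, \eqref{eq:P_n_k_coeff} of Lemma~\ref{lem:gener_funct_formula}). Summing the identity of Lemma~\ref{lem:stirling_product} over $j$, and using that $\stirling{n}{j}=0$ for $j>n$ while $\stirlingsec{j}{k}=0$ for $j<k$ (so the sum over $k\le j\le n$ is the full sum), gives
$$
\frac{k!}{n!}\sum_{j=k}^n \stirling{n}{j}\stirlingsec{j}{k}\,t^j = [x^n]\left((1-x)^{-t}-1\right)^k .
$$
It therefore suffices to show that the right-hand side vanishes at $t=-z$ for each integer $z$ with $1\le z\le \lfloor (n-1)/k\rfloor$.

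Next I would substitute $t=-z$. Because $z$ is a positive integer, $(1-x)^{-t}=(1-x)^z$ is a polynomial in $x$ of degree $z$ with constant term $1$; hence $(1-x)^z-1$ is divisible by $x$ and has degree exactly $z$. Consequently $\left((1-x)^z-1\right)^k$ is a polynomial of degree $zk$ (its lowest-order term being $(-z)^k x^k$). The crucial point is then purely a statement about degrees: the coefficient $[x^n]$ of a polynomial of degree $zk$ is zero as soon as $n>zk$.

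Finally I would verify the arithmetic on the admissible range of $z$. The hypothesis $z\le \lfloor (n-1)/k\rfloor$ is equivalent to $zk\le n-1$, hence to $zk<n$, which is precisely the condition $n>zk$ used above. This yields $[x^n]\left((1-x)^z-1\right)^k=0$, and therefore $\sum_{j=k}^n \stirling{n}{j}\stirlingsec{j}{k}(-z)^j=0$, as claimed.

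There is essentially no hard step here; the only thing requiring care is the bookkeeping of degrees together with the translation of the divisibility bound $z\le\lfloor (n-1)/k\rfloor$ into the strict inequality $zk<n$. I would also remark, to connect with the comment preceding the statement, that the case $z=1$ recovers the classical orthogonality relation $\sum_{j=k}^n (-1)^{n-j}\stirling{n}{j}\stirlingsec{j}{k}=\delta_{n,k}$ between the Stirling numbers of the two kinds, which vanishes for $n>k$; thus the proposition may be regarded as a one-parameter extension of that orthogonality.
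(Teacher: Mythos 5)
Your proposal is correct and follows essentially the same route as the paper: both reduce the claim to $P_{n,k}(-z)=0$ via the coefficient representation~\eqref{eq:P_n_k_coeff}, observe that $\left((1-x)^{z}-1\right)^{k}$ is a polynomial of degree $zk$, and note that $z\le\lfloor (n-1)/k\rfloor$ forces $zk<n$ so the coefficient of $x^{n}$ vanishes. The only additions on your side are the explicit degree bookkeeping and the remark about the $z=1$ orthogonality relation, which are fine but not substantive differences.
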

\begin{proof}
It suffices to show that $P_{n,k}(-z) = 0$. We use~\eqref{eq:P_n_k_coeff}.
Note that $(1-x)^{z}$ is  a polynomial in $x$ of degree $z$. Hence, $((1-x)^{z}-1)^k$ is a polynomial in $x$ of degree $z k$. If $z\leq \lfloor \frac{n-1}{k}\rfloor$, then $zk < n$ and the coefficient of $x^n$ in this polynomial vanishes. Then,~\eqref{eq:P_n_k_coeff} implies that $P_{n,k}(-z)=0$.
\end{proof}

\begin{figure}[t]
\begin{center}
\includegraphics[width=0.49\textwidth ]{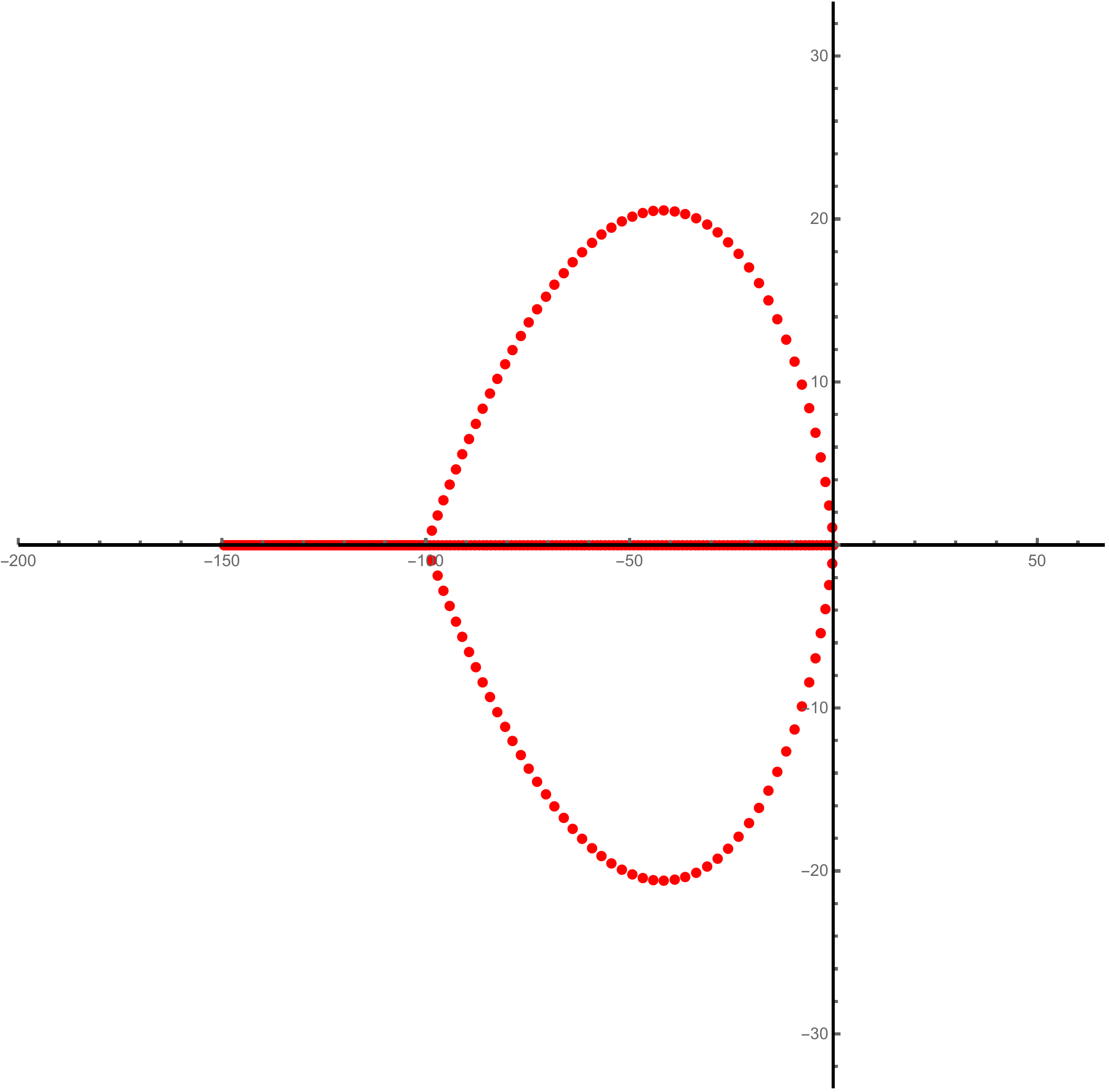}
\includegraphics[width=0.49\textwidth ]{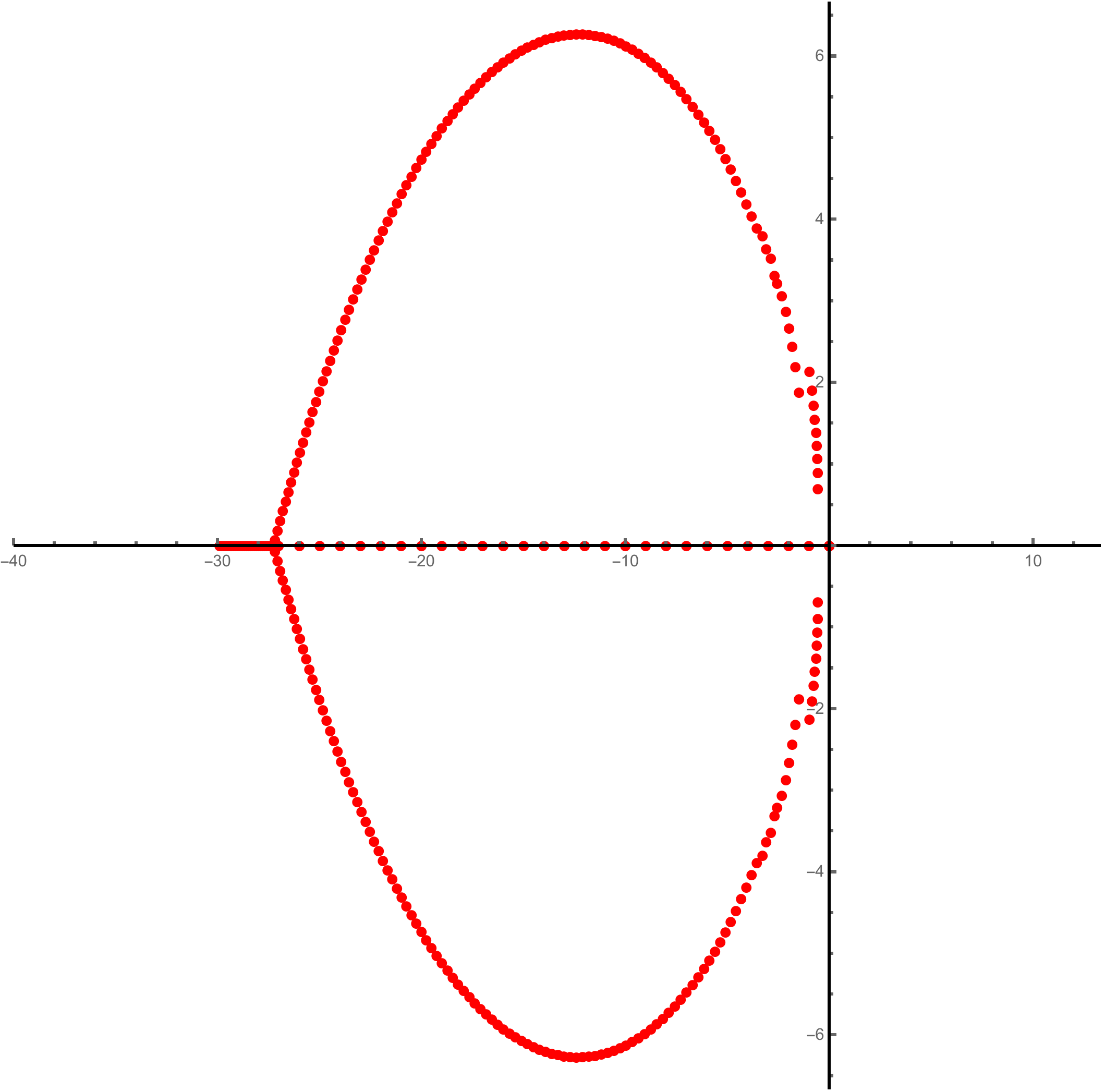}
\end{center}
\caption
{
Complex zeroes of the polynomial $P_{n,k}$ with $n=300$ and $k=2$ (left) and $k=10$ (right).
}
\label{fig:zeroes}
\end{figure}

\section{Limit theorems for the Lah distribution: The constant \texorpdfstring{$k$}{k} regime}\label{sec:constant_k}

\subsection{Mod-Poisson convergence and its consequences}
In this section we shall state and prove limit theorems for the random variables $\Lah(n,k)$ in the regime when $k\in \N$ is fixed and $n\to\infty$. The basic tool we shall use is the notion of mod-Poisson convergence introduced by Kowalski and Nikeghbali in~\cite{kowalski_nikeghbali_poisson}; see also~\cite{barbour_kowalski_nikeghbali,delbaen_kowalski_nikeghbali,jacod_kowalski_nikeghbali_mod_Gauss,kowalski_nikeghbali_zeta,meliot_nikeghbali_statmech} for a more general notion of mod-$\phi$-convergence and~\cite{feray_meliot_nikeghbali_book} for a monograph treatment of the subject.

Let $(X_n)_{n\in\N}$ be a sequence of random variables with values in $\R$ whose Laplace transforms $\E \eee^{z X_n}$ exist finitely for all $z\in \C$ and $(\lambda_n)_{n\in\N}$ a sequence of positive numbers with $\lim_{n\to\infty} \lambda_n = +\infty$.  The sequence $(X_n)_{n\in\N}$ is said to \textit{converge in the mod-Poisson sense} with speed $(\lambda_n)_{n\in \N}$ if
\begin{equation}\label{eq:mod_poi_def}
\lim_{n\to\infty} \frac{\E\eee^{zX_n}}{\eee^{\lambda_n(\eee^z-1)}} = \Psi(z)
\end{equation}
uniformly on compact subsets of some open set $\mathcal D\subset \C$ containing the real axis. Here, $\Psi:\mathcal D \to\C$ is some analytic function. In the literature, several non-equivalent definitions of mod-Poisson (and, more generally, mod-$\phi$) convergence exist, which differ by the shape of the domain $\mathcal{D}$. The notion we use here is close but not equivalent to the definition used in the book~\cite{feray_meliot_nikeghbali_book}, see Definition~1.1.1 therein, where $\mathcal{D}$ is assumed to be a vertical strip containing the imaginary axis. Nevertheless, most important corollaries of the mod-$\phi$ convergence continue to  hold under the assumption that $\mathcal D\subset \C$ is an open set containing a segment of the real line, see~\cite[Remark 2.10]{kabluchko_marynych_sulzbach_edgeworth}. As we shall see below in Theorem~\ref{theo:mod_poi}, definition~\eqref{eq:mod_poi_def} is more suitable for the Lah distribution.

To interpret the above definition, recall that the generating function of the Poisson distributed random variable with parameter $\lambda_n$ is given by
$$
\E \eee^{z \text{Poi} (\lambda_n)} = \eee^{\lambda_n (\eee^z - 1)},
$$
which is the denominator in~\eqref{eq:mod_poi_def}. Thus,~\eqref{eq:mod_poi_def} states the heuristic approximation
\begin{equation}\label{eq:mod_Poi_heuristic}
\mlq\mlq X_n \eqdistr \text{Poi}(\lambda_n) + \Xi +  o(1) \mrq\mrq,
\end{equation}
where $\Xi$ is a ``random variable'' with ``moment generating function'' $\E \eee^{z \Xi} = \Psi(z)$ that is independent of $\text{Poi}(\lambda_n)$, and $o(1)$ converges to $0$ in distribution. Even though usually no random variable $\Xi$ having the required moment generating function $\Psi(z)$ exists, a lot of limit theorems for $X_n$ have the same form as they would do for the sequence of ``random variables'' $\text{Poi}(\lambda_n) + \Xi$.

The next theorem states that for fixed $k\in \N$, the random variables $(\Lah(n,k))_{n\in \N}$ converge in the mod-Poisson sense with speed $\lambda_n = k\log n$, as $n\to\infty$.
\begin{theorem}[Mod-Poisson convergence]\label{theo:mod_poi}
Let $k\in \N$ be fixed.
Then,
\begin{equation}\label{eq:mod_poi}
\lim_{n\to\infty} \frac{\E \eee^{z \Lah(n,k)}}{\eee^{(k \log n) (\eee^z - 1)}} = \frac{\Gamma(k)}{\Gamma(k \eee^z)}
\end{equation}
for every $z\in \mathcal D_{Lah}$, where $\mathcal D_{Lah} := \{t\in \C: \cos \Im t >0\}\supset \R$. Moreover, this convergence is uniform as long as $z$ stays in any compact subset $K$ of $\mathcal D_{Lah}$, and the speed of convergence is $O(n^{-\eps(K)})$ for some $\eps(K)>0$.
\end{theorem}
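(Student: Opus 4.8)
The plan is to read the statement off directly from the closed-form generating function in Lemma~\ref{lem:gener_funct_formula}. Since $\E \eee^{z\Lah(n,k)} = P_{n,k}(\eee^z)$, I would set $t := \eee^z$ and start from the exact expression~\eqref{eq:gener_funct_formula},
$$
P_{n,k}(t) = \frac{1}{\binom{n-1}{k-1}} \sum_{m=1}^k (-1)^{k-m} \binom km \frac{\Gamma(tm+n)}{\Gamma(tm)\, n!}.
$$
The crucial observation is that $\Re t = \Re \eee^z = \eee^{\Re z}\cos(\Im z)$, so that the defining condition $z \in \mathcal D_{Lah}$, namely $\cos \Im z > 0$, is exactly equivalent to $\Re t > 0$. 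This positivity is what singles out the top term $m=k$ as dominant and dictates the shape of the domain $\mathcal D_{Lah}$.

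The main step is to insert uniform Gamma asymptotics into each summand. Using the standard expansion $\Gamma(n+a)/\Gamma(n+1) = n^{a-1}(1 + O(1/n))$, valid uniformly for $a$ in compact subsets of $\C$, together with $\binom{n-1}{k-1} = \frac{n^{k-1}}{\Gamma(k)}(1 + O(1/n))$ (a degree-$(k-1)$ polynomial in $n$ with leading coefficient $1/(k-1)!$), I would show that the $m$-th term of $P_{n,k}(t)$, after division by the Poisson normalizer $\eee^{(k\log n)(\eee^z-1)} = n^{k(t-1)}$, equals
$$
(-1)^{k-m}\binom km \frac{\Gamma(k)}{\Gamma(tm)}\, n^{t(m-k)}\bigl(1 + O(1/n)\bigr).
$$
For $m=k$ the power of $n$ is $n^0=1$, contributing the limiting constant $\Gamma(k)/\Gamma(tk) = \Gamma(k)/\Gamma(k\eee^z)$; here $\Re(tm) = m\,\Re t > 0$ guarantees that $1/\Gamma(tm)$ is finite and nonzero, so no term is spurious. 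For $1 \le m \le k-1$ one has $|n^{t(m-k)}| = n^{\Re t\,(m-k)} \to 0$, since $m-k<0$ and $\Re t>0$, the largest such term ($m=k-1$) being $O(n^{-\Re t})$. Summing over $m$ then yields
$$
\frac{\E \eee^{z\Lah(n,k)}}{\eee^{(k\log n)(\eee^z-1)}} = \frac{\Gamma(k)}{\Gamma(k\eee^z)} + O\!\left(n^{-\min(1,\,\Re \eee^z)}\right),
$$
which is the claimed limit.

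For the uniformity assertion, I would let $K$ be a compact subset of $\mathcal D_{Lah}$; then $t=\eee^z$ ranges over a compact set on which $\Re t \ge c(K) > 0$, so each argument $tm$ with $m=1,\dots,k$ stays in a fixed compact subset of $\{\Re > 0\}$, and the Gamma asymptotics above hold uniformly in $z\in K$. Setting $\eps(K) := \min\{1,\ \min_{z\in K}\Re \eee^z\} > 0$ then gives the stated rate $O(n^{-\eps(K)})$, uniformly on $K$.

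I expect the only genuine care to be in the error analysis rather than in any single idea: one must confirm that the Gamma-ratio expansion is uniform over the relevant compact $t$-set, and that the subdominant terms $m<k$ are uniformly negligible. This latter point is precisely where the hypothesis $\cos\Im z>0$ enters, and it is what explains why $\mathcal D_{Lah}$ --- rather than a vertical strip around the imaginary axis --- is the natural domain of mod-Poisson convergence here.
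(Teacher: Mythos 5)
Your proposal is correct and follows essentially the same route as the paper's proof: both start from the exact expression \eqref{eq:gener_funct_formula}, apply the uniform Gamma-ratio asymptotics $\Gamma(n+\alpha)/\Gamma(n+\beta)=n^{\alpha-\beta}(1+O(1/n))$ to each of the $k$ summands, and use $\Re\,\eee^z>0$ (equivalently $\cos\Im z>0$) to show that the $m=k$ term dominates while the terms with $m<k$ contribute $O(n^{-\eps(K)})$ uniformly on compacts. Your explicit identification of the error exponent as $\min\{1,\min_{z\in K}\Re\,\eee^z\}$ is a slightly more precise bookkeeping of the same estimate.
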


The proof of Theorem~\ref{theo:mod_poi} is postponed to Section~\ref{subsec:proof_mod_poi}.

\begin{theorem}[Central limit theorem]\label{theo:clt}
Let $k\in \N$ be fixed.  Then,
$$
\frac{\Lah(n,k) - k \log n}{\sqrt{k\log n}} \todistr {\rm N}(0,1).
$$
\end{theorem}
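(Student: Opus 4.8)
The plan is to deduce the central limit theorem directly from the mod-Poisson convergence established in Theorem~\ref{theo:mod_poi}, which is precisely the sort of corollary that mod-Poisson (more generally mod-$\phi$) convergence is designed to yield. The key observation is that a Poisson random variable with large parameter $\lambda_n = k\log n$ is itself asymptotically normal after centering and scaling, and the multiplicative correction factor $\Gamma(k)/\Gamma(k\eee^z)$ in~\eqref{eq:mod_poi} is a fixed analytic function that becomes negligible after the rescaling kills the relevant derivatives. So morally, $\Lah(n,k)$ behaves like $\text{Poi}(k\log n)$ plus an independent $O(1)$ fluctuation, and the latter washes out once we divide by $\sqrt{k\log n}\to\infty$.

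Concretely, I would work with the cumulant generating function. Write $\lambda_n := k\log n$ and fix the imaginary-axis direction: for real $u$ the point $z = \ii u / \sqrt{\lambda_n}$ lies in $\mathcal D_{Lah}$ for all large $n$ (since $\Im z = u/\sqrt{\lambda_n}\to 0$, so $\cos\Im z \to 1 > 0$), so Theorem~\ref{theo:mod_poi} applies. Taking logarithms in~\eqref{eq:mod_poi}, we have
\begin{equation*}
\log \E \eee^{z\Lah(n,k)} = \lambda_n(\eee^z - 1) + \log\frac{\Gamma(k)}{\Gamma(k\eee^z)} + o(1),
\end{equation*}
uniformly for $z$ in compact subsets of $\mathcal D_{Lah}$. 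I would then substitute $z = \ii u/\sqrt{\lambda_n}$ and examine the characteristic function of the normalized variable
\begin{equation*}
\E \exp\!\left(\ii u \,\frac{\Lah(n,k) - \lambda_n}{\sqrt{\lambda_n}}\right) = \exp\!\left(-\ii u\sqrt{\lambda_n}\right)\E \eee^{z\Lah(n,k)}\Big|_{z = \ii u/\sqrt{\lambda_n}}.
\end{equation*}
For the Poisson part, Taylor expanding gives $\lambda_n(\eee^z-1) - \ii u\sqrt{\lambda_n} = \lambda_n\big(\ii u/\sqrt{\lambda_n} - u^2/(2\lambda_n) + O(\lambda_n^{-3/2})\big) - \ii u\sqrt{\lambda_n} = -u^2/2 + O(\lambda_n^{-1/2})$, which converges to $-u^2/2$. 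For the correction factor, since $\Gamma(k)/\Gamma(k\eee^z) \to \Gamma(k)/\Gamma(k) = 1$ as $z = \ii u/\sqrt{\lambda_n}\to 0$ and the function is continuous (indeed analytic) at $z=0$, its logarithm tends to $0$. Hence the characteristic function converges pointwise to $\eee^{-u^2/2}$, and L\'evy's continuity theorem yields convergence in distribution to $\mathrm{N}(0,1)$.

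The only genuine subtlety is justifying that plugging the shrinking sequence $z = \ii u/\sqrt{\lambda_n}$ into the limit~\eqref{eq:mod_poi} is legitimate, i.e.\ that the $o(1)$ error remains $o(1)$ along this moving point rather than only for each fixed $z$. This is exactly where the uniformity clause of Theorem~\ref{theo:mod_poi} is essential: the convergence is uniform on any compact $K\subset\mathcal D_{Lah}$ with an explicit rate $O(n^{-\eps(K)})$. Since the points $\ii u/\sqrt{\lambda_n}$ for fixed $u$ all lie in a fixed compact neighborhood of $0$ (e.g.\ the closed segment $\{\ii v : |v|\le |u|\}$, which sits well inside $\mathcal D_{Lah}$), the error is uniformly $O(n^{-\eps})\to 0$ there. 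Thus the substitution is valid and the expansion above is rigorous. I expect this uniformity bookkeeping to be the main obstacle, but it is handled entirely by the statement of Theorem~\ref{theo:mod_poi} that we are allowed to assume; everything else is the standard Taylor expansion of $\eee^z - 1$ together with the continuity of $\Gamma(k)/\Gamma(k\eee^z)$ at the origin.
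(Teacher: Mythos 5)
Your proposal is correct and takes essentially the same route as the paper: the paper's proof of Theorem~\ref{theo:clt} simply invokes Proposition~2.4(2) of Kowalski--Nikeghbali to pass from the mod-Poisson convergence of Theorem~\ref{theo:mod_poi} to the CLT, and your characteristic-function computation at $z=\ii u/\sqrt{\lambda_n}$ (Taylor expansion of the Poisson part, continuity of $\Gamma(k)/\Gamma(k\eee^z)$ at $z=0$, L\'evy continuity) is precisely the content of that cited proposition, with the uniformity near the origin used in the same way. The only cosmetic slip is that the segment $\{\ii v : |v|\le |u|\}$ need not lie in $\mathcal D_{Lah}$ when $|u|\ge \pi/2$; it suffices to take a fixed small closed disc around the origin inside $\mathcal D_{Lah}$, which contains $\ii u/\sqrt{\lambda_n}$ for all sufficiently large $n$.
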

\begin{proof}
The claim follows from the mod-Poisson convergence~\eqref{eq:mod_poi} by~\cite[Proposition~2.4(2)]{kowalski_nikeghbali_poisson}. Note that the cited result only requires uniformity of the convergence in a small neighborhood of the origin which is secured by~\eqref{eq:mod_poi}.
\end{proof}

All subsequent results of this section follow essentially from the corresponding general results on random profiles obtained in~\cite{kabluchko_marynych_sulzbach_edgeworth}. This reference better fits our needs since we  have uniform convergence in a horizontal strip rather than a vertical one, preventing us from referring to the standard results on the mod-$\phi$ convergence. Note that Assumptions A1--A3 of~\cite{kabluchko_marynych_sulzbach_edgeworth} can be easily verified to hold with
\begin{multline}\label{eq:kabluchko_marynych_sulzbach_edgeworth_assumptions}
\mathbb{L}_n(j)=\P[\Lah(n,k) = j],\quad  w_n=k\log n,\quad \beta_{\pm}=\pm\infty\quad \mathcal{D}=\R \times (-\pi \ii/2,+\pi \ii/2),\\
\quad\text{and}\quad \phi(\beta)=\eee^{\beta}-1,\quad W_{\infty}({\beta})=\Gamma(k)/\Gamma(k \eee^{\beta})\quad \text{for}\quad \beta\in\mathcal{D},
\end{multline}
whereas Assumption~A4 will be checked in Remark~\ref{rem:A4}.

\begin{theorem}[Local limit theorem]\label{theo:local_limit_theo}
For every fixed $k\in\N$ we have
$$
\lim_{n\to\infty} \sqrt {\log n }\sup_{m\in \Z} \left|\P[\Lah(n,k) = m] - \frac{1}{\sqrt{2\pi k \log n}} \exp\left\{ -  \frac{(m-k\log n)^2}{2
k\log n} \right\} \right|= 0.
$$
\end{theorem}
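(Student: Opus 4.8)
The plan is to deduce the local limit theorem from the mod-Poisson convergence of Theorem~\ref{theo:mod_poi} by Fourier inversion, which is exactly the route packaged in the general machinery of~\cite{kabluchko_marynych_sulzbach_edgeworth}. Since $\Lah(n,k)$ is integer-valued, its mass function is recovered from the characteristic function $P_{n,k}(\eee^{\ii\theta})$ through the inversion formula
$$
\P[\Lah(n,k)=m]=\frac{1}{2\pi}\int_{-\pi}^{\pi} P_{n,k}(\eee^{\ii\theta})\,\eee^{-\ii m\theta}\,\dd\theta .
$$
I would split this integral into the range $|\theta|<\pi/2$, where the mod-Poisson estimate applies, and the complementary \emph{far} range $\pi/2\le|\theta|\le\pi$, where it does not, and show that the former reproduces the Gaussian density while the latter is negligible on the scale $(\log n)^{-1/2}$.

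On $|\theta|<\pi/2$ one has $z=\ii\theta\in\mathcal{D}_{Lah}$, so~\eqref{eq:mod_poi} gives $P_{n,k}(\eee^{\ii\theta})\sim \eee^{(k\log n)(\eee^{\ii\theta}-1)}\,\Gamma(k)/\Gamma(k\eee^{\ii\theta})$. Because $|\eee^{(k\log n)(\eee^{\ii\theta}-1)}|=n^{k(\cos\theta-1)}$ is polynomially small once $\theta$ is bounded away from $0$, only a shrinking neighbourhood $|\theta|\le\delta$ contributes to leading order. There I would rescale $\theta=u/\sqrt{k\log n}$ and expand $\eee^{\ii\theta}-1=\ii\theta-\theta^2/2+O(|\theta|^3)$, turning the exponential factor into the Gaussian characteristic function $\exp\{\ii u\sqrt{k\log n}-u^2/2\}$ while $\Gamma(k)/\Gamma(k\eee^{\ii\theta})\to1$. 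A routine Laplace-type evaluation then yields the density $\tfrac{1}{\sqrt{2\pi k\log n}}\exp\{-(m-k\log n)^2/(2k\log n)\}$ with an error $o((\log n)^{-1/2})$, uniformly in $m\in\Z$; the integers $m$ far from $k\log n$ cause no trouble because there both the true probability and the Gaussian density are already $o((\log n)^{-1/2})$.

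The genuine obstacle is the far range $\pi/2\le|\theta|\le\pi$, which is untouched by Theorem~\ref{theo:mod_poi} and where a direct bound on $|P_{n,k}(\eee^{\ii\theta})|$ is needed. A clean way is to use the composition representation~\eqref{eq:representation_as_a_sum_over_blocks_of_composition}: conditionally on the block sizes $b_1^{(n)},\ldots,b_k^{(n)}$ the characteristic function factorizes as $\prod_{j=1}^k\phi_{b_j^{(n)}}(\eee^{\ii\theta})$, where $\phi_i(t)=\E t^{\Lah(i,1)}$. For the record generating function one computes exactly $|\phi_i(\eee^{\ii\theta})|^2=\prod_{\ell=1}^i\bigl(1-2(\ell-1)(1-\cos\theta)/\ell^2\bigr)$, whence $\log|\phi_i(\eee^{\ii\theta})|^2\le -2(1-\cos\theta)(\log i-C)$ and thus $|\phi_i(\eee^{\ii\theta})|\le C'\,i^{-(1-\cos\theta)}$. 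Since each factor has modulus at most $1$ and at least one block satisfies $b_j^{(n)}\ge n/k$, the triangle inequality gives
$$
|P_{n,k}(\eee^{\ii\theta})|\le \E\Bigl[\textstyle\prod_{j=1}^k|\phi_{b_j^{(n)}}(\eee^{\ii\theta})|\Bigr]\le C'\,(n/k)^{-(1-\cos\theta)},
$$
which for $\theta$ bounded away from $0$ is polynomially small in $n$, hence $o((\log n)^{-1/2})$ after integration over the bounded far range. Alternatively, the explicit formula of Lemma~\ref{lem:gener_funct_formula} combined with $\Gamma(tm+n)/(n!\,\Gamma(tm))\sim n^{tm-1}/\Gamma(tm)$ gives the same polynomial decay, namely $n^{-k(1-\cos\theta)}$ when $\cos\theta>0$ and $n^{\cos\theta-k}$ when $\cos\theta<0$.

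This far-range estimate is precisely Assumption~A4 of~\cite{kabluchko_marynych_sulzbach_edgeworth}, which I verify in Remark~\ref{rem:A4}. With Assumptions A1--A3 already recorded in~\eqref{eq:kabluchko_marynych_sulzbach_edgeworth_assumptions} and A4 in hand, the local limit theorem of that reference applies verbatim and yields the claim; the $\sqrt{\log n}$ prefactor is absorbed because both the central error and the far-range contribution are $o((\log n)^{-1/2})$. The main difficulty is thus entirely concentrated in the peripheral control of the characteristic function outside the strip $\mathcal{D}_{Lah}$, the central Gaussian behaviour being an immediate consequence of mod-Poisson convergence.
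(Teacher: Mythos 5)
Your proposal is correct and, at bottom, follows the same route as the paper: the paper's proof of this theorem consists of citing Theorem~2.7 of~\cite{kabluchko_marynych_sulzbach_edgeworth}, having recorded Assumptions A1--A3 in~\eqref{eq:kabluchko_marynych_sulzbach_edgeworth_assumptions} and A4 in Remark~\ref{rem:A4}, and your final step is exactly that citation. What you add is twofold. First, you unpack the Fourier-inversion mechanics hidden inside that reference (central range rescaled by $\sqrt{k\log n}$, mid range killed by $n^{k(\cos\theta-1)}$, far range $\pi/2\le|\theta|\le\pi$ treated separately); this is expository but faithful to how the cited result works. Second, and more substantively, your primary verification of the peripheral bound is genuinely different from the paper's: Remark~\ref{rem:A4} bounds $|P_{n,k}(\eee^{\beta+\ii u})|$ via the Gamma-function formula of Lemma~\ref{lem:gener_funct_formula}, whereas you condition on the composition via~\eqref{eq:representation_as_a_sum_over_blocks_of_composition}, use the exact identity $|\phi_i(\eee^{\ii\theta})|^2=\prod_{\ell=1}^i\bigl(1-2(\ell-1)(1-\cos\theta)/\ell^2\bigr)$ (which is correct, since $|\eee^{\ii\theta}+\ell-1|^2/\ell^2$ equals precisely that factor), and the pigeonhole observation that some block has size at least $n/k$. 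This probabilistic argument is more elementary and self-contained than the Gamma-asymptotics route, and it would generalize to situations where the explicit generating function is unavailable; the paper's route is shorter because Lemma~\ref{lem:gener_funct_formula} is already on the table. Both yield the required polynomial decay $O(n^{-\delta})$ off the origin, so either closes the argument.
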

\begin{proof}
In view of~\eqref{eq:mod_poi} and~\eqref{eq:kabluchko_marynych_sulzbach_edgeworth_assumptions} this follows from Theorem~2.7 of~\cite{kabluchko_marynych_sulzbach_edgeworth}. Moreover, a general Edgeworth asymptotic expansion with an arbitrary number of terms could be derived from~\cite[Theorem~2.1]{kabluchko_marynych_sulzbach_edgeworth}.
\end{proof}

An illustration of Theorems~\ref{theo:clt} and~\ref{theo:local_limit_theo} is shown in Figure~\ref{fig:clt_constant_k}.

\begin{theorem}[Precise asymptotics of large deviations]\label{theo:precise_ldp}
Let $(x_n)_{n\in \N}$ be a sequence of real numbers converging to $x>0$ and such that $kx_n\log n$ is integer for all $n\in \N$. Then,
\begin{align*}
\P[\Lah(n,k) =  k x_n \log n]
&\sim
\frac{n^{-k(x_n \log x_n -x_n+1)}}{\sqrt{2\pi k x \log n}} \frac{\Gamma(k)}{\Gamma(kx)},\\
\P[\Lah(n,k) \geq  k x_n \log n]
&\sim
\frac{x}{x-1} \frac{n^{-k(x_n \log x_n -x_n+1)}}{\sqrt{2\pi k x \log n}} \frac{\Gamma(k)}{\Gamma(kx)}, \qquad \text{ if } x>1,\\
\P[\Lah(n,k) \leq k x_n \log n]
&\sim
\frac{1}{1-x} \frac{n^{-k(x_n \log x_n -x_n+1)}}{\sqrt{2\pi k x \log n}} \frac{\Gamma(k)}{\Gamma(kx)}, \qquad \text{ if } x<1.
\end{align*}
\end{theorem}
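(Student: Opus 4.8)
The plan is to read off all three asymptotics from the general precise large deviation theorem for mod-Poisson convergent sequences in~\cite{kabluchko_marynych_sulzbach_edgeworth}, applied to the mod-Poisson convergence of $\Lah(n,k)$ already at our disposal. By Theorem~\ref{theo:mod_poi} and the identification~\eqref{eq:kabluchko_marynych_sulzbach_edgeworth_assumptions}, the sequence $(\Lah(n,k))_{n}$ satisfies the hypotheses of that reference with speed $\lambda_n=k\log n$, Poisson Laplace exponent $\phi(\beta)=\eee^{\beta}-1$, and limiting function $W_\infty(\beta)=\Psi(\beta)=\Gamma(k)/\Gamma(k\eee^{\beta})$, Assumption~A4 being verified in Remark~\ref{rem:A4}. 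The Legendre transform of the Poisson exponent is $\Lambda^\ast(y)=\sup_{\beta}(\beta y-\eee^{\beta}+1)=y\log y-y+1$, with the supremum attained at the saddle $\beta=\log y$. For a deviation at level $y=x>0$ the saddle therefore sits at $z^\ast=\log x$, where the mod-Poisson limit takes the value $\Psi(\log x)=\Gamma(k)/\Gamma(kx)$. This already matches every ingredient of the claimed formulas: the exponential rate $\eee^{-\lambda_n\Lambda^\ast(x_n)}=n^{-k(x_n\log x_n-x_n+1)}$, the $\Gamma$-prefactor $\Gamma(k)/\Gamma(kx)$, and the Gaussian normalisation discussed next.

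The mechanism behind the general theorem, which I would invoke rather than reprove, is exponential tilting at the saddle followed by a local central limit theorem for the tilted law. Tilting $\Lah(n,k)$ by $\eee^{z^\ast\,\cdot}$ recentres it at $\eee^{z^\ast}\lambda_n=x\cdot k\log n$; the tilted variable has variance asymptotic to $x\lambda_n=kx\log n$, so the local limit theorem produces the Gaussian density at its mean, namely $(2\pi\cdot kx\log n)^{-1/2}$, which is exactly the prefactor $1/\sqrt{2\pi k x\log n}$. Collecting the tilting cost $\eee^{-\lambda_n\Lambda^\ast(x_n)}$, the mod-Poisson correction $\Psi(z^\ast)$, and this Gaussian factor yields the first, pointwise, asymptotic. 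The one subtlety requiring care is that the exponent must retain $x_n$ rather than $x$: since it is multiplied by $\log n\to\infty$, even the $o(1)$ difference $x_n-x$ is not negligible there, whereas in the slowly varying $\Gamma$- and square-root prefactors $x_n$ may be replaced by its limit $x$ by continuity. The hypothesis that $kx_n\log n$ be an integer guarantees that the target point $m=kx_n\log n$ is an admissible lattice point, and the version of the precise large deviation theorem I would cite is the one allowing the deviation ratio $m/\lambda_n=x_n$ to drift with $n$ rather than being frozen at $x$.

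For the two tail asymptotics I would sum the pointwise estimate along the lattice. Near the deviation point the ratio of consecutive probabilities behaves, as for the Poisson law, like $\P[\Lah(n,k)=j+1]/\P[\Lah(n,k)=j]\approx \lambda_n/j\approx 1/x$ at $j\approx x\lambda_n$. For $x>1$ this ratio is $<1$, so summing upward gives the geometric factor $\sum_{i\geq 0}x^{-i}=x/(x-1)$ multiplying the pointwise value at $m$; for $x<1$ the downward ratios behave like $x<1$ and summing yields $\sum_{i\geq 0}x^{i}=1/(1-x)$. These are precisely the factors $x/(x-1)$ and $1/(1-x)$ appearing in the statement, and the corresponding tail formulas in~\cite{kabluchko_marynych_sulzbach_edgeworth} package this summation together with uniform control of the remainder.

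The main obstacle I anticipate is not this bookkeeping but confirming that the precise large deviation theorem applies with the \emph{horizontal} strip $\mathcal D=\R\times(-\pi\ii/2,\pi\ii/2)$ rather than the vertical strip of the classical mod-$\phi$ literature, together with the control needed to let the deviation ratio drift to $x$. Concretely, one must check that the uniformity in~\eqref{eq:mod_poi} over compact subsets of $\mathcal D_{Lah}$, with the quantitative rate $O(n^{-\eps(K)})$ furnished by Theorem~\ref{theo:mod_poi}, is exactly what feeds Assumption~A4 and hence the saddle-point and local-limit estimates; this is the content of Remark~\ref{rem:A4}. Once it is in place, the three asymptotics follow by specialising the general statement to $\lambda_n=k\log n$, $\phi(\beta)=\eee^{\beta}-1$, $\Psi(\beta)=\Gamma(k)/\Gamma(k\eee^{\beta})$, and the lattice point $m=kx_n\log n$.
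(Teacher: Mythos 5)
Your proposal is correct and follows essentially the same route as the paper: the pointwise asymptotic is read off from the precise large deviation theorem of~\cite{kabluchko_marynych_sulzbach_edgeworth} (the paper cites its Theorem~2.8 with $r=0$ and $\beta_n(kx_n\log n)=\log x_n$), using the mod-Poisson convergence of Theorem~\ref{theo:mod_poi} together with the verification of the assumptions in~\eqref{eq:kabluchko_marynych_sulzbach_edgeworth_assumptions} and Remark~\ref{rem:A4}, and the two tail estimates follow by geometric summation exactly as you describe (the paper handles the lower tail by applying the result to $-\Lah(n,k)$, which is equivalent to your downward summation).
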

\begin{proof}
The first claim follows from~Theorem 2.8 in \cite{kabluchko_marynych_sulzbach_edgeworth} applied with $r=0$, $K$ being an arbitrary segment of the real line which contains $x$, and $\beta_n(kx_n\log n)=\log x_n$. The second claim follows by  summation, c.f.~\cite[Theorem~3.2.2]{feray_meliot_nikeghbali_book}. The last claim follows from the same results applied to  $-\Lah (n,k)$.
\end{proof}

\begin{proposition}[Location of the mode]\label{prop:mode}
For every fixed $k\in\N$ there is $N_1\in \N$ such that for all integer $n>N_1$, all maximizers of the function $m\mapsto \P[\Lah(n,k) = m]$ are among the following two numbers:
$$
\left\lfloor k\log n - \frac{k\Gamma'(k)}{\Gamma(k)} -\frac 12 \right\rfloor, \quad \left \lceil k\log n - \frac{k\Gamma'(k)}{\Gamma(k)} -\frac 12\right\rceil.
$$
\end{proposition}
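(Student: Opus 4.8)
The plan is to combine unimodality with a local expansion of the point probabilities that is precise enough to resolve the mode to within a single integer. By Corollary~\ref{cor:unimodal} the sequence $m\mapsto\P[\Lah(n,k)=m]$ is unimodal, so its set of maximizers is a block of consecutive integers, and it suffices to control the ratio
$$
R_n(m):=\frac{\P[\Lah(n,k)=m+1]}{\P[\Lah(n,k)=m]},
$$
which by unimodality is $\ge 1$ below the mode and $\le 1$ above it. Thus the maximizers are exactly the $m$ with $R_n(m-1)\ge 1\ge R_n(m)$, and the whole task reduces to locating, to within one unit, where $\log R_n(m)$ changes sign. Throughout I write $m^\ast:=k\log n-\tfrac{k\Gamma'(k)}{\Gamma(k)}$, noting $m^\ast=k\log n+O(1)$ since $k$ is fixed.

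First I would localize the mode. The local limit theorem (Theorem~\ref{theo:local_limit_theo}) gives $\P[\Lah(n,k)=m]\le \tfrac{1}{\sqrt{2\pi k\log n}}\exp\{-(m-k\log n)^2/(2k\log n)\}+o((\log n)^{-1/2})$ uniformly in $m\in\Z$, while at the center the probability is $\tfrac{1}{\sqrt{2\pi k\log n}}(1+o(1))$. Choosing a constant $C$ with $e^{-C^2/2}<\tfrac12$, any $m$ with $|m-k\log n|>C\sqrt{\log n}$ has strictly smaller probability than the central value for all large $n$; by unimodality this forces every maximizer into the window $W_n:=\{m:|m-k\log n|\le C\sqrt{\log n}\}$. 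Inside $W_n$ the associated saddle point is $o(1)$, which legitimizes the expansion below.

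The core step is a precise expansion of $\log R_n(m)$ on $W_n$. Here I would apply the Edgeworth expansion for random profiles from \cite{kabluchko_marynych_sulzbach_edgeworth}, whose hypotheses were recorded in~\eqref{eq:kabluchko_marynych_sulzbach_edgeworth_assumptions} and Remark~\ref{rem:A4} with speed $w_n=k\log n$, $\phi(\beta)=\eee^\beta-1$ and limiting function $\Psi(z)=\Gamma(k)/\Gamma(k\eee^z)$ coming from Theorem~\ref{theo:mod_poi}; equivalently one runs the saddle-point method directly on the mod-Poisson approximation $\E\eee^{z\Lah(n,k)}\approx \eee^{(k\log n)(\eee^z-1)}\Psi(z)$. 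Tilting the Fourier inversion by the real saddle $\beta_m$ solving $(k\log n)\eee^{\beta_m}+(\log\Psi)'(\beta_m)=m$ yields, uniformly on $W_n$,
$$
g(m):=\log\P[\Lah(n,k)=m]=\Big[(k\log n)(\eee^{\beta_m}-1)+\log\Psi(\beta_m)-\beta_m m\Big]-\tfrac12\log\!\big(2\pi (k\log n)\eee^{\beta_m}\big)+o(1).
$$
The envelope identity gives $g'(m)=-\beta_m-\tfrac12\tfrac{\dd\beta_m}{\dd m}$, and since $\tfrac{\dd\beta_m}{\dd m}\sim 1/(k\log n)$ while $\beta_m=0$ exactly at $m=m^\ast$ (because $(\log\Psi)'(0)=-k\Gamma'(k)/\Gamma(k)$), the unique continuous maximizer $m_0$ of $g$ satisfies $\beta_{m_0}\sim-\tfrac{1}{2k\log n}$, hence $m_0=m^\ast-\tfrac12+o(1)$. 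The decisive feature is that the shift by $-\tfrac12$ comes \emph{entirely} from the Gaussian prefactor $-\tfrac12\log(2\pi(k\log n)\eee^{\beta_m})$: its dependence on the saddle contributes the extra $-\tfrac{1}{2k\log n}$ to $g'$ that moves the maximizer from $m^\ast$ to $m^\ast-\tfrac12$.

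Finally I would pass to the discrete statement. The midpoint rule gives $\log R_n(m)=g(m+1)-g(m)=g'(m+\tfrac12)+O((\log n)^{-2})$, and expanding $g'$ around $m_0$ with $g''(m_0)=-\tfrac{1}{k\log n}(1+o(1))$ and $m+\tfrac12-m_0=m-m^\ast+1+o(1)$ yields $\log R_n(m)=-\tfrac{m-m^\ast+1}{k\log n}+o\!\big(\tfrac1{\log n}\big)$ uniformly on $W_n$. Hence $\log R_n(m)\le 0$ forces $m\ge m^\ast-1-o(1)$ and $\log R_n(m-1)\ge 0$ forces $m\le m^\ast+o(1)$, so every maximizer lies in $[m^\ast-1-o(1),\,m^\ast+o(1)]$, an interval with center $m^\ast-\tfrac12$ and radius $\tfrac12+o(1)$. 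For $n$ large the only integers it can contain are $\lfloor m^\ast-\tfrac12\rfloor$ and $\lceil m^\ast-\tfrac12\rceil$ (a short case check when $m^\ast-\tfrac12$ is near a half-integer or an integer shows the $o(1)$ slack admits no third integer), which is precisely the asserted pair. \textbf{The main obstacle is the uniform error control:} since we must resolve the mode to within one unit inside a distribution of width $\sqrt{\log n}$, the remainder in $g$ has to be differenced down to $o(1/\log n)$, which is exactly why a genuine Edgeworth/saddle-point analysis—including the minor-arc bounds packaged in Assumption~A4 of \cite{kabluchko_marynych_sulzbach_edgeworth}—is needed rather than the central or local limit theorems alone.
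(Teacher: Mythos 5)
Your proposal is correct and follows essentially the same route as the paper: the paper's proof of Proposition~\ref{prop:mode} is a one-line appeal to Theorem~2.11 of \cite{kabluchko_marynych_sulzbach_edgeworth} (with Assumptions A1--A4 already verified via Theorem~\ref{theo:mod_poi} and Remark~\ref{rem:A4}), and what you write is precisely a sketch of that theorem's saddle-point/Edgeworth proof specialized to $\lambda_n=k\log n$ and $\Psi(z)=\Gamma(k)/\Gamma(k\eee^z)$, correctly identifying the $-\tfrac12$ shift from the Gaussian prefactor and the $-k\Gamma'(k)/\Gamma(k)$ shift from $(\log\Psi)'(0)$.
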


\begin{proof}
This follows from Theorem 2.11 of~\cite{kabluchko_marynych_sulzbach_edgeworth}.
\end{proof}

\begin{remark}
A random variable $X^{\theta}_n$ has the Ewens or the Karamata-Stirling distribution with parameters $n\in\N$ and $\theta>0$ if
$$
\P[X^{\theta}_n = j] = \frac{\theta^j}{\theta(\theta+1) \cdots (\theta + n - 1)} \stirling{n}{j}, \qquad j\in \{1,\ldots, n\}.
$$
It is well known that $X^{\theta}_n$ has the same distribution as the number of cycles in the Ewens random permutation. Coincidentally, if $\theta=k$ happens to be integer, the sequence $(X^{\theta}_n)_{n\in \N}$ satisfies the same mod-Poisson convergence as $(\Lah (n,k))_{n\in\N}$; see~\cite{kabluchko_marynych_sulzbach_mode}. Let us mention that the Lah distribution could be generalized by introducing an additional parameter $\theta>0$ (the probability that the random variable $\Lah(n,k;\theta)$ takes the value $j$ is by definition  proportional to $\theta^j\stirling{n}{j} \stirlingsec{j}{k}$, for $j\in \{k,\ldots,n\}$). Most of our results could  be  generalized to arbitrary $\theta>0$, but since we have no applications for this general setting,  we restrict ourselves to the case $\theta=1$.

\end{remark}

\begin{figure}[t]
\begin{center}
\includegraphics[width=0.7\textwidth ]{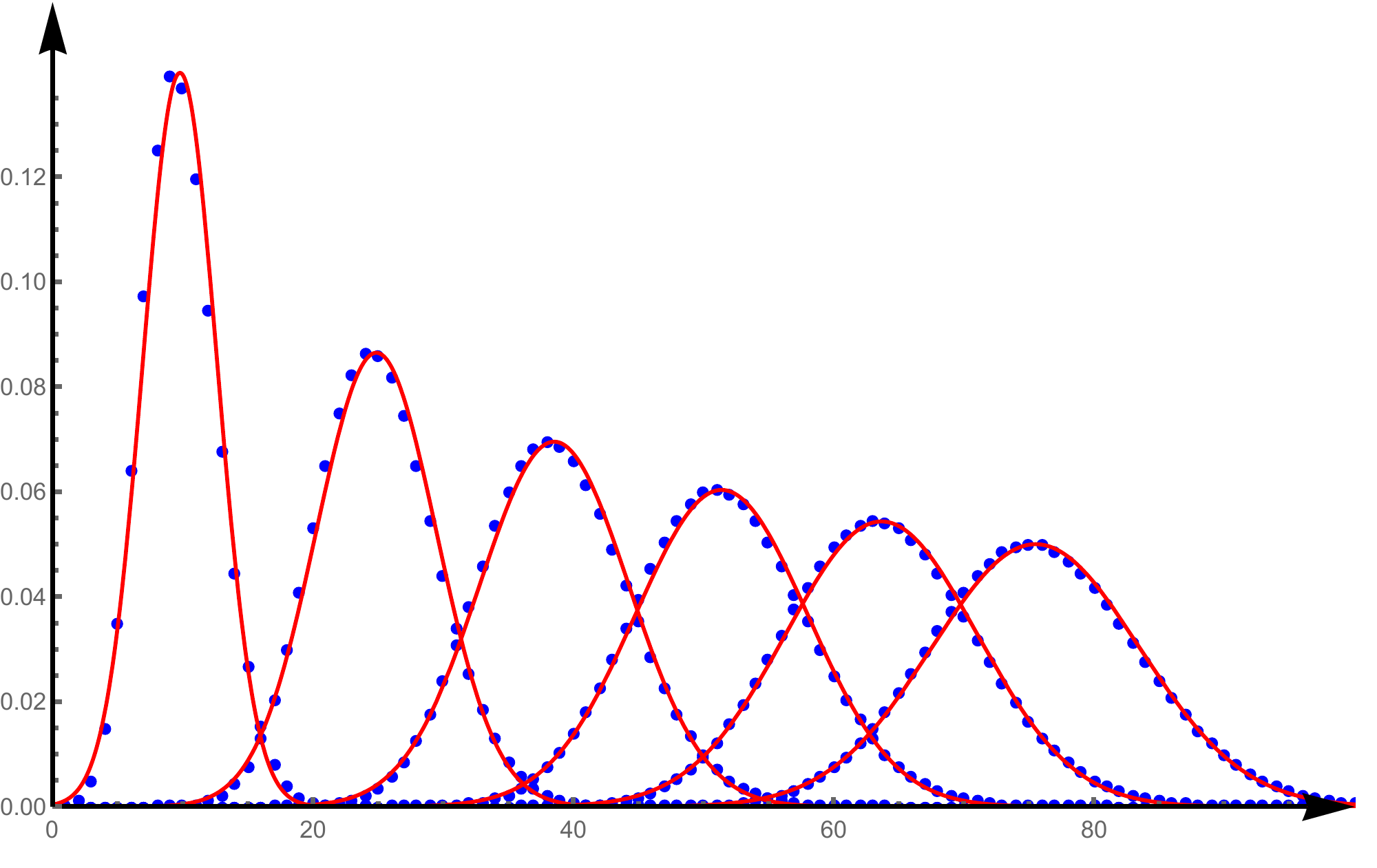}
\end{center}
\caption
{
The probability mass functions of Lah distributions with $n=10000$ and $k\in \{1,3,5,\ldots,11\}$ (blue dots) together with the approximating normal densities (red curves); see Theorems~\ref{theo:clt} and~\ref{theo:local_limit_theo}. The parameters of the normal densities are the true expectation and variance of the Lah distribution (for the normalization given in Theorems~\ref{theo:clt} and~\ref{theo:local_limit_theo}, the convergence is rather slow).
}
\label{fig:clt_constant_k}
\end{figure}

\subsection{Proof of Theorem~\ref{theo:mod_poi}}\label{subsec:proof_mod_poi}
Recall the definition of $P_{n,k}(t)$ given in~\eqref{eq:P_n_k_def}. We need to prove that
$$
\lim_{n\to\infty} \frac{P_{n,k}(\eee^z)}{n^{k (\eee^z - 1)}} = \frac{\Gamma(k)}{\Gamma(k \eee^z)}.
$$
Since the number of summands in~\eqref{eq:gener_funct_formula} is fixed, we can consider the asymptotics of each summand separately. For every $m\in \{1,\ldots,k\}$, the $m$-th summand with $t= \eee^z$  satisfies
$$
(-1)^{k-m} \binom k m  \frac{\Gamma(m\eee^z + n)}{\Gamma(m\eee^z) n!}
=
(-1)^{k-m} \binom k m \frac{n^{m \eee^z  - 1}}{\Gamma(m \eee^z)} (1+ O(1/n))
$$
by the formula $\Gamma(n + \alpha)/\Gamma(n+\beta) = n^{\alpha - \beta}(1+ O(1/n))$ as $n\to\infty$, which holds locally uniformly in $\alpha,\beta\in\C$, see, for example, Theorem in \cite{Fields:1970}. If $z$ stays in a compact subset $K$ of $\mathcal{D}_{Lah} = \{t\in \C: \cos \Im t >0\}$, then $\Re \eee^z > \eps(K)>0$ stays bounded away from $0$ for some sufficiently small $\eps(K)\in (0,1)$. It follows that the summand with $m=k$ dominates in the following sense:
$$
P_{n,k}(\eee^z)
=
\frac {1}{\binom {n-1}{k-1}} \left(\frac{n^{ k \eee^z  - 1}}{\Gamma(\eee^zk)}
(1+O(1/n))
+
\sum_{m=1}^{k-1} (-1)^{k-m} \binom k m  \frac{n^{m \eee^z  - 1}}{\Gamma(m \eee^z)} (1+ O(1/n))
\right)
=
\frac{\Gamma(k)}{\Gamma(k \eee^z)} n^{k (\eee^z - 1)} (1 + O(n^{-\eps(K)})),
$$
which proves the claim.  Observe that in the case when $\cos \Im z \leq 0$ this argument does not apply.
\hfill $\Box$

\begin{remark}\label{rem:A4}
Assumption A4 of~\cite{kabluchko_marynych_sulzbach_edgeworth} can be verified in a similar way by observing that given $a\in (0,\pi)$ and a compact set $K\subset \R$, for all $\beta\in K$ it holds that
$$
n^{-k(\eee^\beta-1)}\int_a^\pi |P_{n,k}(\eee^{\beta + \ii u})| \dd u
\leq
C n^{-k(\eee^\beta-1)} \sum_{m=1}^k \int_a^\pi \frac{\left|\Gamma(\eee^{\beta + \ii u} m + n)\right|}{n!n^{k-1}} \dd u
\leq
C \sum_{m=1}^k \int_a^\pi |n^{\eee^{\beta + \ii u} m - \eee^{\beta}k}|\dd u
\leq
C n^{- \delta}
$$
for some constants $C= C(K,k)>0$ and $\delta= \delta(K,k,a)>0$.
\end{remark}

\subsection{Strong law of large numbers}
For the P\'olya urn coupling $(Y_{n,k})_{n=k,k+1,\ldots}$ constructed in Section~\ref{subsec:polya}, where $k$ is fixed, the following strong law of large numbers holds.

\begin{proposition}\label{prop:slln}
For every fixed $k\in\N$ we have
$$
\frac{Y_{n,k}}{\log n}\toas k.
$$
\end{proposition}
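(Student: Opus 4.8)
The plan is to decompose $Y_{n,k}$ according to the colour of each ball and thereby reduce the claim to the classical strong law for records within a single colour, composed with the strong law of large numbers for the Pólya urn proportions. Write $R_j^{(n)}$ for the number of local records among the balls of colour $j$ present when the urn contains $n$ balls, so that $Y_{n,k}=\sum_{j=1}^k R_j^{(n)}$. By the very definition of a local record, $R_j^{(n)}$ is exactly the number of records in the sequence of labels carried by the colour-$j$ balls, listed in their order of arrival (the initial ball of colour $j$ being the first, and hence a trivial record).

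The first and most delicate point is that, within each colour, the arriving labels form an i.i.d.\ uniform sequence that is independent of the drawing mechanism. Indeed, the labels $U_1,U_2,\ldots$ are i.i.d.\ $\Unif[0,1]$ and play no role in deciding which ball is drawn, so the colour-assignment process is independent of $(U_n)_{n\in\N}$. Consequently, conditionally on the entire colour process, the labels attached to the colour-$j$ balls are i.i.d.\ uniform, independent across colours. Denoting them in arrival order by $V_1^{(j)},V_2^{(j)},\ldots$ and letting $\tilde R_j(m)$ be the number of records among $V_1^{(j)},\ldots,V_m^{(j)}$, I obtain the key identity $R_j^{(n)}=\tilde R_j(b_j^{(n)})$, where $b_j^{(n)}$ is the colour-$j$ ball count at time $n$.

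I would then invoke two classical strong laws. First, Rényi's theorem: for an i.i.d.\ sequence with continuous marginals the record indicators are independent with success probabilities $1/i$, so by Kolmogorov's criterion $\tilde R_j(m)/\log m\toas 1$ for each $j$ (this upgrades to almost sure convergence the $k=1$ Bernoulli representation already recorded in Section~\ref{subsec:compositions_aldous_coupling}). Second, the counts $(b_1^{(n)},\ldots,b_k^{(n)})$ evolve as a standard $k$-colour Pólya urn, each colour starting with one ball, so the proportions converge almost surely: $b_j^{(n)}/n\toas W_j$, where $(W_1,\ldots,W_k)$ is Dirichlet$(1,\ldots,1)$-distributed and in particular $W_j>0$ a.s. Hence $b_j^{(n)}\to\infty$ and $\log b_j^{(n)}=\log n+\log W_j+o(1)$, giving $\log b_j^{(n)}/\log n\toas 1$.

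Combining these, for each fixed $j$ I substitute the random index $m=b_j^{(n)}\to\infty$ into the almost sure limit for $\tilde R_j$, obtaining
$$
\frac{R_j^{(n)}}{\log n}=\frac{\tilde R_j(b_j^{(n)})}{\log b_j^{(n)}}\cdot\frac{\log b_j^{(n)}}{\log n}\toas 1,
$$
and summing over $j=1,\ldots,k$ yields $Y_{n,k}/\log n\toas k$. The step requiring the most care is the reduction in the second paragraph: one must justify that, after conditioning on the colour-assignment process, the within-colour labels are genuinely i.i.d.\ uniform, so that each record process $\tilde R_j$ is independent of the urn dynamics governing the random index $b_j^{(n)}$; this independence is precisely what legitimises the time-change substitution above. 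Everything else is a routine application of the two strong laws.
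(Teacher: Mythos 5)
Your argument is correct, but it takes a genuinely different route from the paper's. The paper works with the full count $Y_{n,k}$ directly: it differentiates the mod-Poisson convergence of Theorem~\ref{theo:mod_poi} to get $\E\Lah(n,k)\sim\Var\Lah(n,k)\sim k\log n$, applies Chebyshev's inequality and Borel--Cantelli along the sparse subsequence $\lfloor\eee^{m^2}\rfloor$, and then passes to the full sequence by a sandwich argument that uses precisely the monotonicity $Y_{n,k}\leq Y_{n+1,k}$ built into the P\'olya urn coupling. You instead decompose $Y_{n,k}=\sum_{j=1}^{k}R_j^{(n)}$ by colour, use the independence of the labels from the drawing mechanism to identify each within-colour label sequence as i.i.d.\ uniform (and the colours as mutually independent), and then compose R\'enyi's strong law for records with the almost sure convergence $b_j^{(n)}/n\to W_j$ of the P\'olya urn proportions, where $W_j$ is Beta$(1,k-1)$-distributed and hence strictly positive a.s.\ --- a fact your argument genuinely needs, since $\log b_j^{(n)}/\log n\to 1$ would fail on the event $\{W_j=0\}$. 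Both proofs are sound. Yours bypasses the mod-Poisson machinery entirely, resting on two classical strong laws, and is in a sense more informative: it localises the asymptotics colour by colour and exhibits the source of the $k\log n$ via $\log b_j^{(n)}=\log n+\log W_j+o(1)$. The paper's proof is softer: it uses nothing about the internal structure of the urn beyond monotonicity and the first two moments, so it transfers to any monotone coupling whose variance is of the order of its mean. One small point worth making explicit in your write-up: the substitution $m=b_j^{(n)}$ into the almost sure limit $\tilde R_j(m)/\log m\to 1$ is legitimate purely pathwise, because both full-measure events can be intersected and $b_j^{(n)}\to\infty$ a.s.; the conditional independence you emphasise is needed one step earlier, to guarantee that $(\tilde R_j(m))_{m\geq 1}$ really is distributed as the record process of an i.i.d.\ uniform sample.
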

\begin{proof}
Differentiating~\eqref{theo:mod_poi} and plugging $z=0$, which is legitimate since the convergence is uniform in a neighborhood of the origin, we obtain
$$
\lim_{n\to\infty}\frac{\E \Lah(n,k)}{k\log n}=\lim_{n\to\infty}\frac{\Var \Lah(n,k)}{k\log n}=1.
$$
Thus, by Chebyshev's inequality, for every $\eps>0$,
$$
\P\left[\left|\frac{Y_{n,k}}{\E Y_{n,k}}-1\right|>\eps\right]\leq \frac{\Var \Lah(n,k)}{\eps^2 (\E \Lah(n,k))^2}\sim\frac{\eps^{-2}}{k\log n},\quad n\to\infty.
$$
By the Borel-Cantelli lemma,
$$
\frac{Y_{\lfloor \eee^{n^2}\rfloor,k}}{\E Y_{\lfloor \eee^{n^2}\rfloor,k}}\toas 1.
$$
The result now follows from the standard sandwich argument using monotonicity of $(Y_{n,k})_{n=k,k+1,\ldots}$. Indeed, for every $m\geq 3$ there exists $n\in\N$ such that $\lfloor \eee^{n^2}\rfloor  \leq m<\lfloor \eee^{(n+1)^2}\rfloor $. Therefore,
$$
\frac{Y_{\lfloor \eee^{n^2}\rfloor,k}}{\E Y_{\lfloor \eee^{n^2}\rfloor,k}}\frac{\E Y_{\lfloor \eee^{n^2}\rfloor,k}}{\E Y_{\lfloor \eee^{(n+1)^2}\rfloor,k}} \leq \frac{Y_{m,k}}{\E Y_{m,k}}\leq \frac{Y_{\lfloor \eee^{(n+1)^2}\rfloor,k}}{\E Y_{\lfloor \eee^{(n+1)^2}\rfloor,k}}\frac{\E Y_{\lfloor \eee^{(n+1)^2}\rfloor,k}}{\E Y_{\lfloor \eee^{n^2}\rfloor,k}}.
$$
Sending $n\to\infty$ completes the proof.
\end{proof}


\section{Limit theorems for the Lah distribution: regimes of growing \texorpdfstring{$k$}{k}}\label{sec:growing_k}
Throughout this section we assume that $n\to\infty$ and $k=k(n)\to \infty$. There are two main regimes: the \textit{central regime} in which
\begin{equation}\label{eq:k_sim_alpha_n}
\lim_{n\to\infty} \frac{k(n)} n = \alpha \qquad \text{ for some constant } \alpha \in (0,1),
\end{equation}
and the \textit{intermediate regime}, in which $k(n) = o(n)$. We begin with the central regime.

\subsection{Central limit theorem in the central regime}
\begin{theorem}[CLT in the central regime]\label{thm:clt_central_regime}
Assume~\eqref{eq:k_sim_alpha_n}. Then,
$$
\frac{\Lah(n,k) - \E \Lah(n,k)}{\sqrt n} \todistr {\rm N}\left(0,-\left(\frac{\alpha}{1-\alpha} + \frac{\alpha(\alpha+1)\log \alpha}{(1-\alpha)^2} + \frac{\alpha^2 \log^2 \alpha}{(1-\alpha)^3}\right)\right),
$$
where ${\rm N}(m,\sigma^2)$ denotes a normal random variable with mean $m\in\R$ and variance $\sigma^2>0$.
\end{theorem}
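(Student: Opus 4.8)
The plan is to exploit the representation~\eqref{eq:representation_as_a_sum_over_blocks_of_composition}, writing $\Lah(n,k)\eqdistr\sum_{j=1}^{k}Z^{(j)}_{b^{(n)}_j}$ as a sum of block record-counts over the parts of a uniform random composition of $n$ into $k$ summands, and to remove the dependence created by the constraint $b^{(n)}_1+\cdots+b^{(n)}_k=n$ through a standard de-conditioning device. Recall that a uniform random composition of $n$ into $k$ parts is the law of $k$ independent geometric variables on $\{1,2,\ldots\}$ conditioned on their sum being $n$. Accordingly, set $p:=k/n$ (so $p\to\alpha$), let $G_1,\ldots,G_k$ be i.i.d.\ with $\P[G_1=m]=p(1-p)^{m-1}$, and, given $G_j=m$, let $W_j$ be distributed as the number of records in a sample of size $m$, i.e.\ as $\Lah(m,1)$, the pairs $(G_j,W_j)$ being independent. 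Then the conditional law of $\sum_{j=1}^{k}W_j$ given $\{\sum_{j=1}^{k}G_j=n\}$ is exactly that of $\Lah(n,k)$, and the choice $p=k/n$ places the conditioning precisely at the mean $\E[\sum_j G_j]=n$, so that no exponential tilting is required.

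This reduces the theorem to a conditional central limit theorem for the first coordinate of a sum of i.i.d.\ integer vectors $(W_j,G_j)$, conditioned on the second coordinate sum. I would first record the single-pair moments $\E G_1=1/p$, $\Var G_1=(1-p)/p^2$, and, with $H_m$ the harmonic number, $\E[W_1\mid G_1]=H_{G_1}$, $\Var[W_1\mid G_1]=H_{G_1}-H_{G_1}^{(2)}$. A two-dimensional central limit theorem gives joint asymptotic normality of $(\sum_j W_j,\sum_j G_j)$ after centering and scaling by $\sqrt{k}$; combined with a lattice local limit theorem for $\sum_j G_j$ this yields, conditionally on $\{\sum_j G_j=n\}$ and hence for $\Lah(n,k)$ itself,
\[
\frac{\Lah(n,k)-\E\Lah(n,k)}{\sqrt{k}}\todistr {\rm N}\!\left(0,\ \sigma^2_{W\mid G}\right),
\qquad
\sigma^2_{W\mid G}:=\Var W_1-\frac{(\Cov(W_1,G_1))^2}{\Var G_1},
\]
the centering being automatic because the conditional mean of $\sum_j W_j$ equals $\E\Lah(n,k)$. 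Since $\sqrt{k/n}\to\sqrt{\alpha}$, replacing the normalization $\sqrt{k}$ by $\sqrt{n}$ turns the limiting variance into $\sigma^2=\alpha\,\sigma^2_{W\mid G}$, to be evaluated at $p=\alpha$.

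It then remains to compute $\sigma^2_{W\mid G}$ in closed form. Here I would use the generating identities $\sum_{m\geq1}H_m x^m=-\log(1-x)/(1-x)$, $\sum_{m\geq1}mH_m x^m=x(1-\log(1-x))/(1-x)^2$, $\sum_{m\geq1}H_m^{(2)}x^m=\operatorname{Li}_2(x)/(1-x)$ and $\sum_{m\geq1}H_m^2 x^m=(\log^2(1-x)+\operatorname{Li}_2(x))/(1-x)$, all evaluated at $x=1-p$, to obtain closed forms for $\E H_{G_1}$, $\E[G_1 H_{G_1}]$, $\E H_{G_1}^{(2)}$ and $\E H_{G_1}^2$. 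A pleasant feature is that the dilogarithm terms in $\Var W_1=\E[H_{G_1}-H_{G_1}^{(2)}]+\Var H_{G_1}$ cancel, leaving only rational functions of $p$ and powers of $\log p$; as a consistency check, $\E W_1=-\log p/(1-p)$ recovers the mean asymptotics of Theorem~\ref{theo:expect_asympt}. Substituting the resulting expressions for $\Var W_1$, $\Cov(W_1,G_1)=\Cov(G_1,H_{G_1})$ and $\Var G_1$ into $\alpha\,\sigma^2_{W\mid G}$ and putting $p=\alpha$ yields, after simplification, precisely the variance in the statement.

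The main obstacle is the conditional CLT step, i.e.\ upgrading joint asymptotic normality to a statement about the conditional law given the lattice event $\{\sum_j G_j=n\}$. This needs a two-dimensional local limit theorem together with uniform control of the joint characteristic function of $(W_1,G_1)$ away from the origin on the torus, so that the Fourier inversion representing the conditional distribution localizes correctly; one must also check the non-degeneracy $\Var G_1>0$ and that $(W_1,G_1)$ is not concentrated on a proper affine sublattice, both of which are clear here. An alternative route avoiding explicit conditioning is a saddle-point analysis of $\binom{n-1}{k-1}P_{n,k}(\eee^{s})=[x^n]((1-x)^{-\eee^{s}}-1)^k$ via Lemma~\ref{lem:gener_funct_formula}, with saddle $x_0\to1-\alpha$, feeding Hwang's quasi-powers theorem; this is equally viable but the extraction of $\sigma^2$ from the second derivative at $s=0$ is comparably lengthy.
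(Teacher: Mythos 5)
Your proposal is correct, and it reaches the paper's variance by a genuinely different organization of the argument. The paper also starts from the representation~\eqref{eq:representation_as_a_sum_over_blocks_of_composition}, but it regroups the sum by \emph{block sizes}: it proves a multivariate CLT for the counts $N_j^{(n)}$ of blocks of size $j$ (Theorem~\ref{thm:clt_compositions}, itself established in the Appendix via the geometric-conditioned-on-sum representation and Holst's Fourier-inversion method), couples this with Donsker's theorem for the partial-sum processes $\widehat Z^{(j)}$, and then controls the tail $j>M$ by a moment/negative-dependence truncation argument before computing the limiting variance as an infinite series in $H_j$, $H_j^{(2)}$ and $p_j=\alpha(1-\alpha)^{j-1}$. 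You instead keep the sum indexed by \emph{blocks}, de-condition once, and apply a single bivariate conditional CLT to the i.i.d.\ pairs $(W_j,G_j)$ given $\{\sum_j G_j=n\}$; this avoids the infinite-dimensional limit and the truncation step entirely, at the price of needing the conditioned-sum CLT in full for an \emph{unbounded} first coordinate (the Appendix applies Holst's theorem only to bounded functionals $f_n(G_j)$, so you must additionally check the integrability/uniformity hypotheses for $W_1$ — harmless here since $W_1\leq G_1$ and $G_1$ is geometric with parameter bounded away from $0$ and $1$, but it is the one point where your argument requires more than a citation). Your variance $\alpha\bigl(\Var W_1-\Cov(W_1,G_1)^2/\Var G_1\bigr)$ is algebraically identical to the paper's series: writing $\E\bigl[H_{G_1}(\alpha G_1-1)\bigr]=\alpha\Cov(G_1,H_{G_1})$ and $\Var G_1=(1-\alpha)/\alpha^2$ turns the paper's correction term $\frac{1}{1-\alpha}\bigl(\sum_j H_jp_j(\alpha j-1)\bigr)^2$ into exactly your regression term $\Cov(W_1,G_1)^2/\Var G_1$, and the dilogarithm cancellation you observe is the same one the paper carries out when it evaluates $\sum_j H_j^2p_j-\sum_j H_j^{(2)}p_j=\log^2\alpha/(1-\alpha)$. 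The centering point you gloss over ($k\E W_1=\E\Lah(n,k)+o(\sqrt n)$) does need a word of justification, e.g.\ via Lemma~\ref{eq:centerings_equivalent} or uniform integrability in the conditional CLT, but it is routine. Net effect: your route is shorter and more self-contained as a proof of this one theorem; the paper's route is longer but isolates Theorem~\ref{thm:clt_compositions} as a reusable statement of independent interest.
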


\begin{figure}[t]
\begin{center}
\includegraphics[width=0.7\textwidth ]{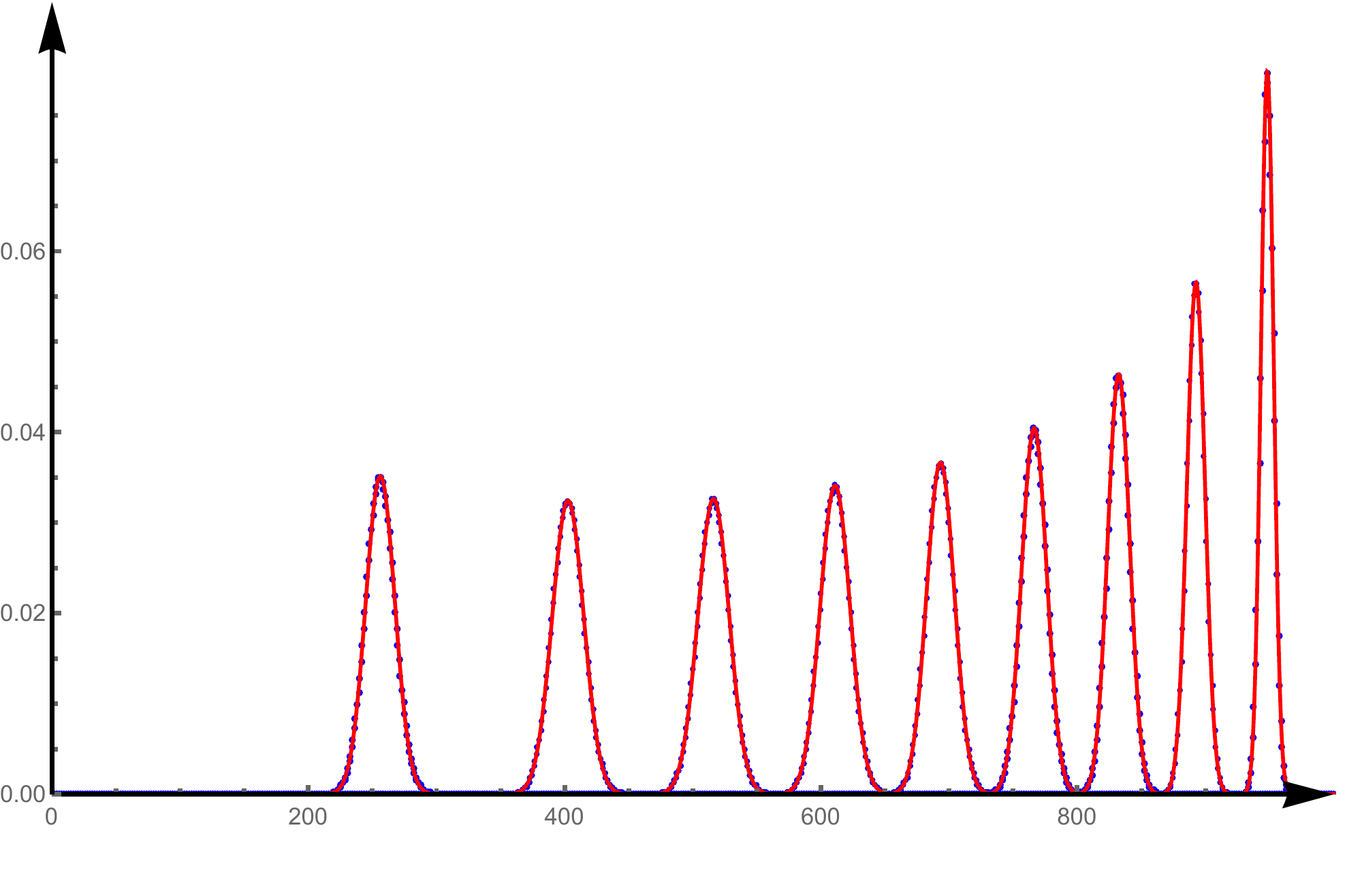}
\end{center}
\caption
{
The probability mass functions of  Lah distributions with $n=1000$ and $k=\alpha n$  with $\alpha \in \{\frac 1 {10}, \frac 2{10}, \ldots, \frac{9}{10}\}$ (blue dots) together with the approximating normal densities (red curves).
}
\label{fig:clt_linear_k}
\end{figure}
\begin{remark}
Despite of the minus sign in front of the formula for the variance, the latter is positive. For $\alpha\to 0$ and $\alpha\to 1$ the variance vanishes; the maximal value is attained at $\alpha =0.23517\ldots$. An illustration of Theorem~\ref{thm:clt_central_regime} is shown in Figure~\ref{fig:clt_linear_k}.
\end{remark}

The proof of Theorem~\ref{thm:clt_central_regime} relies on the representation~\eqref{eq:representation_as_a_sum_over_blocks_of_composition} and a multivariate central limit theorem for the number of blocks of fixed sizes in a uniform random composition $(b_1^{(n)},\ldots,b_k^{(n)})$ of $n$.
For $j\in\N$, let us denote by $N_j^{(n)}$ the number of blocks of size $j$ in the composition $(b_1^{(n)},\ldots,b_k^{(n)})$. Thus,
$$
N_j^{(n)}:=\sum_{i=1}^{k}\1_{\{b_i^{(n)}=j\}}.
$$
Note that, by~\eqref{eq:b_1_n_distribution}, $\E N_j^{(n)}=k\P[b_1^{(n)}=j]=k\binom{n-1}{k-1}^{-1}\binom{n-j-1}{k-2}\sim k\alpha(1-\alpha)^{j-1}$, where the asymptotic equivalence holds whenever~\eqref{eq:k_sim_alpha_n} is in force. In particular, this implies that under assumption~\eqref{eq:k_sim_alpha_n}, the random variables $b_1^{(n)}$ (and, thus $b_i^{(n)}$ for every fixed $i=1,\ldots,k$) converge in distribution, as $n\to\infty$, to a geometric law on $\N$ with success probability $\alpha$, see~\cite[Section~4]{diaconis_freedman} for much stronger results.

\begin{theorem}[Central limit theorem for $(N_1^{(n)},N_2^{(n)},\ldots,)$]\label{thm:clt_compositions}
Assume~\eqref{eq:k_sim_alpha_n}. Then, as $n\to\infty$,
$$
\left(\frac{N_j^{(n)}-k\P[b_1^{(n)}=j]}{\sqrt{k}}\right)_{j\geq 1}\todistr \left(\mathcal{N}_j\right)_{j\geq 1},
$$
in $\R^{\infty}$ endowed with the product topology, where $(\mathcal{N}_j)_{j\geq 1}$ is a centred Gaussian vector with the covariance
$$
\Cov(\mathcal{N}_i,\mathcal{N}_j)=\E \mathcal{N}_i\mathcal{N}_j=p_i\1_{\{i=j\}}-p_i p_j-\frac{p_i p_j}{1-\alpha}(\alpha i -1)(\alpha j-1),\quad i,j\in\N,
$$
and  $p_j:=\alpha(1-\alpha)^{j-1}$, for $j\in\N$.
\end{theorem}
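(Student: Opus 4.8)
The plan is to realize the uniform random composition as a sequence of independent geometric random variables conditioned on their total, and then to read off the stated covariance from the classical conditional central limit theorem. First I would recall the standard fact that if $G_1,G_2,\ldots$ are i.i.d.\ on $\N$ with the geometric law $\P[G_1=j]=(1-p)^{j-1}p$, then, conditionally on $\{G_1+\cdots+G_k=n\}$, the vector $(G_1,\ldots,G_k)$ is uniform over all compositions of $n$ into $k$ parts, for \emph{any} $p\in(0,1)$; indeed every such composition receives the same probability $(1-p)^{n-k}p^k$. Writing $T_k:=G_1+\cdots+G_k$ and $\tilde N_j:=\sum_{i=1}^k\1_{\{G_i=j\}}$, this gives the exact distributional identity $(N_1^{(n)},N_2^{(n)},\ldots)\eqdistr(\tilde N_1,\tilde N_2,\ldots)\mid\{T_k=n\}$. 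I would choose $p=p_n:=k/n$, so that $\E G_1=n/k$ and hence $\E T_k=n$ \emph{exactly}; this renders the conditioning event perfectly central. Note that the centering $k\,\P[b_1^{(n)}=j]$ in the statement is precisely the conditional mean $\E[\tilde N_j\mid T_k=n]$, so no separate mean correction is required.

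Since convergence in $\R^\infty$ for the product topology is equivalent to convergence of all finite-dimensional marginals, it suffices to treat $(\tilde N_1,\ldots,\tilde N_m)$ for each fixed $m$. I would introduce the i.i.d.\ $(m+1)$-dimensional summands $V_i:=(\1_{\{G_i=1\}},\ldots,\1_{\{G_i=m\}},G_i)$. By the ordinary multivariate CLT, the $\sqrt k$-scaled centered sum $\frac{1}{\sqrt k}\sum_{i=1}^k(V_i-\E V_i)$ converges to a centered Gaussian with some covariance $\Sigma$, whose blocks are computed from a single copy (using $p_n\to\alpha$, so that $p_i\to\alpha(1-\alpha)^{i-1}=p_i$). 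The target law is that of the first $m$ coordinates conditioned on the last coordinate being $0$. To make this rigorous I would invoke a local limit theorem for the lattice sum $T_k$ (the geometric law is aperiodic with finite variance) together with the joint local CLT; the standard characteristic-function argument then shows that conditioning on $\{T_k=n\}$ yields the Gaussian conditional law with the Schur-complement covariance.

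It remains to identify this conditional covariance. For a single copy, in the limit $p_n\to\alpha$ with $p_j=\alpha(1-\alpha)^{j-1}$, one has
$$
\Cov(\1_{\{G=i\}},\1_{\{G=j\}})=p_i\1_{\{i=j\}}-p_ip_j,\qquad \Cov(\1_{\{G=i\}},G)=\frac{p_i}{\alpha}(\alpha i-1),\qquad \Var(G)=\frac{1-\alpha}{\alpha^2}.
$$
The Gaussian conditioning (Schur complement) formula gives
$$
\E\,\mathcal N_i\mathcal N_j=\Cov(\1_{\{G=i\}},\1_{\{G=j\}})-\frac{\Cov(\1_{\{G=i\}},G)\,\Cov(\1_{\{G=j\}},G)}{\Var(G)},
$$
and substituting the three expressions above collapses the second term to $\frac{p_ip_j}{1-\alpha}(\alpha i-1)(\alpha j-1)$, reproducing exactly the claimed covariance $p_i\1_{\{i=j\}}-p_ip_j-\frac{p_ip_j}{1-\alpha}(\alpha i-1)(\alpha j-1)$.

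The conceptual content sits in the first and third steps; the main technical obstacle is the rigorous conditional CLT of the second step. One must control the joint characteristic function $\E[\eee^{\langle\theta,\,\cdot\,\rangle}\eee^{-\ii tT_k}]$ uniformly over $t\in(-\pi,\pi]$, isolating the contribution of $t$ near $0$ (which produces the Gaussian) and bounding the remainder via aperiodicity, all while dividing by the marginal local-CLT asymptotics of $\P[T_k=n]$. An additional subtlety is that the parameter $p_n=k/n$ drifts with $n$, so the local CLT must be applied to a triangular array rather than a fixed law; uniformity of the Berry--Esseen and local-CLT bounds over $p_n$ in a neighbourhood of $\alpha$ is what resolves this.
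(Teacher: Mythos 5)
Your proposal is correct and follows essentially the same route as the paper's appendix proof: realize the composition as i.i.d.\ geometric variables with parameter $k/n$ conditioned on their sum, prove a conditional CLT via a characteristic-function/local-limit argument (the paper packages this as Holst's integral representation plus the Lindeberg--Feller theorem and a local CLT for the negative binomial), and identify the limiting covariance as the Schur complement, which you compute correctly. The only point you gloss over is that the CLT machinery naturally centers by the unconditional mean $k\,\P[G_1=j]$ rather than by $k\,\P[b_1^{(n)}=j]$, so one still needs the (easy) estimate that these two centerings differ by $O(1)=o(\sqrt{k})$, which is the paper's Lemma~\ref{eq:centerings_equivalent}.
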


\begin{remark}
Let us mention an interpretation of the random vector $(\mathcal{N}_j)_{j\geq 1}$ as a conditional distribution. If $\mathcal{G}_1,\mathcal{G}_2,\ldots$ are independent centered Gaussian variables with $\Var \mathcal{G}_j = p_j$, then $(\mathcal{N}_j)_{j\geq 1}$ has the same distribution as $(\mathcal{G}_j)_{j\geq 1}$ conditioned on the event $\{\sum_{j=1}^\infty \mathcal{G}_j =0, \sum_{j=1}^\infty j \mathcal{G}_j = 0\}$. This can be easily verified using the formulas for the covariance matrix of the conditional Gaussian distribution.
\end{remark}

Theorem~\ref{thm:clt_compositions} is known and has been rediscovered several times. Its proofs are based on a representation of the distribution of $(b_1^{(n)},\ldots,b_k^{(n)})$ as the law of $k$ independent geometrically distributed random variables conditioned on their sum to be $n$. More precisely, we have
\begin{equation}\label{eq:conitional_law}
\P[(b_1^{(n)},\ldots,b_k^{(n)})\in\cdot]=\P[(G_1,\ldots,G_k)\in\cdot|G_1+\cdots+G_k=n],
\end{equation}
where $G_1,\ldots,G_k$ are independent random variables having the same geometric law on $\N$ with parameter $\theta$. Note that this representation holds for arbitrary $\theta\in(0,1)$ and we are free to choose it as we wish. It is convenient to put  $\theta:=\theta_n=k/n$, so that the mean of $G_1+\ldots+G_k$ is $n$. This identifies the random composition as a special case of the generalized allocation scheme introduced by V.\ F.\ Kolchin in~\cite{kolcin_gen_allocation} and much studied thereafter; see, e.g., \cite{kolcin_branching} and~\cite[Chapter~VIII]{kolcin_book_allocations}. In particular, the convergence of one-dimensional distributions in Theorem~\ref{thm:clt_compositions} is contained in Theorem~1 of~\cite{kolcin_gen_allocation}. The full statement of  Theorem~\ref{thm:clt_compositions} is a special case of the general results of Holst, see~\cite[Theorem~2]{holst1979}  or~\cite[Theorem~2]{holst_urn_models}, but it requires some effort to see this. The proofs by Holst rely on the paper by Le Cam~\cite{lecam1958} who studied a related question for exponential random variables. A special case of Holst's results, from which Theorem~\ref{thm:clt_compositions} follows directly, can be found in the paper by Ivchenko~\cite[Theorem~4]{ivchenko}.  The corresponding multidimensional local limit theorem was derived by Trunov~\cite[Theorem~2.1]{trunov} who did not rely on~\cite{lecam1958}. See also~\cite[Theorem~3]{Vershik+Yakubovich:2003} for a weak law of large numbers, and~\cite[Section~4.4]{sachkov_book_probabilistic} for a similar result about partitions instead of compositions. To keep the paper self-contained we shall give a sketch of the proof of Theorem~\ref{thm:clt_compositions} in the Appendix.

The papers~\cite{holst1979,holst_urn_models,ivchenko,trunov} use the sequence $k\theta_n(1-\theta_n)^{j-1}=k\P[G_1=j]$ to center $N_j^{(n)}$. Let us check that it can be replaced by the sequence $\E N_j^{(n)} = k\P[b_1^{(n)}=j]$ used in Theorem~\ref{thm:clt_compositions}.

\begin{lemma}\label{eq:centerings_equivalent}
Assume~\eqref{eq:k_sim_alpha_n} and put $\theta_n:=k/n$. Then, for every fixed $j\in \N$, we have
$$
\left|k\P[b_1^{(n)}=j]-k\theta_n(1-\theta_n)^{j-1}\right|=O(1),\quad n\to\infty.
$$
\end{lemma}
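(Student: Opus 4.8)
The plan is to reduce the claim to a multiplicative estimate and exploit that $k=\Theta(n)$ under \eqref{eq:k_sim_alpha_n}. Write $p_n:=\P[b_1^{(n)}=j]$ and $q_n:=\theta_n(1-\theta_n)^{j-1}$ with $\theta_n=k/n$. Since $\theta_n\to\alpha\in(0,1)$, the quantity $q_n$ is bounded (indeed $q_n\to\alpha(1-\alpha)^{j-1}>0$, so $q_n=\Theta(1)$), and $k\le n$. Hence it suffices to prove that
\[
p_n-q_n=O(1/n),
\]
because then $k\,|p_n-q_n|=O(k/n)=O(1)$. In turn, since $q_n=\Theta(1)$, it is enough to establish that $p_n/q_n=1+O(1/n)$, as $p_n-q_n=q_n\,(p_n/q_n-1)$.

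First I would make $p_n$ explicit. Cancelling factorials in \eqref{eq:b_1_n_distribution} and pulling out the powers of $n$ yields
\[
p_n=\frac{\binom{n-j-1}{k-2}}{\binom{n-1}{k-1}}
=\frac{k-1}{n}\cdot\frac{\prod_{i=0}^{j-2}\bigl(1-\tfrac{k+i}{n}\bigr)}{\prod_{i=1}^{j}\bigl(1-\tfrac{i}{n}\bigr)}.
\]
Dividing by $q_n=\tfrac{k}{n}(1-\tfrac{k}{n})^{j-1}$, writing $(1-\tfrac{k}{n})^{j-1}=\prod_{i=0}^{j-2}(1-\tfrac{k}{n})$, and using the identity $\tfrac{1-(k+i)/n}{1-k/n}=1-\tfrac{i}{n-k}$, I obtain
\[
\frac{p_n}{q_n}
=\frac{k-1}{k}\cdot\prod_{i=0}^{j-2}\Bigl(1-\frac{i}{n-k}\Bigr)\cdot\prod_{i=1}^{j}\Bigl(1-\frac{i}{n}\Bigr)^{-1}.
\]

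The key observation is that, because $\theta_n\to\alpha$ with $\alpha$ bounded away from $1$, we have $n-k=n(1-\theta_n)=\Theta(n)\to\infty$; therefore each factor $1-\tfrac{i}{n-k}$ equals $1+O(1/n)$ uniformly over the fixed range $0\le i\le j-2$, while likewise each $1-\tfrac{i}{n}=1+O(1/n)$ and $\tfrac{k-1}{k}=1+O(1/n)$. Since all three products contain only finitely many factors (there are $j-1$, $j$, and $1$ of them, with $j$ fixed), their product is $1+O(1/n)$, where the implied constant depends only on $j$ and $\alpha$. This gives $p_n/q_n=1+O(1/n)$, hence $p_n-q_n=q_n\cdot O(1/n)=O(1/n)$, and finally $k\,|p_n-q_n|=O(1)$, as required.

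The computation is entirely routine; the only point requiring care is the bookkeeping of the error terms, and the essential structural input is that $\alpha\in(0,1)$ is bounded away from \emph{both} endpoints. Boundedness away from $1$ guarantees $n-k\to\infty$, so that the factors $i/(n-k)$ are genuinely $O(1/n)$ rather than $O(1)$; boundedness away from $0$ guarantees $q_n=\Theta(1)$, so that the multiplicative control on the ratio converts into the sharp additive bound $O(1/n)$ on $p_n-q_n$. If one tried to run the same argument in a regime where $\theta_n\to 1$, the step $n-k\to\infty$ would fail and the finite products would no longer be $1+O(1/n)$, which is precisely why the lemma is stated only under the central-regime assumption \eqref{eq:k_sim_alpha_n}.
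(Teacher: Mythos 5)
Your proof is correct and takes essentially the same route as the paper: both arguments expand the ratio of binomial coefficients from \eqref{eq:b_1_n_distribution} as a product of a fixed number ($j$ being fixed) of elementary factors, show each factor carries a multiplicative $1+O(1/n)$ error using $k\sim\alpha n$ with $\alpha\in(0,1)$, and conclude $\P[b_1^{(n)}=j]-\theta_n(1-\theta_n)^{j-1}=O(1/n)$, whence multiplication by $k\leq n$ gives $O(1)$. Passing through the ratio $p_n/q_n$ rather than expanding $p_n$ directly, as the paper does, is only a cosmetic difference.
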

\begin{proof}
By~\eqref{eq:b_1_n_distribution}, we have
\begin{align*}
\P[b_1^{(n)}=j]&=\frac{\binom{n-j-1}{k-2}}{\binom{n-1}{k-1}}=\frac{k-1}{n-1}\left(1-\frac{k-2}{n-j}\right)\left(1-\frac{k-2}{n-j+1}\right)\cdots\left(1-\frac{k-2}{n-2}\right)\\
&=\left(\frac{k}{n}+O\left(\frac{1}{n}\right)\right)\left(1-\frac{k}{n}+O\left(\frac{1}{n}\right)\right)\left(1-\frac{k}{n}+O\left(\frac{1}{n}\right)\right)\cdots\left(1-\frac{k}{n}+O\left(\frac{1}{n}\right)\right)\\
&=\frac{k}{n}\left(1-\frac{k}{n}\right)^{j-1}+O\left(\frac{1}{n}\right)=\theta_n\left(1-\theta_n\right)^{j-1}+O\left(\frac{1}{n}\right),
\end{align*}
which implies the claim after multiplication by $k$.
\end{proof}

\begin{proof}[Proof of Theorem~\ref{thm:clt_central_regime} using Theorem~\ref{thm:clt_compositions}]
Recall that $Z^{(j)}_{n}\overset{{\rm d}}{=}\Lah(n,1)$, is an array of mutually independent random variables such that $Z^{(j)}_{n}$ has distribution~\eqref{eq:records}, for $n,j\in\N$. Put
$$
\widehat{Z}^{(j)}_{n}:=\sum_{i=1}^{n}Z_j^{(i)},\quad n,j\in\N.
$$
Representation~\eqref{eq:representation_as_a_sum_over_blocks_of_composition} is equivalent to the following one:
$$
\Lah(n,k)\overset{d}{=}\sum_{j=1}^{n}\widehat{Z}^{(j)}_{N_j^{(n)}}.
$$
By Donsker's theorem and using that $\E Z^{(i)}_{j}=H_j$ and $\Var (Z^{(i)}_{j})=H_j-H_j^{(2)}$, for every $j\in\N$,
\begin{equation}\label{eq:donsker1}
\left(\frac{\widehat{Z}^{(j)}_{\lfloor nt\rfloor}-H_j nt}{\sqrt{n}}\right)_{t\geq 0}\todistrD \left(\sqrt{H_j-H_j^{(2)}}B_j(t)\right)_{t\geq 0},
\end{equation}
in the Skorokhod space $D[0,\,\infty)$ endowed with the standard $J_1$-topology, where $B_1,B_2,\ldots$ are independent standard Brownian motions. Moreover, the convergences in~\eqref{eq:donsker1} hold also mutually for all $j\in\N$ by independence.  Combining this with Theorem~\ref{thm:clt_compositions} we obtain that, for every $M\in\N$,
\begin{multline}\label{eq:joint_clt}
\left(\left(\frac{\widehat{Z}^{(j)}_{\lfloor nt\rfloor}-H_j nt}{\sqrt{n}}\right)_{t\geq 0},\frac{N_j^{(n)}}{n},\frac{N_j^{(n)}-k\P[b_1^{(n)}=j]}{\sqrt{n}}\right)_{j=1,\ldots,M}\\
\todistrD \left(\left(\sqrt{H_j-H_j^{(2)}}B_j(t)\right)_{t\geq 0},\alpha^2(1-\alpha)^{j-1},\sqrt{\alpha}\mathcal{N}_j\right)_{j=1,\ldots,M},
\end{multline}
in the product topology on $(D[0,\,\infty)\times\R\times \R)^M$, where we have also used that $k\sim \alpha n$ and $\lim_{n\to\infty}\P[b_1^{(n)}=j]=\alpha(1-\alpha)^{j-1}$.

Applying the mapping $D[0,\,\infty)\times\R\times\R\ni (f(\cdot),x,y)\mapsto f(x)+H_j y\in\R$ which is a.s.~continuous at the point given by the right-hand side of~\eqref{eq:joint_clt}, yields
$$
\left(\frac{\widehat{Z}_{N_j^{(n)}}^{(j)}-kH_j \P[b_1^{(n)}=j]}{\sqrt{n}}\right)_{j=1,\ldots,M}\todistr\left(\sqrt{H_j-H_j^{(2)}}B_j(\alpha^2(1-\alpha)^{j-1})+H_j\sqrt{\alpha}\mathcal{N}_j\right)_{j=1,\ldots,M}.
$$
Summation over $j=1,\ldots,M$ gives
\begin{equation}\label{eq:donsker2}
\frac{\sum_{j=1}^{M}\widehat{Z}_{N_j^{(n)}}^{(j)}-k\sum_{j=1}^{M}H_j \P[b_1^{(n)}=j]}{\sqrt{n}}\todistr\sum_{j=1}^{M}\left(\sqrt{H_j-H_j^{(2)}}B_j(\alpha^2(1-\alpha)^{j-1})+H_j\sqrt{\alpha}\mathcal{N}_j\right),
\end{equation}
and this relation holds for every fixed $M\in\N$. Note that as $M\to\infty$, the right-hand side of~\eqref{eq:donsker2} converges to
$$
\sum_{j=1}^{\infty}\left(\sqrt{H_j-H_j^{(2)}}B_j(\alpha^2(1-\alpha)^{j-1})+H_j\sqrt{\alpha}\mathcal{N}_j\right)
$$
and this series converges almost surely because
$$
\sum_{j=1}^{\infty}\left(\sqrt{H_j-H_j^{(2)}}\E|B_j(\alpha^2(1-\alpha)^{j-1})|+H_j\sqrt{\alpha}\E|\mathcal{N}_j|\right)<\infty.
$$
According to Theorem 3.2 in \cite{billingsley} it remains to prove that for every fixed $\varepsilon>0$,
\begin{equation}\label{eq:donsker3}
\lim_{M\to\infty}\limsup_{n\to\infty}
\P\left[\left|\sum_{j=M+1}^{n}\widehat{Z}_{N_j^{(n)}}^{(j)}-k\sum_{j=M+1}^{n}H_j \P[b_1^{(n)}=j]\right|>\varepsilon\sqrt{n}\right]=0.
\end{equation}
By Markov's inequality, it suffices to check that
\begin{equation}\label{eq:donsker4}
\lim_{M\to\infty}\limsup_{n\to\infty}\frac{\sum_{j=M+1}^{n}\E\left|\widehat{Z}_{N_j^{(n)}}^{(j)}-kH_j \P[b_1^{(n)}=j]\right|}{\sqrt{n}}=0.
\end{equation}
In order to prove~\eqref{eq:donsker4} we argue as follows. Using Wald's identity followed by the formula for the conditional variance, we derive, for all $j\in\N$,
\begin{align*}
\E\left|\widehat{Z}_{N_j^{(n)}}^{(j)}-kH_j \P[b_1^{(n)}=j]\right|&=\E\left|\widehat{Z}_{N_j^{(n)}}^{(j)}-\E\widehat{Z}_{N_j^{(n)}}^{(j)}\right|\leq \left((\Var (\widehat{Z}_{N_j^{(n)}}^{(j)})\right)^{1/2}\\
&= \left(\E\Var(\widehat{Z}_{N_j^{(n)}}^{(j)}|N_j^{(n)}))+\Var(\E(\widehat{Z}_{N_j^{(n)}}^{(j)}|N_j^{(n)}))\right)^{1/2}\leq \left(\E H_{N_j^{(n)}}+\Var (H_{N_j^{(n)}})\right)^{1/2}\\
&\leq \left(\E H^2_{N_j^{(n)}}\right)^{1/2}\leq {\rm const}\cdot (\E N_j^{(n)} )^{1/2}\leq {\rm const}\cdot\sqrt{n}\sqrt{\P[b_1^{(n)}=j]},
\end{align*}
where '${\rm const}$' denotes absolute constants whose values are of no importance. It remains to note that
\begin{multline*}
\lim_{M\to\infty}\limsup_{n\to\infty}\sum_{j=M+1}^{n}\sqrt{\P[b_1^{(n)}=j]}=\lim_{M\to\infty}\limsup_{n\to\infty}\sum_{j=M+1}^{n-k+1}\sqrt{\frac{\binom{n-j-1}{k-2}}{\binom{n-1}{k-1}}}\\
=\lim_{M\to\infty}\limsup_{n\to\infty}\sum_{j=M+1}^{n-k+1}\sqrt{(k-1)\frac{(n-k-j+2)\cdots(n-k)}{(n-j)\cdots(n-1)}}=0,
\end{multline*}
as readily follows from the inequality $\alpha n/2\leq k \leq n$ that holds for sufficiently large $n$.

Combining~\eqref{eq:donsker2} and~\eqref{eq:donsker3} we obtain
$$
\frac{\Lah(n,k)-\E\Lah(n,k)}{\sqrt{n}}\todistr \sum_{j=1}^{\infty}\left(\sqrt{H_j-H_j^{(2)}}B_j(\alpha^2(1-\alpha)^{j-1})+H_j\sqrt{\alpha}\mathcal{N}_j\right).
$$
The limiting variable is obviously normal, has zero mean and its variance can be derived as follows:
\begin{align*}
&\hspace{-1cm}\Var\left(\sum_{j=1}^{\infty}\left(\sqrt{H_j-H_j^{(2)}}B_j(\alpha^2(1-\alpha)^{j-1})+H_j\sqrt{\alpha}\mathcal{N}_j\right)\right)\\
&=\E\left(\sum_{j=1}^{\infty}\left(\sqrt{H_j-H_j^{(2)}}B_j(\alpha^2(1-\alpha)^{j-1})+H_j\sqrt{\alpha}\mathcal{N}_j\right)\right)^2\\
&=\sum_{j=1}^{\infty}(H_j-H_j^{(2)})\alpha^2(1-\alpha)^{j-1}+\E\left(\sum_{j=1}^{\infty}H_j\sqrt{\alpha}\mathcal{N}_j\right)^2\\
&=\sum_{j=1}^{\infty}(H_j-H_j^{(2)})\alpha^2(1-\alpha)^{j-1}+\alpha\sum_{i,j=1}^{\infty}H_i H_j\left(p_i\1_{\{i=j\}}-p_i p_j-\frac{p_i p_j}{1-\alpha}(\alpha i-1)(\alpha j-1)\right)\\
&=\alpha\sum_{j=1}^{\infty}(H_j-H_j^{(2)})p_j+\alpha\left(\sum_{j=1}^{\infty}H_j^2 p_j -\left(\sum_{j=1}^{\infty}H_j p_j\right)^2-\frac{1}{1-\alpha}\left(\sum_{j=1}^{\infty}H_j p_j(\alpha j-1)\right)^2\right),
\end{align*}
where $p_j=\alpha(1-\alpha)^{j-1}$ for $j\in\N$. We only show how to deal with the sums containing $H_j^{(2)}$ and $H_j^2$, the other being elementary. We have
\begin{align*}
\sum_{j=1}^{\infty}H_j^2 p_j-\sum_{j=1}^{\infty}H_j^{(2)} p_j&=\sum_{j,k,l=1}^{\infty}\frac{1}{kl}\alpha(1-\alpha)^{j-1}\1_{\{k,l\leq j\}}-\sum_{j,k=1}^{\infty}\frac{1}{k^2}\alpha(1-\alpha)^{j-1}\1_{\{k\leq j\}}\\
&=\sum_{k,l=1}^{\infty}\frac{1}{kl}(1-\alpha)^{\max(k,l)-1}-\sum_{k=1}^{\infty}\frac{1}{k^2}(1-\alpha)^{k-1}\\
&=\sum_{k=1}^{\infty}\frac{1}{k}\left(\sum_{l=1}^{k-1}\frac{(1-\alpha)^{k-1}}{l}+\sum_{l=k}^{\infty}\frac{(1-\alpha)^{l-1}}{l}\right)-\sum_{k=1}^{\infty}\frac{1}{k^2}(1-\alpha)^{k-1}\\
&=\sum_{k=1}^{\infty}\frac{1}{k}H_{k-1}(1-\alpha)^{k-1}+\sum_{k=1}^{\infty}\frac{1}{k}H_{k}(1-\alpha)^{k-1}-\sum_{k=1}^{\infty}\frac{1}{k^2}(1-\alpha)^{k-1}\\
&=2\sum_{k=1}^{\infty}\frac{H_{k-1}}{k}(1-\alpha)^{k-1}=\frac{2}{1-\alpha}\int_{\alpha}^{1}\left(\sum_{k=1}^{\infty}H_{k-1}(1-t)^{k-1}\right){\rm d}t\\
&=-\frac{2}{1-\alpha}\int_{\alpha}^{1}\frac{\log t}{t}{\rm d}t=\frac{\log^2\alpha}{1-\alpha}.
\end{align*}
The proof is complete.
\end{proof}

\subsection{Cumulant generating function in the central regime}
In this and the following section we establish a large deviations principle under assumption~\eqref{eq:k_sim_alpha_n}. First we look at the cumulant generating function.
\begin{proposition}\label{prop:laplace_trafo_conv}
Assume that~\eqref{eq:k_sim_alpha_n} holds. Then,  for every fixed $t\in \R$, the limit
\begin{equation}\label{eq:gaertner_ellis}
\varphi_\alpha(t) := \lim_{n\to\infty} \frac 1n \log \E \eee^{t \Lah(n,k)},
\end{equation}
exists finitely, and, moreover, the function $\varphi_\alpha: \R\to\R$ is given by
\begin{equation}\label{eq:varphi_alpha}
\varphi_\alpha(t) = 2 \alpha \log \alpha + (1-\alpha) \log (1-\alpha) + (\alpha-1) \log r(t) + \alpha t - \alpha (\eee^t + 1) \log (1- r(t)),
\end{equation}
where $r(t) = r(t; \alpha)\in (0,1)$ is a unique non-zero solution to the equation
\begin{equation}\label{eq:implicit_r_t}
(1-r(t))^{\eee^t + 1} - 1 +  r(t) (1+\alpha \eee^t) = 0.
\end{equation}
\end{proposition}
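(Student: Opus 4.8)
The plan is to read the moment generating function off the exact formula~\eqref{eq:P_n_k_coeff} and to extract the exponential growth rate of the resulting coefficient by the saddle point method. Concretely, for real $t$ we have
$$
\E \eee^{t\Lah(n,k)} = P_{n,k}(\eee^t) = \frac{1}{\binom{n-1}{k-1}}\,[x^n]\left((1-x)^{-\eee^t}-1\right)^k .
$$
Since $k\sim\alpha n$, Stirling's formula gives $-\frac1n\log\binom{n-1}{k-1}\ton \alpha\log\alpha+(1-\alpha)\log(1-\alpha)$, so it remains to show that $\frac1n\log[x^n]f(x)^k\to g(r(t))$, where I write $f(x):=(1-x)^{-\eee^t}-1$ and $g(x):=\alpha\log f(x)-\log x$ for $x\in(0,1)$.

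First I would establish the upper bound, which is elementary. For $t\in\R$ the series $f$ has nonnegative Taylor coefficients, hence $[x^n]f^k\le f(x)^k/x^n$ for every $x\in(0,1)$, so $\limsup_n\frac1n\log[x^n]f^k\le \inf_{x\in(0,1)}g(x)$. One checks that $g(x)\to+\infty$ both as $x\to0^+$ and as $x\to1^-$, so the infimum is attained at an interior critical point. Solving $g'(x)=0$ and abbreviating the root by $r$, one clears the factor $(1-r)^{\eee^t}$ and arrives at exactly equation~\eqref{eq:implicit_r_t}. The function $h(r):=(1-r)^{\eee^t+1}-1+r(1+\alpha\eee^t)$ satisfies $h(0)=0$, $h'(0)=\eee^t(\alpha-1)<0$, $h(1)=\alpha\eee^t>0$, and $h''>0$ on $(0,1)$; convexity then yields a unique nonzero root $r(t)\in(0,1)$ (the root $r=0$ being spurious), which is the global minimizer of $g$, so $\inf_x g(x)=g(r(t))$.

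For the matching lower bound I would argue by exponential tilting. Writing $a_m:=[x^m]f^k$, introduce for $\rho\in(0,1)$ the probability weights $p_m:=a_m\rho^m/f(\rho)^k$ on $\{k,k+1,\dots\}$; because $f^k$ is a perfect $k$-th power, $p$ is the law of a sum of $k$ i.i.d.\ $\N$-valued random variables whose common probability generating function is $f(\rho x)/f(\rho)$. Choosing $\rho=\rho_n$ so that the mean of $p$ equals $n$ — which is precisely the finite-$n$ saddle-point equation $\frac{k}{n}\rho f'(\rho)=f(\rho)$ and forces $\rho_n\to r(t)$ — a local central limit theorem applies: the summands have all exponential moments ($\rho_n<1$ lies inside the disc of convergence), and $f$ has strictly positive coefficients in every degree $\ge1$, so the law is aperiodic. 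This gives $p_n\gtrsim n^{-1/2}$, whence $\frac1n\log a_n\ge \frac kn\log f(\rho_n)-\log\rho_n-o(1)\to g(r(t))$. Together with the upper bound this proves both the existence of the limit~\eqref{eq:gaertner_ellis} and the identity $\varphi_\alpha(t)=\alpha\log\alpha+(1-\alpha)\log(1-\alpha)+g(r(t))$.

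It then remains only to simplify $g(r(t))$. Using~\eqref{eq:implicit_r_t} in the form $(1-r)^{\eee^t}=\frac{1-r(1+\alpha\eee^t)}{1-r}$ one gets $f(r)=(1-r)^{-\eee^t}-1=\frac{\alpha\eee^t r}{1-r(1+\alpha\eee^t)}$, and substituting $1-r(1+\alpha\eee^t)=(1-r)^{\eee^t+1}$ turns $g(r)$ into $\alpha\log\alpha+\alpha t+(\alpha-1)\log r-\alpha(\eee^t+1)\log(1-r)$; adding the entropy term reproduces~\eqref{eq:varphi_alpha}. The genuinely delicate step is the lower bound, that is, the analytic justification that the saddle-point heuristic is tight (equivalently, the local limit theorem for the tilted sum, together with tracking $\rho_n\to r(t)$ as $k/n\to\alpha$); by contrast, the existence and uniqueness of $r(t)$ and the final algebraic simplification are routine.
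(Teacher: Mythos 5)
Your proposal is correct and follows essentially the same route as the paper: both start from the representation $\E\,\eee^{t\Lah(n,k)}=\binom{n-1}{k-1}^{-1}[x^n]\bigl((1-x)^{-\eee^t}-1\bigr)^k$ of Lemma~\ref{lem:gener_funct_formula} and identify the exponential rate by a saddle point located at the unique nonzero root of~\eqref{eq:implicit_r_t}, whose existence and uniqueness you establish exactly as in Remark~\ref{rem:sol_unique}, followed by the same algebraic simplification to reach~\eqref{eq:varphi_alpha}. The only divergence is in how the saddle-point asymptotics are justified: the paper cites the large-powers theorem (Theorem~VIII.8 of Flajolet--Sedgewick, applied to $B(x)=x^{-1}((1-x)^{-\eee^t}-1)$), whereas you unpack the standard proof of that theorem --- an elementary upper bound from positivity of the coefficients plus a lower bound via exponential tilting and a local CLT for the tilted i.i.d.\ sum --- which is a legitimate self-contained substitute, provided the local CLT is carried out uniformly in the triangular-array parameter $\rho_n\to r(t)$, the point you correctly flag as the only delicate one.
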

\begin{remark}\label{rem:sol_unique}
Let us argue that Equation~\eqref{eq:implicit_r_t} has a unique solution $r(t)\in(0,1)$ for every $t\in\R$. For every fixed $\alpha\in (0,1)$ and $t\in \R$, the function $h(r) = h_{\alpha,t}(r) := (1-r)^{\eee^t + 1} - 1 + r(1 + \alpha \eee^t)$, defined for $r\in [0,1]$, is strictly convex since its first derivative
$$
h'(r) = -(\eee^t + 1) (1-r)^{\eee^t} + (1+\alpha \eee^t), \quad r\in [0,1],
$$
strictly increases. Moreover, we have $h(0)=0$ and $h'(0) <0$. To complete the proof of the existence and the uniqueness of zero of $h$ in the interval $(0,1)$,  observe that $h(1)>0$.  Moreover, it follows from the above that $h'(r(t))>0$. By the implicit function theorem we conclude that $r(t)$ is differentiable on $(0,1)$. Clearly, $r(0;\alpha) = 1-\alpha$.
\end{remark}

\begin{proof}[Proof of Proposition~\ref{prop:laplace_trafo_conv}]
Fix any $t\in\R$. By Lemma~\ref{lem:gener_funct_formula} we have
$$
\E \eee^{t \Lah(n,k)}
=
P_{n,k}(\eee^t)
=
\frac {1}{\binom {n-1}{k-1}} [x^n] \left((1-x)^{-\eee^t} - 1\right)^k.
$$
We shall now use the classical saddle-point method to derive the asymptotics of $[x^n] ((1-x)^{-\eee^t} - 1)^k$. With this respect Theorem VIII.8 in \cite{Flajolet_book} perfectly fits our needs. Note that
$$
[x^n] \left((1-x)^{-\eee^t} - 1\right)^k=[x^{n-k}]\left(\frac{(1-x)^{-e^t}-1}{x}\right)^k=:[x^{n-k}](B(x))^k,
$$
and the function $x\mapsto B(x)$, for every fixed $t\in\R$, is analytic in the interior of the unit disk, has non-negative coefficients and $B(0)\neq 0$. Thus, applying Theorem VIII.8 in \cite{Flajolet_book} with $A(x):=1$, $B(x)=x^{-1}((1-x)^{-e^t}-1)$,  replacing $(N,n)$ there by $(n-k,k)$ and taking $\lambda=\lambda_n=n/k-1$, we get\footnote{Note that the claim of Theorem VIII.8 holds locally uniformly in $\lambda$.}
\begin{equation}\label{eq:saddle-point-asymp}
[x^n] \left((1-x)^{-\eee^t} - 1\right)^k=(B(\zeta_n(t)))^k (\zeta_n((t)))^{-n+k-1}(2\pi k \widehat{\zeta}_n(t))^{-1/2}(1+o(1)),\quad n\to\infty,
\end{equation}
where $\zeta_n(t)$ is the unique root of
\begin{equation}\label{eq:saddle-point-equation}
\frac{\zeta_n(t)B'(\zeta_n(t))}{B(\zeta_n(t))}=\lambda_n,
\end{equation}
and
$$
\widehat{\zeta}_n(t):=\frac{{\rm d}}{{\rm d}z^2}\left(\log B(z)-\lambda_n\log z\right)\Big|_{z=\zeta_n(t)}.
$$
Substituting $B(x)=x^{-1}((1-x)^{-e^t}-1)$ into~\eqref{eq:saddle-point-equation} and simplifying, we obtain
$$
\frac{e^t\zeta_n(t)}{1-\zeta_n(t)-(1-\zeta_n(t))^{e^t+1}}=\frac{n}{k}.
$$
Sending $n\to\infty$ we see from the discussion in Remark~\ref{rem:sol_unique} that
$$
\lim_{n\to\infty}\zeta_n(t)=r(t),
$$
where $r(t)$ is given by~\eqref{eq:implicit_r_t} and also that $\widehat{\zeta_n}(t)$ converges to a finite non-zero constant. Thus, taking logarithms in~\eqref{eq:saddle-point-asymp}, dividing by $n$ and sending $n\to\infty$, yields
$$
\lim_{n\to\infty}n^{-1}\log\left([x^n] \left((1-x)^{-\eee^t} - 1\right)^k\right)=\alpha\log B(r(t))-(1-\alpha)\log r(t).
$$
Combining this with
\begin{equation}\label{eq:asympt_binomial_coeff}
\lim_{n\to\infty} \frac 1n \log \binom{n-1}{k-1} = -\alpha \log \alpha - (1-\alpha) \log (1-\alpha),
\end{equation}
which follows from the Stirling formula, we obtain~\eqref{eq:varphi_alpha} after elementary manipulations.
\end{proof}

\begin{remark}[On linear growth of cumulants]
Recall that the $\ell$-th cumulant of a random variable $X$ is defined as
$$
\kappa_\ell(X) := D^\ell_t \Big|_{t=0} \left(\log \E \eee^{t X}\right),
$$
and we assume that $X$ has finite exponential moments in a neighborhood of $0$. Let, as before, $k=k(n) \sim \alpha n$ for some $\alpha \in (0,1)$. Refining the arguments used in the proof of Proposition~\ref{prop:laplace_trafo_conv}, it is possible to show that the cumulants of  $\Lah(n,k)$ grow linearly in the sense that
\begin{equation}\label{eq:cumulants_linearly}
\lim_{n\to\infty} \frac{\kappa_\ell(\Lah(n,k))}{n} = \varphi_\alpha^{(\ell)}(0)
\qquad
\text{ for all } \ell\in\N.
\end{equation}
Indeed, this relation can be obtained from~\eqref{eq:gaertner_ellis} by differentiating it $\ell\in \N$ times. To justify that the limit and the derivative can be interchanged, it suffices to check that the assertion of Proposition~\ref{prop:laplace_trafo_conv} continues to hold locally uniformly for complex $t$ in a small neighborhood of $\R$. Then, since all involved functions are analytic, we can interchange $D^\ell_t$ and the large $n$ limit. The validity of Proposition~\ref{prop:laplace_trafo_conv} in a small complex neighborhood of $\R$ follows essentially from  the stability of the saddle point under small analytic perturbations of the parameter $t$. Making this argument rigorous (and in particular, checking the local uniformity of convergence) is standard but tedious, and we omit the details.

Taking $\ell = 1$ and $\ell = 2$ in~\eqref{eq:cumulants_linearly} yields the formulae
$\E \Lah(n,k) \sim \varphi'_\alpha(0) n$
and
$
\Var \Lah(n,k) \sim \varphi''_\alpha(0) n
$,
and the expression for $\varphi_\alpha(t)$ given in~\eqref{eq:varphi_alpha} yields, after lengthy calculations, that
\begin{align*}
\varphi'_\alpha(0) = -\frac{\alpha \log \alpha}{1-\alpha},
\qquad
\varphi''_\alpha(0) = -\frac{\alpha}{1-\alpha} - \frac{\alpha(\alpha+1)\log \alpha}{(1-\alpha)^2} - \frac{\alpha^2 \log^2 \alpha}{(1-\alpha)^3}.
\end{align*}
Both formulas agree with previously derived results: formula~\eqref{eq:lah_distr_expect_asympt} for the expectation, and the variance of the limiting normal law in Theorem~\ref{thm:clt_central_regime}. Note that the linear growth of cumulants~\eqref{eq:cumulants_linearly} implies the CLT by the method of moments giving an alternative way to prove Theorem~\ref{thm:clt_central_regime}.
\end{remark}

\subsection{Large deviations in the central regime}
Recall that we assume~\eqref{eq:k_sim_alpha_n}. In the next theorem we shall state a large deviations principle (see~\cite{dembo_zeitouni_book}) for the Lah distribution which, in particular, implies that, as $n\to\infty$,
\begin{align}
\P[\Lah(n,k) \leq  (\beta+o(1)) n]  &= \exp\{ - n I_\alpha(\beta) + o(n) \}, \qquad \text{ if } \beta\in \left(\alpha, -\frac{\alpha \log \alpha}{1-\alpha}\right),\label{eq:LDP_Lah_leq}\\
\P[\Lah(n,k) \geq  (\beta+o(1)) n]  &= \exp\{ - n I_\alpha(\beta) + o(n) \}, \qquad \text{ if } \beta\in \left(-\frac{\alpha \log \alpha}{1-\alpha},1\right)
\label{eq:LDP_Lah_geq}
\end{align}
for a rate function $I_{\alpha}$ which we shall explicitly identify.

\begin{theorem}[LDP in the central regime]\label{theo:ldp_central}
Assume~\eqref{eq:k_sim_alpha_n}. Then, the sequence of random variables $\frac 1n \Lah(n,k)$, $n\in \N$, satisfies a large deviations principle with a convex rate function $I_{\alpha}:[0, 1] \to [0,\infty]$  defined as follows. For $\beta\in (\alpha,1)$ we have
\begin{align}
I_{\alpha}(\beta)
&=
\sup_{t\in \R} (\beta t - \varphi_\alpha(t))\label{eq:I_1}\\
&=
-\alpha \log \left(h^{-1}\left(\frac \beta \alpha\right) - 1\right) + \log \left(1- h^{-1}(\beta)\right) + \beta \log \left( - \frac{\log h^{-1}(\frac \beta\alpha)}{\log h^{-1}(\beta)}\right) - \alpha \log \alpha - (1-\alpha) \log (1-\alpha),
\label{eq:I_2}
\end{align}
where $h^{-1}(\cdot)$ is the inverse function of
\begin{equation}\label{eq:h_def}
h(x) := \varphi_x'(0) = -\frac {x \log x}{1-x}, \quad x\in(0,1)\cup (1,\infty),\quad h(0)=0,\quad h(1) :=1.
\end{equation}
For $\beta<\alpha$, we have  $I_\alpha(\beta) = +\infty$. Finally, at the boundary points $\beta=\alpha$ and $\beta=1$ we have $I_\alpha(\alpha) = \lim_{\beta\downarrow \alpha} I_{\alpha}(\beta)$ and $I_\alpha(1) = \lim_{\beta\uparrow 1} I_{\alpha}(\beta)$, namely
\begin{align}
I_\alpha(\alpha)
&=
\log \left(1- h^{-1}(\alpha)\right) -\alpha \log \left( - \log h^{-1}(\alpha)\right) - \alpha \log \alpha - (1-\alpha) \log (1-\alpha)
<+\infty
,\label{eq:I_s_alpha}\\
I_\alpha(1)
&=
-\alpha \log \left(h^{-1}\left(\frac 1 \alpha\right) - 1\right) + \log \log h^{-1}\left(\frac 1\alpha\right) - \alpha \log \alpha - (1-\alpha) \log (1-\alpha) < +\infty.\label{eq:I_s_1}
\end{align}
\end{theorem}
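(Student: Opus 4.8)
The plan is to derive the large deviations principle from the Gärtner--Ellis theorem (see~\cite{dembo_zeitouni_book}), using the scaled cumulant generating function $\varphi_\alpha$ already identified in Proposition~\ref{prop:laplace_trafo_conv}, and then to carry out an explicit Legendre-transform computation that turns the variational formula \eqref{eq:I_1} into the closed form \eqref{eq:I_2}. First I would verify the hypotheses of Gärtner--Ellis for $Y_n:=\frac1n\Lah(n,k)$. By Proposition~\ref{prop:laplace_trafo_conv}, $\frac1n\log\E\eee^{t\Lah(n,k)}\to\varphi_\alpha(t)$ exists finitely for every $t\in\R$, so the effective domain of $\varphi_\alpha$ is all of $\R$ and in particular contains $0$ in its interior. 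It remains to check essential smoothness and lower semicontinuity. Since $r(t)$ solves \eqref{eq:implicit_r_t} with $h'(r(t))>0$ (Remark~\ref{rem:sol_unique}), the implicit function theorem shows that $t\mapsto r(t)$, and hence $\varphi_\alpha$, is $C^1$ (indeed real-analytic) on $\R$; as the domain is open and $\varphi_\alpha$ is everywhere finite and differentiable, steepness is vacuous and essential smoothness holds. Gärtner--Ellis then yields the LDP for $Y_n$ at speed $n$ with the lower semicontinuous, convex rate function $I_\alpha=\varphi_\alpha^*$, which is exactly \eqref{eq:I_1}; convexity is automatic for a Legendre transform.

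Next I would pin down the effective domain and the boundary behaviour. The deterministic inclusion $\Lah(n,k)\in\{k,\dots,n\}$ forces $Y_n\in[k/n,1]\to[\alpha,1]$, so $I_\alpha(\beta)=+\infty$ for $\beta<\alpha$. To see that the supremum in \eqref{eq:I_1} is attained at a finite $t$ precisely for $\beta\in(\alpha,1)$, I would compute $\lim_{t\to-\infty}\varphi_\alpha'(t)=\alpha$ and $\lim_{t\to+\infty}\varphi_\alpha'(t)=1$ from \eqref{eq:varphi_alpha}--\eqref{eq:implicit_r_t}, so that $\varphi_\alpha'$ is an increasing bijection of $\R$ onto $(\alpha,1)$. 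For such $\beta$ the optimizer $t=t(\beta)$ is the unique root of $\varphi_\alpha'(t)=\beta$, and as a consistency check the minimum $I_\alpha=0$ is attained at $\beta=\varphi_\alpha'(0)=-\alpha\log\alpha/(1-\alpha)$, matching the splitting point in \eqref{eq:LDP_Lah_leq}--\eqref{eq:LDP_Lah_geq}. The boundary values $I_\alpha(\alpha)$ and $I_\alpha(1)$ are then obtained as the one-sided limits $\lim_{\beta\downarrow\alpha}$ and $\lim_{\beta\uparrow1}$, justified by lower semicontinuity of $I_\alpha$ together with the finiteness of these limits, verified directly from the closed form.

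The computational heart is the reduction of $I_\alpha(\beta)=\beta\,t(\beta)-\varphi_\alpha(t(\beta))$ to \eqref{eq:I_2}. I would differentiate \eqref{eq:varphi_alpha}, eliminating $r'(t)$ by differentiating the defining relation \eqref{eq:implicit_r_t}, and then use \eqref{eq:implicit_r_t} itself to cancel terms, after which the stationarity equation $\varphi_\alpha'(t)=\beta$ simplifies substantially. The decisive step is the reparametrization $x_1:=h^{-1}(\beta/\alpha)$ and $x_2:=h^{-1}(\beta)$ with $h$ as in \eqref{eq:h_def}; recall from Theorem~\ref{theo:expect_asympt} that $h(x)$ is the normalized mean number of records when the block-count ratio equals $x$, which explains why both the per-block ratio $\beta/\alpha$ and the global ratio $\beta$ enter. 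I expect to show that the optimizer satisfies $\eee^{t(\beta)}=-\log x_1/\log x_2$, and that $r(t)$ and $1-r(t)$ are expressible through $x_1$ and $x_2$; substituting these into $\beta t-\varphi_\alpha(t)$ and collecting terms reproduces the three explicit summands of \eqref{eq:I_2}. The boundary formulas \eqref{eq:I_s_alpha} and \eqref{eq:I_s_1} then follow by letting $\beta\downarrow\alpha$ (where $x_1\to h^{-1}(1)=1$) and $\beta\uparrow1$ (where $x_2\to1$): in each case two individually divergent logarithmic terms combine, e.g.\ near $\beta=\alpha$ the divergences of $-\alpha\log(x_1-1)$ and $\beta\log(-\log x_1/\log x_2)$ cancel at order $(\beta-\alpha)\log(\beta-\alpha)\to0$, leaving the stated finite values.

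I would flag the explicit Legendre computation as the main obstacle. While Gärtner--Ellis delivers the LDP and the abstract rate function \eqref{eq:I_1} almost for free, converting the implicitly defined dual pair $(t,r(t))$ into the closed $h^{-1}$-formula \eqref{eq:I_2} requires identifying the correct change of variables and repeatedly invoking \eqref{eq:implicit_r_t} to engineer the cancellations; the boundary limits in turn demand careful asymptotic bookkeeping of the competing logarithmic singularities. The verification of essential smoothness and the identification of the effective domain, by contrast, are routine given Remark~\ref{rem:sol_unique} and the deterministic support bounds.
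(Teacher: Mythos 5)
Your overall architecture is sound, and the first half of your argument --- Gärtner--Ellis applied to $\varphi_\alpha$ from Proposition~\ref{prop:laplace_trafo_conv}, essential smoothness via Remark~\ref{rem:sol_unique}, the support bound forcing $I_\alpha(\beta)=+\infty$ for $\beta<\alpha$ --- is exactly what the paper does to obtain the LDP with rate function~\eqref{eq:I_1}. Where you diverge is in establishing the closed form~\eqref{eq:I_2}: you propose to solve the stationarity equation $\varphi_\alpha'(t)=\beta$ explicitly, guessing $\eee^{t(\beta)}=-\log h^{-1}(\beta/\alpha)/\log h^{-1}(\beta)$ and expressing $r(t(\beta))$ through $h^{-1}$, then substituting back into the Legendre transform. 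Your guessed optimizer is in fact consistent with the paper (Remark~\ref{rem:I_alpha_beta_properties} records $\frac{\dd}{\dd\beta}I_\alpha(\beta)=\log\bigl(-\log h^{-1}(\beta/\alpha)/\log h^{-1}(\beta)\bigr)$, which by Legendre duality is precisely $t(\beta)$), so the route should close --- but you have left the decisive cancellations as an expectation rather than a computation, and eliminating $r(t)$ via~\eqref{eq:implicit_r_t} is genuinely laborious. The paper avoids this entirely: it invokes the known exponential asymptotics of $\frac{k!}{n!}\stirling{n}{k}$ and $\frac{k!}{n!}\stirlingsec{n}{k}$ in the regime $k\sim\alpha n$ (Timashov's formulas, going back to Moser--Wyman), which, combined with the Stirling asymptotics of $\binom{n-1}{k-1}$, give $\lim_n \frac1n\log\P[\Lah(n,k)=j]$ directly for $j\sim\beta n$ in exactly the form~\eqref{eq:I_2}; a weak LDP (Theorem~4.1.11 of~\cite{dembo_zeitouni_book}, plus unimodality at the boundary points $\beta=\alpha$ and $\beta=1$) then identifies~\eqref{eq:I_2} as the rate function, and agreement with~\eqref{eq:I_1} is automatic by uniqueness of rate functions. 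Your approach buys self-containedness (no external Stirling asymptotics) at the price of a heavy implicit-function computation; the paper's buys the explicit formula essentially for free from the product structure $\P[\Lah(n,k)=j]\propto\stirling{n}{j}\stirlingsec{j}{k}$, and also handles the boundary values~\eqref{eq:I_s_alpha}--\eqref{eq:I_s_1} directly rather than through the cancellation of competing logarithmic singularities that you would need to track. If you pursue your route, the one item you must actually verify (rather than assert) is that $\varphi_\alpha'$ maps $\R$ onto $(\alpha,1)$ and that the substitution of $\eee^{t(\beta)}$ and $r(t(\beta))$ into $\beta t-\varphi_\alpha(t)$ reproduces all five terms of~\eqref{eq:I_2}, including the constant $-\alpha\log\alpha-(1-\alpha)\log(1-\alpha)$.
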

\begin{remark}\label{rem:I_alpha_beta_properties}
Using the relation $h(1/x) = h(x)/x$ together with~\eqref{eq:I_2} one easily verifies that the function $I_\alpha(\beta)$ vanishes at  $\beta = h(\alpha) = -\frac{\alpha \log \alpha}{1-\alpha}$, which is not surprising in view of the fact that $\E \Lah(n,k) \sim h(\alpha) n$ by Theorem~\ref{theo:expect_asympt}. A lengthy computation shows that
$$
\frac{\dd}{\dd \beta}I_\alpha(\beta) = \log\left(- \frac{\log h^{-1}(\frac \beta\alpha)}{\log h^{-1}(\beta)}\right), \qquad \beta\in (\alpha,1).
$$
One then verifies that the derivative of $I_\alpha(\beta)$ vanishes at $\beta = h(\alpha)$, is positive for $\beta>h(\alpha)$ and negative for $\beta<h(\alpha)$. Hence, $I_\alpha(\beta)>0$ for all $\beta\neq h(\alpha)$.
\end{remark}
\begin{proof}[Proof of Theorem~\ref{theo:ldp_central}]
By Proposition~\ref{prop:laplace_trafo_conv}, $\varphi_\alpha(t):= \lim_{n\to\infty} \frac 1n \log \E \eee^{t \Lah(n,k)}$ exists finitely for every $t\in \R$. Moreover, it is a differentiable function of $t$; see Remark~\ref{rem:sol_unique}.   A large deviation principle with a rate function given by~\eqref{eq:I_1} is then implied by the G\"artner-Ellis theorem; see Theorem~2.3.6 and Exercise~2.3.20 of~\cite{dembo_zeitouni_book}. Note also that~\eqref{eq:I_1} implies that $I_\alpha$ is convex.

To prove~\eqref{eq:I_2}, one can use the asymptotics of the Stirling numbers $\stirling{n}{k}$ and $\stirlingsec{n}{k}$ in the central regime $k\sim \alpha n$, which is known from the works of Moser and Wyman~\cite{moser_wyman,moser_wyman_second}; see also~\cite[Sections~3.6,3.7]{sachkov_book_combinatorial} and~\cite[Section~VIII.8.2]{Flajolet_book}. A review of these and other asymptotic regimes of  $k$ can be found in~\cite{louchard_first_kind} and~\cite{louchard_second_kind}. For our purposes the most convenient reference is \cite{Timashov}. In particular, when $k \sim \alpha n$ for some $\alpha\in (0,1)$, we have by formulas (14) and (33) in \cite{Timashov}, respectively,
\begin{align}
\lim_{n\to\infty}
\frac 1n \log \left(\frac{k!}{n!}\stirling{n}{k} \right)
&=
-\log (1-h^{-1}(\alpha))  + \alpha \log (-\log h^{-1}(\alpha)), \label{eq:stirling_1_log_asympt}
\\
\lim_{n\to\infty}
\frac 1n \log \left(\frac{k!}{n!}\stirlingsec{n}{k} \right)
&=
\alpha \log (h^{-1}(1/\alpha) - 1) - \log \log h^{-1}(1/\alpha), \label{eq:stirling_2_log_asympt}
\end{align}
where $h^{-1}$ is the inverse of the function $h$ defined in~\eqref{eq:h_def}. Note that $h$ has strictly positive derivative
and satisfies $\lim_{x\downarrow 0} h(x) = 0$ as well as $\lim_{x\uparrow +\infty} h(x) = +\infty$, which implies that $h^{-1}$ is well-defined and strictly increasing. Observe also that $h(1)=1$ (by continuity), hence $h^{-1}(1)=1$ and, consequently, $h^{-1}(\alpha) < 1 < h^{-1}(1/\alpha)$.
Combining these relations with~\eqref{eq:asympt_binomial_coeff}, one obtains in the regime when $k\sim \alpha n$ and $j\sim \beta n$ with $0<\alpha <\beta <1$ that
\begin{multline}\label{eq:rate_LDP_explicit}
\lim_{n\to\infty}
\frac 1n \log \P[\Lah(n,k) = j]
=
\lim_{n\to\infty}
\frac 1n \log \left(\frac 1 {\binom{n-1}{k-1}} \cdot \frac{j!}{n!}\stirling{n}{j} \cdot \frac{k!}{j!}\stirlingsec{j}{k}\right)
\\=
\alpha \log \left(h^{-1}\left(\frac \beta \alpha\right) - 1\right) - \log \left(1- h^{-1}(\beta)\right) - \beta \log \left( - \frac{\log h^{-1}(\frac \beta\alpha)}{\log h^{-1}(\beta)}\right) + \alpha \log \alpha + (1-\alpha) \log (1-\alpha).
\end{multline}
Given this, an LDP with a rate function given by~\eqref{eq:I_2} follows by standard arguments.
Namely, by~\cite[Theorem 4.1.11]{dembo_zeitouni_book} it suffices to check that for all $t\in [\alpha,1]$ we have
\begin{align}
&\inf_{\eps>0} \limsup_{n\to\infty} \frac 1 {n} \log \P \left[\frac 1n \Lah(n,k) \in [t - \eps,t +\eps]\right]
\leq
-I_\alpha(t), \label{eq:weak_LDP1}
\\
&\inf_{\eps>0} \liminf_{n\to\infty} \frac 1 {n} \log \P\left[\frac 1n\Lah(n,k) \in [t-\eps,t+\eps]\right]
\geq
-I_\alpha(t), \label{eq:weak_LDP2}
\end{align}
where $I_\alpha$ is defined by~\eqref{eq:I_2},~\eqref{eq:I_s_alpha} and~\eqref{eq:I_s_1}.
For $t \in (\alpha,1)$ both claims follow immediately from~\eqref{eq:rate_LDP_explicit} together with the union bound.
Let us treat the boundary case $t=\alpha$, the case $t=1$ being similar. One easily checks that the limit of the right-hand side of~\eqref{eq:rate_LDP_explicit} as $\beta\downarrow \alpha$ coincides with $-I_\alpha(\alpha)$ as defined in~\eqref{eq:I_s_alpha}. The unimodality together with the union bound and~\eqref{eq:rate_LDP_explicit} with $\beta= \alpha + \eps$ imply~\eqref{eq:weak_LDP1}. To prove~\eqref{eq:weak_LDP2} use~\eqref{eq:rate_LDP_explicit} with $\beta = \alpha +\eps/2$ and let $\eps\downarrow 0$.
\end{proof}


\begin{remark}
The function $h^{-1}$ appearing above can be expressed as $h^{-1}(z) = z/v(z)$, for $z>0$, where $v(z)>0$ is the solution to the equation $v\eee^{-v} = z \eee^{-z}$ which is different from $v=z$ (and for $z=1$, where there is only one solution, we put $v(1)=1$).
Using the standard notation for the branches of the Lambert $W$-function, see Remark~\ref{rem:weak_threshold} below, we have $h^{-1}(z) = -z/W_0(-z\eee^{-z})$ for $z\geq 1$ and $h^{-1}(z) = -z/W_{-1}(-z\eee^{-z})$ for $0<z\leq 1$.
\end{remark}

\subsection{The intermediate regime}
In this section we prove a weak law of large numbers for the Lah distribution in the intermediate regime, that is, when $k=k(n)$ satisfies
\begin{equation}\label{eq:intermediate}
\lim_{n\to\infty}\frac{k(n)}{n}  = 0
\;\;
\text { but }
\;\;
\lim_{n\to\infty} k(n) = \infty.
\end{equation}
Recall from Theorem~\ref{theo:expect_asympt} that $\E \Lah (n,k) \sim k \log (n/k)$ in this regime.
\begin{theorem}[Weak LLN in the intermediate regime]\label{theo:weak_LLN_intermediate}
Under~\eqref{eq:intermediate} we have
\begin{equation}\label{eq:Var_intermediate}
\lim_{n\to\infty} \frac{\Var\Lah(n,k)}{\E \Lah (n,k)}  = 1.
\end{equation}
Consequently, the following weak law of large numbers holds in $L^2$ and hence in probability:
$$
\frac{\Lah(n,k)}{k \log (n/k)} \ton 1.
$$
\end{theorem}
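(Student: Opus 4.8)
The plan is to first reduce the whole statement to the variance estimate~\eqref{eq:Var_intermediate}, and then deduce that from the law of total variance applied to the representation~\eqref{eq:representation_as_a_sum_over_blocks_of_composition}. Granting~\eqref{eq:Var_intermediate}, the $L^2$ law of large numbers is immediate: writing $\mu_n := \E\Lah(n,k) \sim k\log(n/k)$ by Theorem~\ref{theo:expect_asympt}, one has
$$
\E\left(\frac{\Lah(n,k)}{k\log(n/k)} - 1\right)^2 = \frac{\Var\Lah(n,k) + (\mu_n - k\log(n/k))^2}{(k\log(n/k))^2}.
$$
The first summand is $\sim \Var\Lah(n,k)/\mu_n^2 \sim 1/\mu_n \to 0$ (here we use $k\to\infty$, so that $\mu_n\to\infty$), while the second equals $(\mu_n/(k\log(n/k)) - 1)^2 \to 0$, again by Theorem~\ref{theo:expect_asympt}. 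Convergence in probability then follows from $L^2$-convergence.

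To obtain~\eqref{eq:Var_intermediate} I would condition on the random composition. By~\eqref{eq:representation_as_a_sum_over_blocks_of_composition}, $\Lah(n,k) \overset{{\rm d}}{=} \sum_{j=1}^k Z^{(j)}_{b_j^{(n)}}$, where the $Z^{(j)}_m$ are independent, independent of the composition, and $Z^{(j)}_m \overset{{\rm d}}{=} \Lah(m,1)$. From the Bernoulli representation of $\Lah(m,1)$ recalled after~\eqref{eq:records} we have $\E Z^{(j)}_m = H_m$ and $\Var Z^{(j)}_m = H_m - H_m^{(2)}$. Hence, with $b = (b_1^{(n)},\dots,b_k^{(n)})$, the law of total variance and exchangeability of the blocks give
\begin{align*}
\Var\Lah(n,k)
&= \E\Big[\sum_{j=1}^k \big(H_{b_j^{(n)}} - H_{b_j^{(n)}}^{(2)}\big)\Big] + \Var\Big(\sum_{j=1}^k H_{b_j^{(n)}}\Big)\\
&= \E\Lah(n,k) - k\,\E H_{b_1^{(n)}}^{(2)} + \Var\Big(\sum_{j=1}^k H_{b_j^{(n)}}\Big),
\end{align*}
where I used $\E\sum_j H_{b_j^{(n)}} = \E\Lah(n,k)$. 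Since $1 \le H_m^{(2)} \le \pi^2/6$ uniformly and $k = o(\E\Lah(n,k))$ (because $\log(n/k)\to\infty$), the middle term is $O(k) = o(\E\Lah(n,k))$; as the last term is nonnegative, this already yields the lower bound $\Var\Lah(n,k) \ge \E\Lah(n,k)(1+o(1))$.

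It remains to prove the matching upper bound, i.e.\ $\Var(\sum_{j=1}^k H_{b_j^{(n)}}) = o(\E\Lah(n,k))$, which I would get from two estimates. First, the components of a uniform random composition are the conditional law of the independent geometric variables $G_1,\dots,G_k$ of~\eqref{eq:conitional_law} given their sum, and independent log-concave variables conditioned on their sum form a negatively associated family (Joag-Dev and Proschan); applying this to the increasing maps $m\mapsto H_m$ gives the pairwise bound $\Cov(H_{b_1^{(n)}}, H_{b_2^{(n)}}) \le 0$, whence $\Var(\sum_j H_{b_j^{(n)}}) \le k\,\Var(H_{b_1^{(n)}})$. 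Second, I claim $\Var(H_{b_1^{(n)}}) = O(1)$: using $H_m = \log m + \gamma + O(1/m)$ it suffices to bound $\E[(\log b_1^{(n)} - \log(n/k))^2]$, and this follows from the marginal~\eqref{eq:b_1_n_distribution}, which admits the geometric upper bound $\P[b_1^{(n)} = j] \le \tfrac{k-1}{n-1}\big(\tfrac{n-k+1}{n-1}\big)^{j-1}$, by comparing the resulting sum with the convergent integral $\int_0^\infty (\log u)^2 \eee^{-u}\,\dd u$ after the substitution $u = jk/n$. Combining, $\Var(\sum_j H_{b_j^{(n)}}) = O(k) = o(\E\Lah(n,k))$, which closes the argument for~\eqref{eq:Var_intermediate}.

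The main obstacle is precisely this covariance step: controlling the fluctuations of the conditional mean $\sum_j H_{b_j^{(n)}}$. The negative-association input disposes of it cleanly (and we only use its pairwise consequence), but if one prefers to avoid that tool one can instead compute $\Var\Lah(n,k)$ directly from $\E\Lah(n,k)$ and the explicit second factorial moment in the Corollary following Theorem~\ref{theo:fact_moments_lah}; there the delicate point becomes showing that the two $k^2$-order contributions to $\E[\Lah(n,k)(\Lah(n,k)-1)] - (\E\Lah(n,k))^2$ cancel to leading order, leaving a term of order $\E\Lah(n,k)$. A secondary technical point deserving care is the uniform bound $\Var(H_{b_1^{(n)}}) = O(1)$: the contribution of atypically small blocks, for which $\log(b_1^{(n)} k/n)$ is large and negative, must be shown harmless, which is exactly what the integrability of $(\log u)^2 \eee^{-u}$ near $0$ guarantees.
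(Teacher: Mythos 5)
Your proposal is correct and follows essentially the same route as the paper: the same reduction of the $L^2$ law to the variance asymptotics~\eqref{eq:Var_intermediate}, the same total-variance decomposition of~\eqref{eq:representation_as_a_sum_over_blocks_of_composition} giving the lower bound $\E\Lah(n,k)-O(k)$, and the same two key inputs for the upper bound, namely $\Cov(H_{b_i^{(n)}},H_{b_j^{(n)}})\le 0$ via negative association and $\Var H_{b_1^{(n)}}=O(1)$. The only (harmless) divergences are in how these two sub-lemmas are verified: you invoke the Joag-Dev--Proschan theorem on conditioning independent PF$_2$ variables on their sum (which does apply, since the geometric law is log-concave), where the paper checks the monotonicity criterion via the P\'olya urn coupling, and you bound $\Var H_{b_1^{(n)}}$ by a direct geometric domination of~\eqref{eq:b_1_n_distribution} and a Riemann-sum comparison with $\int_0^\infty(\log u)^2\eee^{-u}\,\dd u$, where the paper proves convergence of $(k/n)b_1^{(n)}$ to $\mathrm{Exp}(1)$ plus uniform integrability.
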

\begin{proof}
It suffices to prove~\eqref{eq:Var_intermediate} because the weak LLN follows from~\eqref{eq:Var_intermediate} by Chebyshev's inequality. Recalling the representation~\eqref{eq:representation_as_a_sum_over_blocks_of_composition}, conditioning on the random uniform composition $(b_1^{(n)},\ldots,b_{k}^{(n)})$ and using the formula of the total variance, we can write
\begin{align}
\Var \Lah (n,k)
&=
\Var \E \left[\sum_{j=1}^{k}Z^{(j)}_{b^{(n)}_j}\,\Big |\,(b_1^{(n)},\ldots,b_{k}^{(n)})\right] +
\E   \Var \left[\sum_{j=1}^{k}Z^{(j)}_{b^{(n)}_j}\,\Big |\,(b_1^{(n)},\ldots,b_{k}^{(n)})\right]\notag\\
&=
\Var \left[\sum_{j=1}^{k} H_{b^{(n)}_j}\right]
+
\E  \left[\sum_{j=1}^{k} \left(H_{b^{(n)}_j} - H_{b^{(n)}_j}^{(2)}\right) \right] \label{eq:tech:est}
\\
&\leq
\Var  \left[\sum_{j=1}^{k} H_{b^{(n)}_j}\right]
+
\E  \left[\sum_{j=1}^{k} H_{b^{(n)}_j} \right],\notag
\end{align}
where we recall that $H_N = \sum_{\ell=1}^N\frac 1 \ell$ is the $N$-th harmonic number and $H_N^{(2)} = \sum_{\ell=1}^N \frac{1}{\ell^2}$.   Again according to~\eqref{eq:representation_as_a_sum_over_blocks_of_composition}, the second term on the right-hand side is nothing else but $\E \Lah (n,k)$. Note in passing that~\eqref{eq:tech:est} together with the estimate $H_N^{(2)}< \pi^2/6$  yields
$$
\Var \Lah (n,k) \geq \E \Lah (n,k) - \pi^2 k /6 \sim k\log (n/k), \qquad n\to\infty,
$$
which proves the lower bound in~\eqref{eq:Var_intermediate}.
To complete the proof of~\eqref{eq:Var_intermediate}, it suffices to check that
\begin{equation}\label{eq:Var_b_1}
\Var H_{b^{(n)}_1} = O(1), \qquad  \text{ as } n\to\infty.
\end{equation}
and
\begin{equation}\label{eq:intermediate_cov_negative}
\Cov \left(H_{b^{(n)}_i}, H_{b^{(n)}_j}\right) \leq 0,
\qquad \text{ for all } 1\leq i <j \leq k.
\end{equation}
Indeed, then we have
$$
\Var \left[\sum_{j=1}^{k} H_{b^{(n)}_j}\right]
=
\sum_{j=1}^k \Var H_{b^{(n)}_j} + 2 \sum_{1\leq i <j \leq k} \Cov \left(H_{b^{(n)}_i}, H_{b^{(n)}_j}\right)
\leq
\sum_{j=1}^k \Var H_{b^{(n)}_j}
=O(k).
$$
Let us prove~\eqref{eq:Var_b_1} first. The main step is the following lemma.
\begin{lemma}\label{lem:b_1_n_distr}
Under assumption~\eqref{eq:intermediate}, the random variables $(k/n) b_{1}^{(n)}$ converge in distribution, as $n\to\infty$, to the standard exponential law $\text{Exp}(1)$.
\end{lemma}
\begin{proof}
Using formula~\eqref{eq:b_1_n_distribution} and the hockey-stick identity we obtain
$$
\P[b_1^{(n)} \geq j] = \frac{\sum_{m=j}^{n-k+1}\binom{n-m-1}{k-2}}{\binom{n-1}{k-1}}=\frac{\sum_{m=k-2}^{n-j-1} \binom{m}{k-2}}{\binom{n-1}{k-1}} = \frac{\binom {n-j}{k-1}}{\binom{n-1}{k-1}},
\qquad j=1,\ldots, n-k+1.
$$
For all $x>0$ it follows that
$$
\P[(k/n) b_1^{(n)}\geq x]
=
\P[b_{1}^{(n)} \geq nx/k]
=
\P[b_1^{(n)}\geq \lceil nx/k\rceil]
=
\frac{\binom {n-j_n}{k-1}}{\binom{n-1}{k-1}}
=
\frac{(n-j_n - k+2)\ldots (n-j_n)}{(n-k+1)\ldots (n-1)}
$$
with $j_n =\lceil nx/k\rceil \sim nx/k$. Taking the logarithm and using that $\log (1-x) = -x +O(x^2)$ as $x\to 0$, we obtain
\begin{align*}
\log \P[(k/n) b_1^{(n)}\geq x]
&=
\sum_{\ell=1}^{k-1} \log \left(1 - \frac{j_n-1}{n-k+\ell}\right)
=
-\sum_{\ell=1}^{k-1}  \left(\frac{j_n-1}{n-k+\ell} + O\left(\frac{j_n^2}{(n-k)^2}\right)\right)\\
&=
- (j_n-1) \sum_{i=n-k+1}^{n-1} \frac{1}{i} + O\left( \frac{kj_n^2}{(n-k)^2}\right)
=
x + o(1),
\end{align*}
and the proof is complete.
\end{proof}
We can now prove~\eqref{eq:Var_b_1} as follows. By the Skorokhod representation theorem, we may pass to a different probability space and, after taking the logarithm, write Lemma~\ref{lem:b_1_n_distr} in the form
\begin{equation}\label{eq:log_b_1_n}
\log b_{1}^{(n)}  - \log (n/k) \toas G,
\end{equation}
where $G:= \log \text{Exp}(1)$. This already suggests that $\Var \log b_{1}^{(n)}$ should be of order $O(1)$. We shall now justify this by a uniform integrability argument. We claim that for every $p\geq 1$,
\begin{equation}\label{eq:p_moment_bounded}
\sup_{n\in \N} \E \left|\log b_{1}^{(n)}  - \log (n/k)\right|^p <\infty.
\end{equation}
To prove this it suffices to check that
\begin{equation}\label{eq:log_+_log_-}
\sup_{n\in \N} \E \log_+^p ((k/n) b_{1}^{(n)})) <\infty,
\qquad
\sup_{n\in \N} \E \log_-^p ((k/n) b_{1}^{(n)})) <\infty,
\end{equation}
where for $x>0$ we defined $\log_+(x) = \max(\log x, 0)$ and $\log_-(x) = \max\{-\log x,0\}$, so that $|\log x| = \log_+x + \log _-x$. The first claim in~\eqref{eq:log_+_log_-} follows from the estimate $\log_+^p x  = O(x)$ together with the identity $\E [(k/n) b_{1}^{(n)}] = 1$ which holds by exchangeability. To prove the second claim in~\eqref{eq:log_+_log_-}, we first observe that $\P[b_1^{(n)} = j]$ is a decreasing function of $j$, which follows from the explicit formula~\eqref{eq:b_1_n_distribution}. Hence,
$$
\E \log_-^p ((k/n) b_{1}^{(n)}))
=
\sum_{j=1}^{\lfloor n/k \rfloor} \log_-^p (jk/n) \, \P[b_1^{(n)} = j]
\leq
\P[b_1^{(n)} = 1] \sum_{j=1}^{\lfloor n/k \rfloor} |\log(jk/n)|^p
=
\frac{k-1}{n-1} \sum_{j=1}^{\lfloor n/k \rfloor} |\log(jk/n)|^p,
$$
which is bounded as a  Riemann sum for $\int_0^1 |\log x|^p \dd x <\infty$. This completes the proof of~\eqref{eq:p_moment_bounded}.

Relation~\eqref{eq:log_b_1_n}, combined with the uniform integrability established in~\eqref{eq:p_moment_bounded}, implies  that
$$
\E |\log b_{1}^{(n)}  - \log (n/k)| = O(1),
\qquad
\E (\log b_{1}^{(n)}  - \log (n/k))^2 = O(1).
$$
Observe that $|\log b_1^{(n)} - H_{b_1^{(n)}}|$ is bounded by a non-random constant. Using the triangle inequality and the inequality $(a+b)^2\leq 2a^2 + 2b^2$, we get
\begin{align*}
& \E \left|H_{b_1^{(n)}} - \log (n/k) \right| \leq  \E |\log b_{1}^{(n)}  - \log (n/k)| +O(1) = O(1),\\
&\E \left(H_{b_1^{(n)}}  - \log (n/k)\right)^2
\leq
2 \E \left(H_{b_1^{(n)}} - \log {b_1^{(n)}}\right)^2
+
2 \E (\log {b_1^{(n)}}  - \log (n/k))^2
=
O(1).
\end{align*}
Applying the triangle inequality to the first relation and expanding the square in the second relation yields
$$
\E  H_{b_{1}^{(n)}} = \log (n/k) + O(1),
\qquad
\E H^2_{b_{1}^{(n)}}    = 2 \log (n/k) \E H_{b_{1}^{(n)}} - \log^2 (n/k) + O(1).
$$
It follows that
\begin{align*}
\Var H_{b_{1}^{(n)}}
&=
\E H^2_{b_{1}^{(n)}} - (\E H_{b_{1}^{(n)}})^2
=
2 \log (n/k) \E H_{b_{1}^{(n)}} - \log^2 (n/k) - (\E H_{b_{1}^{(n)}})^2 + O(1)\\
&=
-(\log (n/k)-\E H_{b_{1}^{(n)}})^2 + O(1)
=
O(1).
\end{align*}

We now proceed to the proof of~\eqref{eq:intermediate_cov_negative}. First we need to recall the notion of negative association; see~\cite{bulinski_shashkin_book}. For $\ell\in\N$ let $\mathcal M(\ell)$ denote the set of all real-valued, bounded, Borel functions on $\R^\ell$ that are nondecreasing in each coordinate. A random vector $(X_1,\ldots,X_k)$ is called \textit{negatively associated} if for every disjoint sets $I,J\subset \{1,\ldots,k\}$ and every functions $f\in \mathcal M(|I|)$, $g\in \mathcal M(|J|)$ it holds that
\begin{equation}\label{eq:negative_association}
\Cov(f(X_i: i\in I), g(X_j: j\in J))\leq 0.
\end{equation}
Although the next lemma, claiming the negative association of the uniform random compositions, sounds classical and will be proved by standard methods, we did not find it among the numerous similar examples listed in~\cite{bulinski_shashkin_book} and~\cite{joag_dev_proschan}.
\begin{lemma}
For every $n\in \N$ and $k\in \{1,\ldots,n\}$,  the random uniform composition $(b_1^{(n)},\ldots, b_{k}^{(n)})$ is negatively associated.
\end{lemma}
\begin{proof}
Recall that $(b_1^{(n)},\ldots,b_{k}^{(n)})$ has the same distribution as the vector $(G_1,\ldots,G_k)$ of i.i.d.\ geometric variables with arbitrary parameter $\theta\in (0,1)$ conditioned on the event $\{G_1+\ldots+G_k = n\}$, see equation~\eqref{eq:conitional_law}.  According to Theorem~1.23 of~\cite{bulinski_shashkin_book} or Theorem~2.6 of~\cite{joag_dev_proschan}, to prove negative association, it suffices to check that for every set $I\subset \{1,\ldots,n\}$ and for every function $f\in \mathcal M(I)$ the function
$$
n\mapsto \E \left[f(G_i, i\in I)\, \Big | \, \sum_{i\in I} G_i = n\right]
$$
is nondecreasing in $n$. There is no loss of generality in assuming that $I=\{1,\ldots,k\}$, so that our task reduces to proving that
$$
n\mapsto \E \left[f(b_1^{(n)},\ldots, b_k^{(n)})\right]
$$
is nondecreasing in $n$. To prove this, it suffices to construct the vectors $(b_1^{(n)},\ldots, b_k^{(n)})$, with $n= k,k+1,\ldots$, on a common probability space in such a way that $b_j^{(n)} \leq b_j^{(n+1)}$ for all $j=1,\ldots,k$ and $n\geq k$. But such a coupling using the P\'olya urn has already been constructed in Section~\ref{subsec:polya}. Clearly, in that coupling the number of balls of each color is nondecreasing in time, and the proof is complete.
\end{proof}
To complete the proof of~\eqref{eq:intermediate_cov_negative}, and thus of Theorem~\ref{theo:weak_LLN_intermediate}, it remains to observe that the function $x\mapsto H_{\lfloor x\rfloor}$, $x\in [1,n]$, is bounded and nondecreasing, which allows us to apply~\eqref{eq:negative_association} with $I=\{i\}$ and $J=\{j\}$.
\end{proof}

\begin{remark}\label{rem:CLT_intermedate_conj}
It is natural to conjecture that in the intermediate regime, all cumulants are asymptotically equivalent to the cumulants of the Poisson distribution with parameter $k\log (n/k)$, meaning that $\kappa_\ell (\Lah (n,k)) \sim k \log (n/k)$ for all $\ell\in \N$. This statement would imply a CLT in the intermediate regime.
More generally, we conjecture that
$$
\frac{\Lah(n,k) - \E \Lah (n,k)}{\sqrt{\Var \Lah (n,k)}} \todistr {\rm N}(0,1)
\;\Leftrightarrow\;
\Var \Lah (n,k) \ton \infty
\;\Leftrightarrow\;
n-k(n)\ton \infty.
$$
Since the existing results are sufficient to obtain a fairly complete picture of the threshold phenomena in the following Section~\ref{sec:convex_hulls_random_walks}, we refrain from studying these more technical questions here.
\end{remark}

\section{Threshold phenomena for convex hulls of random walks}\label{sec:convex_hulls_random_walks}
\subsection{Formula for the expected number of faces}\label{subsec:convex_hulls_random_walks}
Let $\xi_1,\dots,\xi_n$ be a collection of possibly dependent random $d$-dimensional vectors  with partial sums
$$
S_i = \xi_1 + \dots + \xi_i,\quad  1\leq i\leq n,\quad  S_0=0.
$$
The sequence $S_0,S_1,\dots,S_n$ will be referred to as a \emph{random walk}. Let $n\geq d$.
We impose the following assumptions on the joint distribution of  the increments. 
\begin{enumerate}
\item[$(\text{Ex})$] \textit{Exchangeability:} For every permutation $\sigma$ of the set $\{1,\dots,n\}$ we have the following distributional equality of joint distributions:
$$
(\xi_{\sigma(1)},\dots,  \xi_{\sigma(n)}) \eqdistr (\xi_1,\dots,\xi_n).
$$
\item[$(\text{GP})$] \textit{General position:}
For every $1\leq i_1 < \dots < i_d\leq n$ the probability that the vectors $S_{i_1}, \dots,S_{i_d}$ are linearly dependent is $0$.
\end{enumerate}
For example, it is known from~\cite[Proposition~2.5]{KVZ17b} and~\cite[Example~1.1]{KVZ17} that Conditions $(\text{Ex})$ and $(\text{GP})$ are satisfied if $\xi_1,\dots,\xi_n$ are independent identically distributed, and for every hyperplane $H_0\subset \R^d$ passing through the origin we have $\P[S_i\in H_0] = 0$, for all $1\leq i\leq n$. Moreover, the second condition can be replaced by $\P[\xi_1\in H]=0$ for every affine hyperplane $H\subset \R^d$.

We are interested in the convex hull of this random walk which is a random polytope $C_{n,d}=\conv(S_0,S_1,\dots, S_n)\subset \R^d$ defined by~\eqref{eq:def_C_n_d}. For $\ell\in \{0,\ldots, d\}$, the number of $\ell$-dimensional faces of the polytope $C_{n,d}$ is denoted by $f_\ell(C_{n,d})$. As has already been mentioned in the introduction, see formula~\eqref{eq:E_F_k_C_n_main_theorem_introduct}, the following explicit formula has been obtained in~\cite{KVZ17}.

\begin{theorem}[Exact formula for expected face numbers]\label{theo:expected_walk}
Let $(S_i)_{i=0}^{n}$ be a random walk in $\R^d$, with $n\geq d$,  whose increments satisfy conditions $(\text{Ex})$ and $(\text{GP})$.
Then, for all $k \in \{1,\ldots,d\}$, 
\begin{equation}\label{eq:E_F_k_C_n_main_theorem}
\E f_{k-1}(C_{n,d}) = \frac{2\cdot (k-1)!}{n!} \sum_{l=0}^{\infty}\stirling{n+1}{d-2l}  \stirlingsec{d-2l}{k}.
\end{equation}
\end{theorem}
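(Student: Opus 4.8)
The plan is to reproduce the argument of~\cite{KVZ17}, whose engine is the distribution-free reduction of~\cite{KVZ17b} expressing face probabilities of convex hulls of exchangeable random walks through the conic intrinsic volumes of Weyl chambers. I would begin with linearity of expectation. Since $(\text{GP})$ guarantees that almost surely every $(k-1)$-face is the convex hull of exactly $k$ of the points $S_0,\dots,S_n$, we may write
$$
\E f_{k-1}(C_{n,d}) = \sum_{0\le i_1<\cdots<i_k\le n} \P\bigl[\conv(S_{i_1},\dots,S_{i_k}) \text{ is a } (k-1)\text{-face of } C_{n,d}\bigr].
$$
A tuple forms a face precisely when it admits a supporting hyperplane, i.e.\ when there is a direction $u\in\R^d$ for which $\langle u,S_{i_1}\rangle=\cdots=\langle u,S_{i_k}\rangle > \langle u,S_i\rangle$ for all remaining indices $i$. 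The task is thus to evaluate each such probability.

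The crucial and hardest step is the distribution-free reduction borrowed from~\cite{KVZ17b}: under $(\text{Ex})$ and $(\text{GP})$ each face probability is independent of the increment law and equals the probability that a uniform random linear image of a suitable Weyl chamber of type $A$ meets a fixed linear subspace only at the origin. The exchangeability hypothesis is exactly what permits replacing the true increment distribution by a generic (e.g.\ Gaussian) one without altering the probability, through a Sparre Andersen-type cyclic-symmetry argument; encoding the face event in terms of the sign pattern of the projected increments $\langle u,\xi_\ell\rangle$ and recognizing the resulting admissible region as (a product of) braid-arrangement Weyl chambers is the technical heart of the matter, and I expect this to be the main obstacle.

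Granting this reduction, I would finish by a conic integral-geometric computation. The conic intrinsic volumes of the chamber $\{x_1\ge\cdots\ge x_{n+1}\}\subset\R^{n+1}$ are the normalized Stirling numbers of the first kind, $v_j=\frac{1}{(n+1)!}\stirling{n+1}{j}$, which explains the appearance of $\stirling{n+1}{\cdot}$ and the overall prefactor, the remaining constants $2(k-1)!/n!$ being produced by the tuple-counting bookkeeping. Applying the conic kinematic formula together with the Gauss--Bonnet relation $\sum_j(-1)^j v_j=0$ produces the partial sum over every second index, $\sum_{l\ge 0} v_{d-2l}$, together with the accompanying factor $2$. Finally, the Stirling numbers of the second kind $\stirlingsec{d-2l}{k}$ arise from the secondary enumeration: specifying which $k$ of the points lie on the face organizes the underlying record/cycle structure into $k$ classes, exactly the count realized by $\stirlingsec{d-2l}{k}$. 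Assembling these factors and summing over all index tuples yields~\eqref{eq:E_F_k_C_n_main_theorem}; once the reduction of the middle step is in place, the identification of the Stirling numbers is routine bookkeeping.
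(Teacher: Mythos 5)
The paper does not actually prove Theorem~\ref{theo:expected_walk}: it is imported verbatim from~\cite{KVZ17}, so there is no internal argument to compare yours against. Judged on its own terms, your outline correctly identifies the architecture of the proof given in~\cite{KVZ17}: linearity of expectation over $k$-tuples, a distribution-free evaluation of each face probability via exchangeability, the conic intrinsic volumes $\upsilon_j(A^{(n+1)})=\frac{1}{(n+1)!}\stirling{n+1}{j}$ of the type-$A$ Weyl chamber, and a secondary aggregation over the $k$-dimensional faces of that chamber (whose tangent cones are products $A^{(i_1)}\times\cdots\times A^{(i_k)}$ over compositions of $n+1$ into $k$ parts) which is exactly what produces the factor $\stirlingsec{d-2l}{k}$ --- compare Theorem~\ref{theo:conic_intrinsic_volumes_sums} of this very paper, which records the identity $\sum_{F\in\mathcal F_k}\upsilon_j(T_F(A^{(n)}))=\frac{k!}{n!}\stirling{n}{j}\stirlingsec{j}{k}$ that closes the bookkeeping. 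One small correction: the factor $2$ and the sum over every second index come from the conic Crofton formula, $\P[C\cap L\neq\{0\}]=2\sum_{l\geq 0}\upsilon_{\dim L +1+2l}(C)$ for a generic subspace $L$, rather than from ``kinematic formula plus Gauss--Bonnet''; Gauss--Bonnet ($\sum_j(-1)^j\upsilon_j=0$) only serves to pass between the upper and lower half-tail sums, as in the derivation of~\eqref{eq:E_f_k_Lah_dual}.

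The genuine gap is the one you yourself flag: the entire content of the theorem lives in the distribution-free reduction, i.e.\ the claim that under $(\text{Ex})$ and $(\text{GP})$ the probability that $\conv(S_{i_1},\ldots,S_{i_k})$ is a face of $C_{n,d}$ does not depend on the increment law and equals the non-absorption probability of a generic $(n+1-d)$-codimensional subspace by the tangent cone of $A^{(n+1)}$ at an appropriate face. Writing ``granting this reduction'' and then doing the Stirling-number bookkeeping proves nothing that was in doubt: the bookkeeping is routine, whereas the reduction requires the full Sparre Andersen-type machinery of~\cite{KVZ17b} (passing to the arrangement of hyperplanes generated by partial sums of generic vectors, counting chambers via Zaslavsky/Whitney, and identifying the count as independent of the data). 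Until that step is carried out, the proposal is a road map to the proof in~\cite{KVZ17}, not a proof.
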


In the following it will be more convenient to consider the polytope $C_{n-1,d}$ since it is defined as a convex hull of $n$ points. It is known~\cite[Remark~1.5]{KVZ17} that this polytope is simplicial with probability $1$. This means that for every $k\in \{1,\ldots, d\}$, all $(k-1)$-dimensional faces of $C_{n-1,d}$ are simplices. The maximal possible number of $(k-1)$-dimensional faces of $C_{n-1,d}$ is $\binom n k$. If the number of $(k-1)$-faces attains this bound, the polytope $C_{n-1,d}$ is said to be $k$-neighborly. We shall therefore be interested in the asymptotic behavior of the quantity $f_{k-1}(C_{n-1,d})/\binom nk$.
The formula of Theorem~\ref{theo:expected_walk} can be stated as follows:
\begin{equation}\label{eq:E_f_k_Lah}
\frac {\E f_{k-1}(C_{n-1,d})}{\binom{n}{k}}=\frac{2}{L(n,k)} \sum_{l=0}^{\infty}\stirling{n}{d-2l}  \stirlingsec{d-2l}{k}=2 \P[\Lah(n,k) \in \{d,d-2,d-4,\ldots\}].
\end{equation}
It follows from the well-known identities
$$
\sum_{j=k}^n (-1)^{n-j} \stirling{n}{j} \stirlingsec{j}{k} = 0
\quad
(\text{if } n>k),
\qquad
\sum_{j=k}^n  \stirling {n}{j} \stirlingsec{j}{k} = L(n,k)
$$
that
$$
\P[\Lah(n,k) \text{ takes even value}] = \P[\Lah(n,k) \text{ takes odd value}] = \frac{1}{2}
\quad
\text { for } n>k.
$$
Consequently, we can rewrite~\eqref{eq:E_f_k_Lah} as follows:
\begin{equation}\label{eq:E_f_k_Lah_dual}
1 - \frac {\E f_{k-1}(C_{n-1,d})}{\binom nk}
=
2 \P[\Lah(n,k) \in \{d+2,d+4,\ldots\}].
\end{equation}

In the rest of this section we shall use~\eqref{eq:E_f_k_Lah} and~\eqref{eq:E_f_k_Lah_dual} to uncover threshold phenomena for convex hulls of random walks. To describe our problem, let us fix some very large dimension $d$. Let us also take some $k\in \{1,\ldots,d\}$ which may be either fixed, or depend on $d$ in some way. We ask whether the number of $(k-1)$-dimensional faces of $C_{n-1,d-1}$ is equal or close to the maximal possible number $\binom nk$. If $n$ is not much larger than $d$, we expect  $f_{k-1}(C_{n-1,d-1})$ to be close or even equal to $\binom nk$. On the other hand, if $n$ is sufficiently large, we expect $f_{k-1}(C_{n-1,d-1})/ \binom nk$ to approach $0$. Somewhere in between there should be a threshold at which a phase transition occurs. Following Donoho and Tanner~\cite{donoho_neighborliness_proportional,donoho_tanner_neighborliness,donoho_tanner,DonohoTanner} we distinguish between weak and strong thresholds, which are statements about $\E f_{k-1}(C_{n-1,d-1})/ \binom nk$ and $\binom nk - \E f_{k-1}(C_{n-1,d-1})$, respectively. As we shall see in the following, the phase transitions occur surprisingly late. For example, for fixed $k$ the weak transition occurs if $n$ is near $\eee^{d/k}$.


\subsection{Threshold phenomena for face numbers: the regime of constant \texorpdfstring{$k$}{k}}
We begin by analyzing the case in which $k$ is constant.
\begin{theorem}[Weak threshold in the constant $k$ regime]\label{theo:phase_trans_expect_constant_k}
Let $d\to\infty$ and $n=n(d)> d$ be a function of $d$ such that
$$
\gamma := \lim_{d\to\infty} \frac{\log n(d)}{d} \in [0,+\infty].
$$
Then, for every fixed $k\in\N$, we have
$$
\lim_{d\to\infty} \frac {\E f_{k-1}(C_{n-1,d})}{\binom nk}
=
\begin{cases}
1, &\text{ if } \gamma< 1/k,\\
0, &\text{ if } \gamma >1/k.
\end{cases}
$$
Moreover, in the critical case when $\gamma=1/k$, more precisely if $\log n(d) = \frac 1 k (d + c\sqrt d + o(\sqrt d))$ for some fixed $k\in \N$ and some constant $c\in\R$, then
$$
\lim_{d\to\infty} \frac {\E f_{k-1}(C_{n-1,d})}{\binom nk}  = \frac 1 {\sqrt {2\pi}} \int_{-\infty}^{-c} \eee^{-x^2/2} \dd x.
$$
\end{theorem}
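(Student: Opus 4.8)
The plan is to reduce everything to the Lah distribution through the identity~\eqref{eq:E_f_k_Lah}, which gives $\E f_{k-1}(C_{n-1,d})/\binom nk = 2A_d$ with the abbreviation $A_d := \P[\Lah(n,k) \in \{d,d-2,d-4,\ldots\}]$; I also write $p_j := \P[\Lah(n,k)=j]$. Throughout, $n=n(d)\to\infty$ while $k$ is fixed, so the following facts are available: the mean and variance asymptotics $\E\Lah(n,k)\sim k\log n$ and $\Var\Lah(n,k)\sim k\log n$ (Theorem~\ref{theo:expect_asympt} together with the variance computation in the proof of Proposition~\ref{prop:slln}), the central limit theorem (Theorem~\ref{theo:clt}), the unimodality of $\Lah(n,k)$ (Corollary~\ref{cor:unimodal}), the local limit theorem (Theorem~\ref{theo:local_limit_theo}), and the parity symmetry $\P[\Lah(n,k)\text{ even}]=\P[\Lah(n,k)\text{ odd}]=\tfrac12$, valid for $n>k$.

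For the two law-of-large-numbers cases I would avoid any delicate parity analysis. Using the parity symmetry I write $A_d=\tfrac12-\P[\Lah(n,k)>d,\ \Lah(n,k)\equiv d\pmod 2]$ and bound the subtracted term by $\P[\Lah(n,k)>d]$. When $\gamma<1/k$ one has $k\log n\le (k\gamma+o(1))d$ with $k\gamma<1$, so $d-\E\Lah(n,k)$ grows of order $d$ while the standard deviation is only $O(\sqrt d)$; Chebyshev's inequality then yields $\P[\Lah(n,k)>d]\to 0$, whence $A_d\to\tfrac12$ and $2A_d\to 1$. Symmetrically, when $\gamma>1/k$ one has $\E\Lah(n,k)-d\to+\infty$ of order at least $d$, so $\P[\Lah(n,k)\le d]\to 0$ by Chebyshev, and since $A_d\le\P[\Lah(n,k)\le d]$ we get $2A_d\to 0$. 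Both arguments cover the endpoints $\gamma=0$ and $\gamma=+\infty$ without change.

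For the critical window I would split $2A_d=\P[\Lah(n,k)\le d]+S_d$, where $S_d:=\sum_{j\le d}(-1)^{d-j}p_j$ is the parity defect. The first summand converges to the claimed Gaussian integral by the CLT: under $\log n(d)=\tfrac1k(d+c\sqrt d+o(\sqrt d))$ the standardized threshold satisfies $(d-k\log n)/\sqrt{k\log n}\to -c$, and since the limiting normal distribution function is continuous, P\'olya's theorem upgrades the weak convergence to uniform convergence of distribution functions and allows the moving threshold, giving $\P[\Lah(n,k)\le d]\to\frac{1}{\sqrt{2\pi}}\int_{-\infty}^{-c}e^{-x^2/2}\,\dd x$.

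It therefore remains to show $S_d\to 0$, which I regard as the \emph{crux}. Put $T_d:=\sum_{j\le d}(-1)^j p_j$, so that $|S_d|=|T_d|$; the parity symmetry gives $T_\infty=P_{n,k}(-1)=0$, hence also $T_d=-\sum_{j>d}(-1)^j p_j$. Let $M$ be a mode of $\Lah(n,k)$. If $d\le M$, then $(p_j)_{j\le d}$ is nondecreasing, and reindexing from the top expresses $T_d$ as an alternating series with nonincreasing terms, so $|T_d|\le p_d$; if $d\ge M$, then $(p_j)_{j>d}$ is nonincreasing, and the alternating-series bound applied to $-\sum_{j>d}(-1)^j p_j$ gives $|T_d|\le p_{d+1}$. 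In either case $|S_d|\le\max_j p_j$, and $\max_j p_j\to 0$ because the local limit theorem yields $\sup_m p_m\le \frac{1}{\sqrt{2\pi k\log n}}+o\!\left(\frac{1}{\sqrt{\log n}}\right)\to 0$. Thus $S_d\to 0$ and $2A_d\to\frac{1}{\sqrt{2\pi}}\int_{-\infty}^{-c}e^{-x^2/2}\,\dd x$. The main obstacle is exactly this control of $S_d$: a direct termwise estimate of the alternating sum via the local limit theorem fails, since the $O(d)$ error terms need not cancel, and it is the combination of the \emph{exact} parity symmetry $T_\infty=0$ with unimodality that makes the clean bound $|S_d|\le\max_j p_j$ available.
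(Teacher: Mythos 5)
Your proposal is correct, but it deviates from the paper's proof at both stages, so a comparison is worthwhile. For the two law-of-large-numbers cases the paper bounds $\E f_{k-1}(C_{n-1,d})/\binom nk$ (resp.\ its complement) by $2\P[\Lah(n,k)\le d]$ (resp.\ $2\P[\Lah(n,k)\ge d]$) exactly as you do, but then invokes the precise large-deviation asymptotics of Theorem~\ref{theo:precise_ldp} with $x_n=d/(k\log n)\to 1/(\gamma k)$; your Chebyshev argument, resting only on $\E\Lah(n,k)\sim\Var\Lah(n,k)\sim k\log n$, is more elementary and, as you note, covers the endpoints $\gamma=0$ and $\gamma=+\infty$ with no extra care (the paper's route yields rates of decay as a by-product, which you forgo). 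In the critical window the paper also reduces to $\P[\Lah(n,k)\le d]$ plus a parity correction, but it controls the correction by a sandwich: using Proposition~\ref{prop:mode} it locates the largest mode at $m_{n,k}=k\log n+O(1)=d+c\sqrt d+o(\sqrt d)$, splits into the cases $c>0$, $c<0$ (passing to complements via~\eqref{eq:E_f_k_Lah_dual}) and $c=0$, and in each case squeezes $2\P[\Lah(n,k)\in\{d,d-2,\ldots\}]$ between $\P[\Lah(n,k)\le d-1]$ and $\P[\Lah(n,k)\le d+1]$ by monotonicity on one side of the mode, finishing with the CLT. Your decomposition $2A_d=\P[\Lah(n,k)\le d]+S_d$ with the bound $|S_d|\le\max_j p_j$ — obtained from unimodality, the alternating-series estimate applied either to $\sum_{j\le d}(-1)^j p_j$ or, via the exact identity $P_{n,k}(-1)=0$, to the tail $\sum_{j>d}(-1)^jp_j$ — is a clean alternative that avoids the case split on the sign of $c$ and does not need the mode-location result at all, at the price of invoking the local limit theorem (Theorem~\ref{theo:local_limit_theo}) to get $\sup_m p_m\to 0$; the paper needs only unimodality plus Proposition~\ref{prop:mode}. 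Both chains of citations are available in the paper, and I found no gap in your argument.
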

\begin{proof}
Consider first the case $\gamma > 1/k$. By~\eqref{eq:E_f_k_Lah} we have
$$
\frac {\E f_{k-1}(C_{n-1,d})}{\binom nk}
\leq
2 \P[\Lah(n,k)\leq d]
=
2 \P[\Lah(n,k)\leq  k x_n \log n]
$$
with $x_n:= d/(k\log n) \to 1/(\gamma k) < 1$ as $d\to\infty$. By Theorem~\ref{theo:precise_ldp} (or Theorem~\ref{theo:clt}), the probability on the right-hand side goes to $0$.
Let now $\gamma <1/k$.
Then, by~\eqref{eq:E_f_k_Lah_dual},
\begin{align*}
1 - \frac {\E f_{k-1}(C_{n-1,d})}{\binom nk}
\leq
2 \P[\Lah(n,k) \geq d]
=
2 \P[\Lah(n,k)\geq  k x_n \log n]
\end{align*}
with $x_n:= d/(k\log n) \to 1/(\gamma k) > 1$.
By Theorem~\ref{theo:precise_ldp}, the probability on the right-hand side goes to $0$.

Consider now the critical case, that is, let $\log n = \frac 1k (d+ c\sqrt d + o(\sqrt d))$. If the right-hand side of~\eqref{eq:E_f_k_Lah} could be replaced by the simpler quantity $\P[\Lah(n,k)\leq d]$, the claim could be deduced from the central limit theorem as follows:
\begin{align}
\P[\Lah(n,k)\leq d]
&=
\P\left[\frac{\Lah(n,k) - k\log n}{\sqrt{k\log n}}\leq \frac{d - k\log n}{\sqrt{k\log n}} \right]\notag\\
&=
\P\left[\frac{\Lah(n,k) - k\log n}{\sqrt{k\log n}}\leq - c + o(1)\right]\notag\\
&\tond
\frac 1 {\sqrt {2\pi}} \int_{-\infty}^{-c} \eee^{-x^2/2} \dd x\label{eq:proof_crit_case_clt}
\end{align}
by Theorem~\ref{theo:clt}. Unfortunately, the right-hand side of~\eqref{eq:E_f_k_Lah} runs over $d-\ell$ with even $\ell=0,2,\ldots$ (and there is a factor $2$ compensating for that), and more efforts are needed to prove the claim. Recall that the Lah distribution is unimodal by Corollary~\ref{cor:unimodal}. If $m_{n,k}$ denotes the largest mode of $\Lah(n,k)$, then by Proposition~\ref{prop:mode} we have
$$
m_{n,k}=k\log n +O(1)=d + c\sqrt d + o(\sqrt d).
$$
If $c>0$, then it follows that $d<m_{n,k}$ for sufficiently large $d$ and hence
$$
\P[\Lah(n,k) \leq d-1] \leq 2 \P[\Lah(n,k) \in \{d,d-2,d-4,\ldots\}] \leq \P[\Lah(n,k) \leq d+1].
$$
Applying the CLT to both sides as explained in~\eqref{eq:proof_crit_case_clt} completes the proof for $c>0$. If $c<0$, we can pass to the complementary events via~\eqref{eq:E_f_k_Lah_dual} and argue analogously. Finally, the case $c=0$ follows by a sandwich argument.
\end{proof}

The above Theorem~\ref{theo:phase_trans_expect_constant_k} deals with expected face numbers only. More interesting is to prove that neighborliness holds with high probability rather than only in expectation. The next result is a strong threshold in the terminology of Donoho and Tanner~\cite{donoho_neighborliness_proportional,donoho_tanner_neighborliness,donoho_tanner,DonohoTanner}.

\begin{theorem}[Strong threshold in the constant $k$ regime]
Fix $k\in \N$. Let $n=n(d) > d$ be an integer sequence.  If $n(d) \leq  \eee^{d/(k(\eee + \eps))}$ for some $\eps>0$ and all sufficiently large $d$, then
\begin{equation}\label{eq:strong_thresh_fin_k1}
\binom{n}{k}  - \E f_{k-1}(C_{n-1,d}) = O(n^{-\eta})
\end{equation}
for some $\eta>0$, and the polytope $C_{n-1,d}$ is $k$-neighborly with probability approaching $1$, more precisely
\begin{equation}\label{eq:strong_thresh_fin_k2}
\P\left[f_{k-1}(C_{n-1,d}) = \binom nk \right] = 1 - O(n^{-\eta}).
\end{equation}
If, on the other hand, $n(d) \geq  \eee^{d/(k(\eee - \eps))}$ for some $\eps\in (0,\eee)$ and all sufficiently large $d$, then
\begin{equation}\label{eq:strong_thresh_fin_k3}
\lim_{d\to\infty} \left(\binom{n}{k} - \E f_{k-1}(C_{n-1,d})\right) = +\infty.
\end{equation}
\end{theorem}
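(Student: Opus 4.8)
The plan is to route everything through the identities \eqref{eq:E_f_k_Lah} and \eqref{eq:E_f_k_Lah_dual}, which give
$$
\binom nk - \E f_{k-1}(C_{n-1,d}) = 2\binom nk\,\P[\Lah(n,k)\in\{d+2,d+4,\ldots\}],
$$
so that the problem reduces to tail estimates for $\Lah(n,k)$ with $k$ fixed. Writing $x_n:=d/(k\log n)$, the two hypotheses read $x_n\geq \eee+\eps$ and $x_n\leq \eee-\eps$ respectively after taking logarithms. The conceptual reason the threshold sits at $\eee^{d/(k\eee)}$ is that, for a \emph{fixed} $x>1$, Theorem~\ref{theo:precise_ldp} together with $\binom nk\sim n^k/k!$ makes $\binom nk\,\P[\Lah(n,k)\geq kx\log n]$ of polynomial order $n^{kx(1-\log x)}$ (up to a $\sqrt{\log n}$ factor), and the exponent $x(1-\log x)$ is positive for $x<\eee$, vanishes at $x=\eee$, and is negative for $x>\eee$.

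For the first part I would bound $\P[\Lah(n,k)\in\{d+2,\ldots\}]\leq\P[\Lah(n,k)\geq d]$ and apply a Chernoff bound with the \emph{fixed} parameter $\beta=1$. By the mod-Poisson convergence (Theorem~\ref{theo:mod_poi}) evaluated at $z=1\in\mathcal{D}_{Lah}$ one has $\E\eee^{\Lah(n,k)}\leq C\,n^{k(\eee-1)}$ for a constant $C=C(k)$, whence $\P[\Lah(n,k)\geq d]\leq C\,\eee^{-d}n^{k(\eee-1)}=C\,n^{k(\eee-1)-kx_n}$. Multiplying by $\binom nk\leq n^k$ gives $\binom nk-\E f_{k-1}(C_{n-1,d})=O(n^{k(\eee-x_n)})=O(n^{-k\eps})$, which is \eqref{eq:strong_thresh_fin_k1} with $\eta=k\eps$. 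Note that Theorem~\ref{theo:precise_ldp} cannot be quoted directly here, since $x_n$ need not converge and may be unbounded; using a fixed $\beta$ is exactly what makes the estimate uniform. Then \eqref{eq:strong_thresh_fin_k2} follows by the first-moment method: because $\binom nk-f_{k-1}(C_{n-1,d})$ is a nonnegative integer-valued random variable, Markov's inequality yields $\P[f_{k-1}(C_{n-1,d})\neq\binom nk]\leq\E[\binom nk-f_{k-1}(C_{n-1,d})]=O(n^{-\eta})$.

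For the second part I must produce a lower bound on the even-offset sum $A:=\P[\Lah(n,k)\in\{d+2,d+4,\ldots\}]$ and show $\binom nk\,A\to\infty$; the parity restriction is the real difficulty, and I would resolve it using log-concavity (Proposition~\ref{prop:log_concave}, hence unimodality, Corollary~\ref{cor:unimodal}) together with the mode location $m_{n,k}=k\log n+O(1)$ from Proposition~\ref{prop:mode}, splitting into two cases. If $d\geq m_{n,k}$, the mass function is nonincreasing beyond $d$, so pairing consecutive terms gives $A\geq\tfrac12\P[\Lah(n,k)\geq d+2]$; since $x_n\leq\eee-\eps$, fixing $x^\ast=\eee-\tfrac\eps2\in(1,\eee)$ and using monotonicity of the upper tail gives $\P[\Lah(n,k)\geq d+2]\geq\P[\Lah(n,k)\geq\lfloor kx^\ast\log n\rfloor]$, and Theorem~\ref{theo:precise_ldp} applied at the \emph{fixed} value $x^\ast$ yields $\binom nk\,A\gtrsim n^{kx^\ast(1-\log x^\ast)}/\sqrt{\log n}\to\infty$ because $x^\ast<\eee$. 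If instead $d<m_{n,k}$, then $d\leq k\log n+O(1)$, so the central limit theorem (Theorem~\ref{theo:clt}) gives $\P[\Lah(n,k)>d]\geq\tfrac13$ for large $d$; combining this with the elementary bound $|A-B|\leq 2\max_j\P[\Lah(n,k)=j]$ valid for any unimodal sequence (where $B$ is the odd-offset sum and $A+B=\P[\Lah(n,k)>d]$) and with the local limit theorem (Theorem~\ref{theo:local_limit_theo}), which forces $\max_j\P[\Lah(n,k)=j]\to0$, I obtain $A\geq\tfrac16-o(1)$, so again $\binom nk\,A\to\infty$, proving \eqref{eq:strong_thresh_fin_k3}.

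The main obstacle is the second part: the face-number formula only sees $\Lah(n,k)$ along the arithmetic progression $\{d+2,d+4,\ldots\}$, so a naive tail bound a priori loses a factor as large as the entire upper tail, and one cannot simply replace $A$ by $\P[\Lah(n,k)>d]$. The way around this is structural—log-concavity controls the alternating (even minus odd) tail sum by the maximal atom, while the precise large deviation asymptotics and the known mode location allow me to compare $d$ against a fixed reference value $x^\ast<\eee$ uniformly, without ever assuming that $x_n$ converges.
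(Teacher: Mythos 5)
Your proposal is correct, and while it shares the paper's overall architecture---reducing both directions to tail estimates for $\Lah(n,k)$ via \eqref{eq:E_f_k_Lah} and \eqref{eq:E_f_k_Lah_dual}, deducing \eqref{eq:strong_thresh_fin_k2} from \eqref{eq:strong_thresh_fin_k1} by the first-moment inequality, and using unimodality to handle the parity-restricted sum---both halves are implemented by genuinely different estimates. For the upper bound, the paper invokes the precise large deviation asymptotics of Theorem~\ref{theo:precise_ldp} at the point $\eee+\eps$ (implicitly first using tail monotonicity to replace the possibly non-convergent $x_n$ by $\eee+\eps$), whereas you run a Chernoff bound at the fixed parameter $z=1$ using only the mod-Poisson convergence of Theorem~\ref{theo:mod_poi}; since the optimal Chernoff parameter at the threshold $x=\eee$ is exactly $z=\log\eee=1$, this suboptimal bound still detects the correct threshold, is automatically uniform in $x_n$, and yields the explicit exponent $\eta=k\eps$. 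For the lower bound, the paper reduces to the case $d>(1+\eta)m_{n,k}$ by observing that increasing $d$ by an even number shrinks the set $\{d+2,d+4,\ldots\}$ while preserving the hypothesis, and then pairs consecutive terms beyond the mode; you instead split on whether $d\geq m_{n,k}$ or $d<m_{n,k}$, using the same pairing in the first case and, in the second, combining the CLT (Theorem~\ref{theo:clt}), the local limit theorem (Theorem~\ref{theo:local_limit_theo}) and the elementary bound $|A-B|\leq 2\max_j\P[\Lah(n,k)=j]$ for alternating tail sums of unimodal sequences to show the even-offset tail carries mass at least $\tfrac16-o(1)$. The paper's even-shift reduction is slicker and avoids the local limit theorem; your case analysis is more direct and self-contained. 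Both routes are valid, up to the standard integer-rounding conventions needed to apply Theorem~\ref{theo:precise_ldp}, which the paper also suppresses.
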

\begin{proof}
Let us prove~\eqref{eq:strong_thresh_fin_k1} and~\eqref{eq:strong_thresh_fin_k2}.
Since on the event $f_{k-1}(C_{n-1,d}) \neq \binom nk$ we even have $\binom nk - f_{k-1}(C_{n-1,d}) \geq 1$, the following estimate holds:
$$
\binom nk -\E f_{k-1}(C_{n-1,d}) = \E\left[\binom nk - f_{k-1}(C_{n-1,d})\right] \geq  \P\left[f_{k-1}(C_{n-1,d}) \neq \binom nk \right].
$$
It will be convenient to write this inequality in the form
\begin{equation}\label{eq:est_for_strong_threshold}
\P\left[f_{k-1}(C_{n-1,d}) \neq \binom nk \right] \leq \binom nk \left(1- \frac{\E f_{k-1}(C_{n-1,d})}{\binom nk}\right).
\end{equation}
The same argumentation as in the proof of Theorem~\ref{theo:phase_trans_expect_constant_k} yields then
$$
\P\left[f_{k-1}(C_{n-1,d}) \neq \binom nk \right]
\leq
\binom nk \left(1- \frac{\E f_{k-1}(C_{n-1,d})}{\binom nk}\right)
\leq 2 n^k \P[\Lah(n,k)\geq  k x_n \log n],
$$
where $x_n:= d/(k\log n) \geq \eee +\eps$. Note that the convex function $f(x) := x\log x - x +1$, $x>0$, satisfies $f(\eee) = 1$ and  $f(\eee + \eps)>1$.  It follows from Theorem~\ref{theo:precise_ldp} that
$$
\P\left[f_{k-1}(C_{n-1,d}) \neq \binom nk \right]
\leq
2 n^k n^{- k f(\eee + \eps) + o(1)}
=
2 n^{k(1 - f(\eee + \eps)) + o(1)}
=
O(n^{-\eta}),
$$
which proves~\eqref{eq:strong_thresh_fin_k1} and~\eqref{eq:strong_thresh_fin_k2}.
To prove~\eqref{eq:strong_thresh_fin_k3}, we assume that  $n \geq  \eee^{d/(k(\eee - \eps))}$. Making $\eps$ smaller, if necessary, we may assume that $\eps\in (0,\eee-1)$. By~\eqref{eq:E_f_k_Lah_dual}, we have
\begin{equation}\label{eq:wspom123}
\binom{n}{k} - \E f_{k-1}(C_{n-1,d}) = \binom{n}{k} \left(1 - \frac {\E f_{k-1}(C_{n-1,d})}{\binom nk}\right)
=
2 \binom{n}{k} \P[\Lah(n,k) \in \{d+2,d+4,\ldots\}].
\end{equation}
Let $m_{n,k}$ be the largest mode of $\Lah(n,k)$. Then $m_{n,k} = k \log n + O(1)$ by Proposition~\ref{prop:mode}. Fix some $\eta\in (0, \eee-\eps - 1)$. Without loss of generality we may assume that $d > (1+\eta) m_{n,k}$ for all sufficiently large $d$. Indeed, if $d \leq (1+\eta)m_{n,k}$ along some subsequence, then we may increase $d$ by an even number without destroying the condition $n \geq  \eee^{d/(k(\eee - \eps))}$ and such that $d$ becomes larger than $(1+\eta)m_{n,k}$. Since the right-hand side of~\eqref{eq:wspom123} decreases under this operation, and since we intend to show that it diverges to infinity, we can and do assume that $d > (1+\eta) m_{n,k}$ for all sufficiently large $d$. Using the unimodality of the Lah distribution, we have
$$
\binom{n}{k} - \E f_{k-1}(C_{n-1,d})
\geq
\binom{n}{k} \P[\Lah(n,k) \geq d+2]
\geq
c n^k \P[\Lah(n,k) \geq k x_n \log n],
$$
where $c>0$ is sufficiently small and $x_n:= d/(k\log n)$ has all its limit points in $[1+\eta, \eee-\eps]$.
Then, Theorem~\ref{theo:precise_ldp} yields~\eqref{eq:strong_thresh_fin_k3}.
\end{proof}

Let us finally mention a conjecture which we verified numerically for all $d\leq 50$, $1\leq k\leq d$, $n\leq k+100$. Its part (b) is quite surprising in view of~\eqref{eq:E_f_k_Lah_dual} and Proposition~\ref{prop:stoch_monotonicity}.
\begin{conjecture}
Fix $d\in \N$ and $k\in\{1,\ldots,d\}$. Then:
\begin{itemize}
\item[(a)] The function $n\mapsto \E f_{k-1}(C_{n-1,d})$ is increasing for $n\geq k$.
\item[(b)] The function $n\mapsto \frac {\E f_{k-1}(C_{n-1,d})}{\binom nk}$ is decreasing  (if $d-k$ is even) and increasing (if $d-k$ is odd), for all $n\geq k$.
\end{itemize}
\end{conjecture}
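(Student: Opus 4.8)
The plan is to treat the two parts by quite different means: part (a) falls out of the exact face formula together with the first‑kind Stirling recurrence, while part (b) requires genuine control of a parity‑weighted quantity and is where the difficulty lies.

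\textbf{Part (a).} I would read the monotonicity directly off the exact formula. Substituting $n\mapsto n-1$ in~\eqref{eq:E_F_k_C_n_main_theorem} gives
$$
\E f_{k-1}(C_{n-1,d}) = \frac{2(k-1)!}{(n-1)!}\sum_{l\geq 0}\stirling{n}{d-2l}\stirlingsec{d-2l}{k}.
$$
Writing the same formula for $C_{n,d}$ and inserting $\stirling{n+1}{j}=n\stirling{n}{j}+\stirling{n}{j-1}$ termwise, the sum splits as
$$
\E f_{k-1}(C_{n,d}) = \frac{2(k-1)!}{n!}\Big(n\sum_{l\geq0}\stirling{n}{d-2l}\stirlingsec{d-2l}{k} + \sum_{l\geq0}\stirling{n}{d-2l-1}\stirlingsec{d-2l}{k}\Big) = \E f_{k-1}(C_{n-1,d}) + \frac{2(k-1)!}{n!}\sum_{l\geq0}\stirling{n}{d-2l-1}\stirlingsec{d-2l}{k}.
$$
Since all Stirling numbers are nonnegative, the correction term is $\geq 0$, so $n\mapsto\E f_{k-1}(C_{n-1,d})$ is nondecreasing (and strictly increasing as soon as the correction sum has a nonzero summand). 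This disposes of (a) with essentially no computation.

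\textbf{Part (b).} Here I would pass to the probabilistic form and use the P\'olya urn coupling of Section~\ref{subsec:polya}. Write $\rho_n := \E f_{k-1}(C_{n-1,d})/\binom nk = 2\,\P[\Lah(n,k)\in\{d,d-2,d-4,\ldots\}]$ via~\eqref{eq:E_f_k_Lah}, and recall that in the urn coupling $Y_{n+1,k}=Y_{n,k}+I_{n+1}$ with $I_{n+1}\in\{0,1\}$ the indicator that the freshly added ball is a local record. The decisive point is that adding $0$ or $1$ either preserves or flips the parity of $Y_{n,k}$; conditioning on $I_{n+1}$ and comparing the events $\{Y_{n,k}\in\{d,d-2,\ldots\}\}$ and $\{Y_{n,k}\in\{d-1,d-3,\ldots\}\}$ then yields the clean identity
$$
\rho_{n+1}-\rho_n = -2(-1)^{d}\,\E\big[I_{n+1}\,(-1)^{Y_{n,k}}\ind\{Y_{n,k}\leq d\}\big] = -2(-1)^{d-k}\sum_{j=k}^{d}(-1)^{j-k}\,\P[I_{n+1}=1,\ Y_{n,k}=j].
$$
In both parity classes the asserted direction of monotonicity is therefore \emph{equivalent} to the single sign statement that the alternating partial sums $\sum_{j=k}^{d}(-1)^{j-k}a_j$ with $a_j:=\P[I_{n+1}=1,\,Y_{n,k}=j]\geq 0$ are nonnegative for every $d$. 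A purely algebraic shadow of this is available too: with $G_{n,k}(d)=\sum_{l\geq0}\stirling{n}{d-2l}\stirlingsec{d-2l}{k}$, the two Stirling recurrences give $G_{n+1,k}(d)=nG_{n,k}(d)+kG_{n,k}(d-1)+G_{n,k-1}(d-1)$, whence $\rho_{n+1}-\rho_n=\frac{2(k-1)!}{n!\binom{n+1}{k}}\big(kG_{n,k}(d-1)+G_{n,k-1}(d-1)-\tfrac{nk}{n-k+1}G_{n,k}(d)\big)$, a form suited to induction on $d$ and $k$ using the balance identity $\sum_{j=k}^{n}(-1)^{n-j}\stirling{n}{j}\stirlingsec{j}{k}=0$ (for $n>k$) to move between the two parity classes.

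The main obstacle is exactly the sign of this alternating sum. Unlike part (a), where nonnegativity of the Stirling numbers handed us the result, the terms here genuinely alternate, so one needs structural control of the joint law of the record indicator $I_{n+1}$ and the parity of $Y_{n,k}$ — equivalently, of the sign pattern of the partial sums of $(-1)^{j}\stirling{n}{j}\stirlingsec{j}{k}$. A natural sufficient condition would be a monotonicity or log‑concavity property of the sequence $(a_j)_j$, in the spirit of Proposition~\ref{prop:log_concave}, which (by a Leibniz‑type argument) would force the alternating partial sums to stay nonnegative; establishing such a property, and understanding how the normalization by $\binom nk$ together with the cross‑term $G_{n,k-1}(d-1)$ (which couples the induction across $k$) makes the outcome parity‑sensitive, is the crux on which the whole statement turns.
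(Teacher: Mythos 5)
The paper does not actually prove this statement: it is stated as a conjecture and the authors only report a numerical verification (for $d\leq 50$, $1\leq k\leq d$, $n\leq k+100$), so there is no proof of record to compare against. Judged on its own merits, your part (a) is, as far as I can check, a correct and complete argument: replacing $n$ by $n-1$ in~\eqref{eq:E_F_k_C_n_main_theorem} and applying $\stirling{n+1}{m}=n\stirling{n}{m}+\stirling{n}{m-1}$ termwise does yield
$\E f_{k-1}(C_{n,d})=\E f_{k-1}(C_{n-1,d})+\frac{2(k-1)!}{n!}\sum_{l\geq0}\stirling{n}{d-2l-1}\stirlingsec{d-2l}{k}$,
and the correction is nonnegative (strictly positive via the $l=0$ term whenever $2\leq d\leq n+1$). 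This actually settles part (a) of the conjecture, something the paper does not do; you should only add the routine remark that~\eqref{eq:E_F_k_C_n_main_theorem} is stated for $n\geq d$, so the range $k\leq n\leq d$ has to be handled separately (there $C_{n-1,d}$ is a.s.\ a simplex and $\E f_{k-1}=\binom nk$), and note the degenerate case $d=k=1$ where the function is constant equal to $2$, so ``increasing'' can only be meant in the non-strict sense.

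Part (b), however, is not a proof, and you say so yourself. The reduction via the P\'olya urn coupling to the sign statement $\sum_{j=k}^{d}(-1)^{j-k}\,\P[I_{n+1}=1,\,Y_{n,k}=j]\geq 0$ is correct and is a genuinely useful reformulation (I checked both the parity bookkeeping and the algebraic identity for $G_{n+1,k}(d)$), but the crux --- nonnegativity of these alternating partial sums --- is exactly where the conjecture lives, and it is left unresolved. Moreover, the sufficient condition you float would not close the gap: log-concavity of $(a_j)_j$ does \emph{not} force nonnegative alternating partial sums (the log-concave sequence $1,3,1$ has alternating sum $-1$), and monotone nonincreasing, which would suffice, fails because $a_j$ is roughly unimodal with mode well to the right of $j=k$. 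So the proposal proves half of the conjecture by an elementary identity the paper does not record, and reduces the other half to a concrete but still open positivity question; it does not establish part (b).
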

In the setting of Cover-Efron and Schl\"afli random cones, a function similar to that appearing in (b) is always decreasing, as has been recently shown by Hug and Schneider~\cite{HugSchneiderThresholdPhenomenaPart2}.

\subsection{Threshold phenomena for face numbers: the regime of linearly growing \texorpdfstring{$k$}{k}}
Let us now turn to the proportional growth regime. It has been first studied by Vershik and Sporyshev~\cite{vershik_sporyshev_asymptotic_faces_random_polyhedra1992} in the context of random projections of the regular simplex.
\begin{theorem}[Weak threshold in the linear regime]\label{theo:phase_trans_expect_linear_k}
Let $d\to\infty$ and $k=k(d)$, $n=n(d)$  be functions of $d$ such that
$$
k\sim \alpha n  \qquad \text{ and } \qquad  d\sim \beta n, \qquad  \text{ as } d\to\infty,
$$
for some constants $0 <\alpha < \beta <1$.
Then,
$$
\lim_{d\to\infty} \frac {\E f_{k-1}(C_{n-1,d})}{\binom{n}{k}}
=
\begin{cases}
1, &\text{ if } \beta > -\frac{\alpha \log \alpha}{1-\alpha},\\
0, &\text{ if } \beta < -\frac{\alpha \log \alpha}{1-\alpha}.
\end{cases}
$$
In the critical case, more precisely, when $k\sim \alpha n$ and $d= \E \Lah (n,k) + c\sqrt n + o(\sqrt n)$ for some $\alpha\in (0,1)$ and $c\in \R$, we have
$$
\lim_{d\to\infty} \frac {\E f_{k-1}(C_{n-1,d})}{\binom{n}{k}} =
\frac1 {\sqrt{2\pi}} \int_{-\infty}^{c/\sigma(\alpha)}\eee^{-x^2/2} \dd x,
$$
where $\sigma^2(\alpha) := -\frac{\alpha}{1-\alpha} - \frac{\alpha(\alpha+1)\log \alpha}{(1-\alpha)^2} - \frac{\alpha^2 \log^2 \alpha}{(1-\alpha)^3}$ is the variance appearing in Theorem~\ref{thm:clt_central_regime}.
\end{theorem}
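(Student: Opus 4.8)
The plan is to derive everything from the two identities \eqref{eq:E_f_k_Lah} and \eqref{eq:E_f_k_Lah_dual}, which rewrite the normalized expected face number entirely in terms of the Lah distribution. Write $h(\alpha):=-\frac{\alpha\log\alpha}{1-\alpha}$, which is the asymptotic value of $\E\Lah(n,k)/n$ by Theorem~\ref{theo:expect_asympt}. The two non-critical regimes reduce to the weak law of large numbers $\Lah(n,k)/n\toprobab h(\alpha)$, which follows at once from the distributional convergence in Theorem~\ref{thm:clt_central_regime} together with $\E\Lah(n,k)\sim h(\alpha)n$: indeed $(\Lah(n,k)-\E\Lah(n,k))/n\toprobab 0$, and $\E\Lah(n,k)/n\to h(\alpha)$. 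For $\beta>h(\alpha)$ I would use the dual identity to estimate
$$
1-\frac{\E f_{k-1}(C_{n-1,d})}{\binom nk}=2\P[\Lah(n,k)\in\{d+2,d+4,\ldots\}]\le 2\P[\Lah(n,k)\ge d],
$$
which tends to $0$ because $d/n\to\beta>h(\alpha)$; symmetrically, for $\beta<h(\alpha)$ the direct identity \eqref{eq:E_f_k_Lah} gives $\E f_{k-1}(C_{n-1,d})/\binom nk\le 2\P[\Lah(n,k)\le d]\to 0$.

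The critical case is the substantive one, and the obstacle is exactly the one met in Theorem~\ref{theo:phase_trans_expect_constant_k}: the right-hand side of \eqref{eq:E_f_k_Lah} runs only over values of one parity $(d,d-2,d-4,\ldots)$, with a compensating factor $2$, and one must show it equals $\P[\Lah(n,k)\le d]+o(1)$. I would set $p_i:=\P[\Lah(n,k)=i]$, $S:=\sum_{j\ge0}p_{d-2j}$ and $S':=\sum_{j\ge0}p_{d-1-2j}$, so that $S+S'=\P[\Lah(n,k)\le d]$ while $S-S'=\sum_{i\le d}(-1)^{d-i}p_i$. Two facts then finish the case. First, the maximal atom $p_\ast:=\max_i p_i$ vanishes: the Lah distribution is log-concave by Proposition~\ref{prop:log_concave}, and a discrete log-concave law with variance $V$ has maximal atom $O(1/\sqrt V)$ (bound the mass off the mode by a geometric series and use $\sum_i p_i=1$); since $\Var\Lah(n,k)\to\infty$ by Theorem~\ref{thm:clt_central_regime}, this forces $p_\ast\to 0$. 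Second, because $\Lah(n,k)$ is unimodal (Corollary~\ref{cor:unimodal}), the parity defect $S-S'$ is controlled by $p_\ast$: splitting the range $\{i\le d\}$ at the mode into a nondecreasing and a nonincreasing block, each block contributes an alternating sum of a monotone sequence, bounded in absolute value by its largest term, so $|S-S'|\le 2p_\ast$. This step never uses the location of the mode, which is precisely what lets it replace the explicit mode estimate (Proposition~\ref{prop:mode}) invoked in the constant-$k$ proof.

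Combining the two facts yields
$$
\frac{\E f_{k-1}(C_{n-1,d})}{\binom nk}=2S=(S+S')+(S-S')=\P[\Lah(n,k)\le d]+O(p_\ast)=\P[\Lah(n,k)\le d]+o(1).
$$
It then remains to evaluate the endpoint probability in the critical scaling $d=\E\Lah(n,k)+c\sqrt n+o(\sqrt n)$, where Theorem~\ref{thm:clt_central_regime} applies directly:
$$
\P[\Lah(n,k)\le d]=\P\!\left[\frac{\Lah(n,k)-\E\Lah(n,k)}{\sqrt n}\le c+o(1)\right]\longrightarrow \frac{1}{\sqrt{2\pi}}\int_{-\infty}^{c/\sigma(\alpha)}\eee^{-x^2/2}\,\dd x,
$$
which is the claimed limit.

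The main obstacle is the pair of elementary but essential estimates in the critical case, namely the vanishing of the maximal atom $p_\ast$ and the $O(p_\ast)$ bound on the parity defect $S-S'$. I expect the log-concavity input (Proposition~\ref{prop:log_concave}) to do the decisive work, converting the bare CLT on scale $\sqrt n$ into the local smoothness needed to pass from a single-parity sum to a genuine distribution function; everything else is a routine application of the weak law of large numbers and Theorem~\ref{thm:clt_central_regime}.
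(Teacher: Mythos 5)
Your proposal is correct, but it departs from the paper's proof in two places, both in interesting ways. For the non-critical cases the paper invokes the large deviations principle (Theorem~\ref{theo:ldp_central} via \eqref{eq:LDP_Lah_leq}--\eqref{eq:LDP_Lah_geq}), which gives exponential decay of $\P[\Lah(n,k)\le d]$ resp.\ $\P[\Lah(n,k)\ge d]$; you instead use only the weak law of large numbers extracted from Theorem~\ref{thm:clt_central_regime} and Theorem~\ref{theo:expect_asympt}. That is a lighter tool and entirely sufficient for the stated $0$--$1$ dichotomy (the LDP is needed in the paper only later, for the \emph{strong} threshold, Theorem~\ref{theo:phase_trans_expect_linear_k_strong}). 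In the critical case the paper handles the parity issue by locating the mode to precision $m_{n,k}=\E\Lah(n,k)+o(\sqrt n)$ and then sandwiching $2\P[\Lah(n,k)\in\{d,d-2,\ldots\}]$ between $\P[\Lah(n,k)\le d-1]$ and $\P[\Lah(n,k)\le d+1]$ exactly as in Theorem~\ref{theo:phase_trans_expect_constant_k}; you replace this by the bound $|S-S'|\le 2p_*$ on the parity defect (valid for any unimodal law, by splitting the alternating sum at the mode) together with $p_*\to 0$. Both of your auxiliary facts check out: the alternating-sum bound is elementary, and the estimate $p_*=O(1/\sqrt{\Var})$ for discrete log-concave laws is true and provable by exactly the geometric-series argument you sketch (from the mode, the pmf drops below $p_*/2$ within $2/p_*$ steps by the mass constraint, and log-concavity then forces geometric decay on scale $2/p_*$, so the second moment about the mode is $O(p_*^{-2})$). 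Two small points you should make explicit: (i) $\Var\Lah(n,k)\to\infty$ does not follow from convergence in distribution alone without a word about lower semicontinuity of the second moment under weak convergence (or cite the cumulant asymptotics \eqref{eq:cumulants_linearly}); and (ii) $p_*\to 0$ can in fact be obtained from the CLT directly, since any single atom sits inside an interval of length $\eps\sqrt n$ whose limiting probability is at most $\Phi((x+\eps)/\sigma)-\Phi((x-\eps)/\sigma)$, so the log-concavity input, while correct, is not strictly needed. Your approach buys independence from the mode-location estimate (Proposition~\ref{prop:mode} has no analogue proved in the linear regime, which is precisely why the paper has to lean on the $o(\sqrt n)$ mode claim), at the cost of the extra maximal-atom lemma.
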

\begin{proof}
By~\eqref{eq:E_f_k_Lah} we have
\begin{equation}\label{eq:wspom1}
\frac {\E f_{k-1}(C_{n-1,d})}{\binom nk}= 2 \P[\Lah(n,k) \in \{d,d-2,d-4,\ldots\}] \leq 2 \P[\Lah(n,k) \leq d] = 2 \P[\Lah(n,k) \leq (\beta + o(1))n].
\end{equation}
If now $\beta < -\frac{\alpha \log \alpha}{1-\alpha}$, then~\eqref{eq:LDP_Lah_leq} is applicable and shows that the probability on the right-hand side converges to $0$ exponentially fast.

On the other hand, by~\eqref{eq:E_f_k_Lah_dual} we have
\begin{equation}\label{eq:wspom2}
1 - \frac {\E f_{k-1}(C_{n-1,d})}{\binom{n}{k}}=2 \P[\Lah(n,k) \in \{d+2,d+4,\ldots\}] \leq 2\P[\Lah(n,k) \geq d] = 2 \P[\Lah(n,k) \geq (\beta + o(1))n].
\end{equation}
If $\beta > -\frac{\alpha \log \alpha}{1-\alpha}$, then~\eqref{eq:LDP_Lah_geq} is applicable and shows that the probability on the right-hand side converges to $0$ exponentially fast.

Consider finally the critical case. Using the unimodality of the Lah distribution (with the mode satisfying $m_{n,k} = \E \Lah(n,k) + o(\sqrt n)$ by Theorem~\ref{thm:clt_central_regime}) and arguing as in the proof of Theorem~\ref{theo:phase_trans_expect_constant_k}, our task reduces to showing that
$$
\lim_{d\to\infty} \P[\Lah(n,k) \leq d] = \frac1 {\sqrt{2\pi}} \int_{-\infty}^{c/\sigma(\alpha)}\eee^{-x^2/2} \dd x.
$$
But this follows by writing
$$
\P[\Lah(n,k) \leq d]
=
\P\left[\frac{\Lah(n,k)-\E \Lah (n,k)}{\sqrt n} \leq \frac{d- \E \Lah (n,k)}{\sqrt n}\right]
=
\P\left[\frac{\Lah(n,k)-\E \Lah (n,k)}{\sqrt n} \leq c+o(1)\right]
$$
and applying Theorem~\ref{thm:clt_central_regime}.
\end{proof}

\begin{remark}\label{rem:weak_threshold}
Let us restate the above results in the notation consistent with the one used by Vershik and Sporyshev~\cite{vershik_sporyshev_asymptotic_faces_random_polyhedra1992} and Donoho and Tanner~\cite{donoho_tanner_neighborliness,donoho_tanner}. Following these papers, define
\begin{equation}\label{eq:rho_delta}
\rho := \lim_{n\to\infty} \frac{k(n)}{d(n)} = \frac {\alpha}{\beta} \in (0,1),
\qquad
\delta:=\lim_{n\to\infty} \frac{d(n)}{n} = \beta \in (0,1).
\end{equation}
Similarly to these papers, we say that a function $\delta \mapsto \rho_{\text{weak}}(\delta)$ defines a \textit{weak threshold} for convex hulls of random walks if
\begin{align}
\E f_{k-1}(C_{n-1,d}) &= (1-o(1)) \cdot \binom nk, & \quad &\text{ provided that } \rho < \rho_{\text{weak}} (\delta), \label{eq:weak_threshold_1}\\
\E f_{k-1}(C_{n-1,d}) &= o(1) \cdot  \binom nk, & \quad    &\text{ provided that } \rho > \rho_{\text{weak}} (\delta).
\label{eq:weak_threshold_2}
\end{align}
Using Theorem~\ref{theo:phase_trans_expect_linear_k}, we are able to identify the weak threshold explicitly in terms of the Lambert function $W_{-1}(x)$ which is defined as follows: For $-1/\eee < x < 0$, the equation $w\eee^{w} = x$ has two solutions, $w=W_0(x)$ and $w=W_{-1}(x)$, satisfying $W_{-1}(x) < -1 < W_0(x)<0$ and defining two branches of the Lambert function.  Then, we claim that the function
$$
\rho_{\text{weak}}(\delta) := h^{-1}(\delta)/\delta = -1/ W_{-1}(-\delta \eee^{-\delta}), \qquad \delta\in (0,1),
$$
is the weak threshold; see formula~\eqref{eq:h_def} and Figure~\ref{fig:thresholds} (solid line). Indeed,  $\rho < \rho_{\text{weak}}(\delta)$ is equivalent to $\alpha/\beta < h^{-1}(\beta) / \beta$, which is equivalent to $\beta > h(\alpha)$. Knowing this, Theorem~\ref{theo:phase_trans_expect_linear_k} applies and yields~\eqref{eq:weak_threshold_1}, while~\eqref{eq:weak_threshold_2} follows similarly. Note that $\rho = \rho_{\text{weak}}(\delta)$ is   the unique solution to $(1/\rho) \eee^{-1/\rho} = \delta \eee^{-\delta}$ with $\rho \in (0,1)$, which can be compared to~\cite[Theorem~1]{vershik_sporyshev_asymptotic_faces_random_polyhedra1992}, where a similar characterization of the threshold (involving the Mills ratio function) is given for random projections of the regular simplex.
Regarding the behavior of the weak threshold as $\delta\downarrow 0$, it is easy to check that $\rho_{\text{weak}}(\delta) \sim 1/ |\log \delta|$, compare~\cite[Theorems~1.2, 1.4]{donoho_tanner}, where similar asymptotics are stated for weak thresholds of Gaussian polytopes, namely $\rho_{\text{weak}}^{\text{GP}}(\delta) \sim 1/(2|\log \delta|)$, and their symmetric analogues.
\end{remark}

\begin{figure}[t]
\begin{center}
\includegraphics[width=0.5\textwidth ]{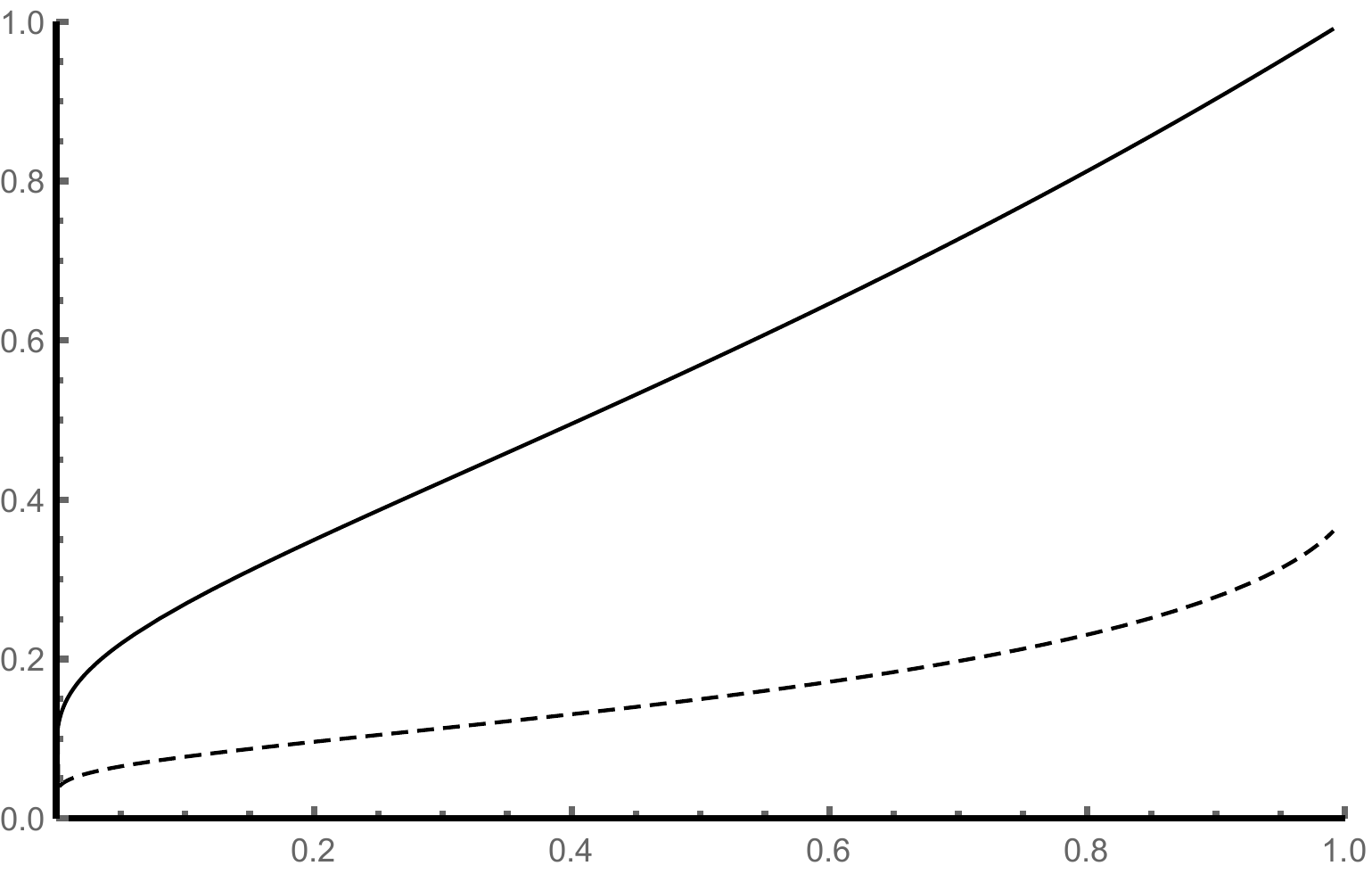}
\end{center}
\caption
{
Thresholds for convex hulls of random walks from top to bottom: $\rho_{\text{weak}}(\delta)$ (solid) and $\rho_{\text{strong}}(\delta)$ (dashed).
}
\label{fig:thresholds}
\end{figure}


\begin{theorem}[Strong threshold in the linear regime]\label{theo:phase_trans_expect_linear_k_strong}
Let $d\to\infty$ and $k=k(d)$, $n=n(d)$  be functions of $d$ such that
$$
k\sim \alpha n  \qquad \text{ and } \qquad  d\sim \beta n, \qquad  \text{ as } d\to\infty,
$$
for some constants $0 < \alpha < \beta <1$.
If $I_{\alpha}(\beta) + \alpha \log \alpha + (1-\alpha) \log (1-\alpha) >0$, where $I_\alpha(\beta)$ is the rate function from Theorem~\ref{theo:ldp_central}, then
\begin{equation}\label{eq:strong_thresh1}
\binom{n}{k}  - \E f_{k-1}(C_{n-1,d}) = O(\eee^{-\eta n})
\end{equation}
for some $\eta>0$, and the polytope $C_{n-1,d}$ is $k$-neighborly with probability converging to $1$,  more precisely,
\begin{equation}\label{eq:strong_thresh2}
\P\left[f_{k-1}(C_{n-1,d}) = \binom nk \right] = 1- O(\eee^{-\eta n}).
\end{equation}
If, on the other hand, $I_{\alpha}(\beta) + \alpha \log \alpha + (1-\alpha) \log (1-\alpha) < 0$, then
\begin{equation}\label{eq:strong_thresh3}
\lim_{d\to\infty} \left(\binom{n}{k} - \E f_{k-1}(C_{n-1,d})\right) = +\infty.
\end{equation}
\end{theorem}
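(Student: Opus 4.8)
The plan is to extract the exponential order of both sides of the identity \eqref{eq:E_f_k_Lah_dual} and to compare it with the threshold quantity $g(\beta):=I_\alpha(\beta)+\alpha\log\alpha+(1-\alpha)\log(1-\alpha)$, where $\beta=\lim_{d\to\infty}d/n$. The two ingredients are the binomial asymptotics \eqref{eq:asympt_binomial_coeff}, giving $\tfrac1n\log\binom nk\to-(\alpha\log\alpha+(1-\alpha)\log(1-\alpha))$, and the large deviation rate \eqref{eq:rate_LDP_explicit}, giving $\tfrac1n\log\P[\Lah(n,k)=j]\to-I_\alpha(\beta)$ whenever $j\sim\beta n$. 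Multiplying, one gets the heuristic $\binom nk\,\P[\Lah(n,k)=\lfloor\beta n\rfloor]=\eee^{-g(\beta)n+o(n)}$, so that the sign of $g(\beta)$ decides whether a single atom of $\binom nk\,\P[\Lah(n,k)=\cdot]$ decays or explodes. The entire proof is about turning this heuristic into matching bounds for the sum $\binom nk\,\P[\Lah(n,k)\in\{d+2,d+4,\ldots\}]$ that governs \eqref{eq:E_f_k_Lah_dual}.

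For the upper bound (claims \eqref{eq:strong_thresh1} and \eqref{eq:strong_thresh2}) I would first discard the parity constraint, estimating $\P[\Lah(n,k)\in\{d+2,d+4,\ldots\}]\le\P[\Lah(n,k)\ge d]$, and then invoke the tail estimate \eqref{eq:LDP_Lah_geq}, which yields $\P[\Lah(n,k)\ge d]=\eee^{-I_\alpha(\beta)n+o(n)}$ as soon as $\beta>h(\alpha)=-\alpha\log\alpha/(1-\alpha)$. The step requiring care is precisely that \eqref{eq:LDP_Lah_geq} identifies the rate as $I_\alpha(\beta)$ only for $\beta$ above the mean location $h(\alpha)$; hence I must check that the hypothesis $g(\beta)>0$ forces $\beta>h(\alpha)$. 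On $(\alpha,h(\alpha)]$ the function $I_\alpha$ is decreasing (Remark~\ref{rem:I_alpha_beta_properties}), so $g$ attains its supremum there at $\beta=\alpha$, and by \eqref{eq:stirling_1_log_asympt} and \eqref{eq:I_s_alpha} one has $g(\alpha)=-\lim_n\tfrac1n\log\!\big(\tfrac{k!}{n!}\stirling{n}{k}\big)$. The elementary bound $\tfrac{k!}{n!}\stirling{n}{k}\ge\tfrac{k}{n-k+1}$, obtained by restricting to permutations made of $k-1$ fixed points and one $(n-k+1)$-cycle, gives $\lim_n\tfrac1n\log(\cdots)\ge0$, whence $g(\alpha)\le0$ and therefore $g\le0$ on $(\alpha,h(\alpha)]$; so $g(\beta)>0$ indeed entails $\beta>h(\alpha)$. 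Combining \eqref{eq:asympt_binomial_coeff} with \eqref{eq:LDP_Lah_geq} then produces $\binom nk-\E f_{k-1}(C_{n-1,d})\le\eee^{-g(\beta)n+o(n)}=O(\eee^{-\eta n})$ for any $0<\eta<g(\beta)$, which is \eqref{eq:strong_thresh1}; and \eqref{eq:strong_thresh2} is immediate from the integer-gap inequality \eqref{eq:est_for_strong_threshold}, exactly as in the constant-$k$ case, because $\{f_{k-1}(C_{n-1,d})\neq\binom nk\}$ forces $\binom nk-f_{k-1}(C_{n-1,d})\ge1$.

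For the divergence \eqref{eq:strong_thresh3} I would avoid any tail estimate and keep a single term. Since $d+2\in\{d+2,d+4,\ldots\}$ and $d+2\sim\beta n$ lies in the support $\{k,\ldots,n\}$ for all large $n$, the identity \eqref{eq:E_f_k_Lah_dual} gives $\binom nk-\E f_{k-1}(C_{n-1,d})\ge2\binom nk\,\P[\Lah(n,k)=d+2]$, and \eqref{eq:rate_LDP_explicit} together with \eqref{eq:asympt_binomial_coeff} identifies this lower bound as $\eee^{-g(\beta)n+o(n)}$. When $g(\beta)<0$ this diverges, proving \eqref{eq:strong_thresh3}. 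A virtue of this single-atom argument is that it is insensitive to the position of $\beta$ relative to $h(\alpha)$, and hence covers uniformly both the genuinely subcritical range $\beta<h(\alpha)$ and the intermediate range $h(\alpha)<\beta<\beta^\ast$ (where $\beta^\ast$ denotes the zero of $g$ in $(h(\alpha),1)$), in which the right tail of $\Lah(n,k)$ is already exponentially thin yet still overwhelmed by the exponential growth of $\binom nk$.

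The main obstacle is the parity restriction $\{d+2,d+4,\ldots\}$ inherited from \eqref{eq:E_f_k_Lah_dual}, which blocks a direct appeal to the large deviation statements of Theorem~\ref{theo:ldp_central}. The resolution is deliberately asymmetric: for the upper bound one enlarges the event to $\{\Lah(n,k)\ge d\}$, while for the lower bound one shrinks it to the single atom $\{\Lah(n,k)=d+2\}$, and both moves leave the exponential rate $g(\beta)$ unchanged. The only input beyond the already-established LDP and the weak-threshold picture of Theorem~\ref{theo:phase_trans_expect_linear_k} is the verification that the hypothesis of the first part forces $\beta>h(\alpha)$; the combinatorial lower bound on $\stirling{n}{k}$ displayed above is the cleanest way to supply it.
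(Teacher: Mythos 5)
Your proposal is correct, and it reproduces the paper's argument for the first half while taking a genuinely different (and arguably cleaner) route in two places. For the preliminary reduction that the hypothesis $I_\alpha(\beta)+\alpha\log\alpha+(1-\alpha)\log(1-\alpha)>0$ forces $\beta>h(\alpha)$, the paper also exploits monotonicity of $I_\alpha$ on $(\alpha,h(\alpha)]$ and reduces to showing $\log\left(1-h^{-1}(\alpha)\right)-\alpha\log\left(-\log h^{-1}(\alpha)\right)\leq 0$, but it verifies this via the analytic inequality $(1-w)^{1/\alpha}\leq 1-w\leq-\log w$ with $w=h^{-1}(\alpha)$, whereas you identify the quantity as $-\lim_n\frac1n\log\bigl(\frac{k!}{n!}\stirling{n}{k}\bigr)$ and bound the normalized Stirling number below by $k/(n-k+1)$ combinatorially; both are valid, and your version has the virtue of making the sign of $I_\alpha(\alpha)+\alpha\log\alpha+(1-\alpha)\log(1-\alpha)$ transparent from the probabilistic meaning of the Stirling factor. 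The more substantive divergence is in the proof of \eqref{eq:strong_thresh3}: the paper keeps the tail event $\{\Lah(n,k)\geq d+2\}$, which requires it to first reduce to the case $\beta>h(\alpha)$ by artificially increasing $d$ by an even number and then to invoke unimodality of the Lah distribution to convert the parity-restricted sum into a genuine tail probability, before applying \eqref{eq:LDP_Lah_geq}. Your single-atom lower bound $2\binom nk\,\P[\Lah(n,k)=d+2]$ combined with the pointwise rate \eqref{eq:rate_LDP_explicit} (which holds for all $\beta\in(\alpha,1)$, on either side of $h(\alpha)$) bypasses both the case distinction and the unimodality input entirely; the only thing it costs is a factor that is subexponential and hence irrelevant. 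This is a legitimate simplification, since \eqref{eq:rate_LDP_explicit} is established in the paper independently of the G\"artner--Ellis route and is exactly the local estimate your argument needs.
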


\begin{proof}
Let $I_{\alpha}(\beta) + \alpha \log \alpha + (1-\alpha) \log (1-\alpha) >0$. First of all, we argue that this implies $\beta > -\frac{\alpha \log \alpha}{1-\alpha}=h(\alpha)$. Since the function $\beta \mapsto I_{\alpha}(\beta)$ decreases as $\beta$ moves from $\alpha$ to $h(\alpha)$, see Remark~\ref{rem:I_alpha_beta_properties}, it suffices to show that
$$
I_{\alpha}(\alpha) + \alpha \log \alpha + (1-\alpha) \log (1-\alpha) = \log \left(1- h^{-1}(\alpha)\right) -\alpha \log \left( - \log h^{-1}(\alpha)\right) \leq 0,
$$
see~\eqref{eq:I_s_alpha} for the first identity. The inequality follows from the estimate
$$
(1-w)^{1/\alpha}\leq 1-w\leq -\log w,\quad w\in(0,1],\quad \alpha\in (0,1],
$$
upon substitution $w=h^{-1}(\alpha)\in (0,1)$ and taking the logarithms.

We now proceed to the proof of~\eqref{eq:strong_thresh1} and~\eqref{eq:strong_thresh2}.
Using~\eqref{eq:est_for_strong_threshold} and~\eqref{eq:wspom2}, we obtain
\begin{equation}
\P\left[f_{k}(C_{n-1,d}) \neq \binom nk \right] \leq \binom nk \left(1- \frac{\E f_{k-1}(C_{n-1,d})}{\binom nk}\right)
\leq
2 \binom nk  \P[\Lah(n,k) \geq (\beta + o(1))n].
\end{equation}
By the Stirling formula,
\begin{equation}\label{eq:binom_stirl}
\binom nk = \eee^{ - (\alpha \log \alpha + (1-\alpha) \log (1-\alpha)) n +o(n)}, \qquad n\to\infty.
\end{equation}
Under the condition $\beta > -\frac{\alpha \log \alpha}{1-\alpha}$  we can apply~\eqref{eq:LDP_Lah_geq} which yields
\begin{equation}\label{eq:wspom_LDP}
\P[\Lah(n,k) \geq (\beta + o(1))n]  =  \eee^{- I_\alpha(\beta) n +o(n)}.
\end{equation}
Taking everything together, we obtain the claims~\eqref{eq:strong_thresh1} and~\eqref{eq:strong_thresh2}.

To prove~\eqref{eq:strong_thresh3}, assume that $I_{\alpha}(\beta) + \alpha \log \alpha + (1-\alpha) \log (1-\alpha) < 0$. By~\eqref{eq:E_f_k_Lah_dual}, we have
$$
\binom{n}{k} - \E f_{k-1}(C_{n-1,d}) = \binom{n}{k} \left(1 - \frac {\E f_{k-1}(C_{n-1,d})}{\binom nk}\right)
=
2 \binom{n}{k} \P[\Lah(n,k) \in \{d+2,d+4,\ldots\}].
$$
We may assume that $\beta > -\frac{\alpha \log \alpha}{1-\alpha}$ since otherwise we may increase $d$ by an even number (without changing the corresponding $n$ and $k$),  which makes the probability on the right-hand side smaller.  Under $\beta > -\frac{\alpha \log \alpha}{1-\alpha}$, we can use the unimodality of $\Lah (n,k)$ as in the proof of Theorem~\ref{theo:phase_trans_expect_constant_k} to estimate
$$
\binom{n}{k} - \E f_{k-1}(C_{n-1,d}) \geq  \binom{n}{k} \P[\Lah(n,k) \geq d+2]
=
\binom{n}{k}\P[\Lah(n,k) \geq (\beta + o(1))n].
$$
It follows from~\eqref{eq:binom_stirl} and~\eqref{eq:wspom_LDP} that the right-hand side is larger than $\eee^{\eta n}$, for some $\eta>0$ and all sufficiently large $n$.
\end{proof}

\begin{figure}[t]
\begin{center}
\includegraphics[width=0.4\textwidth ]{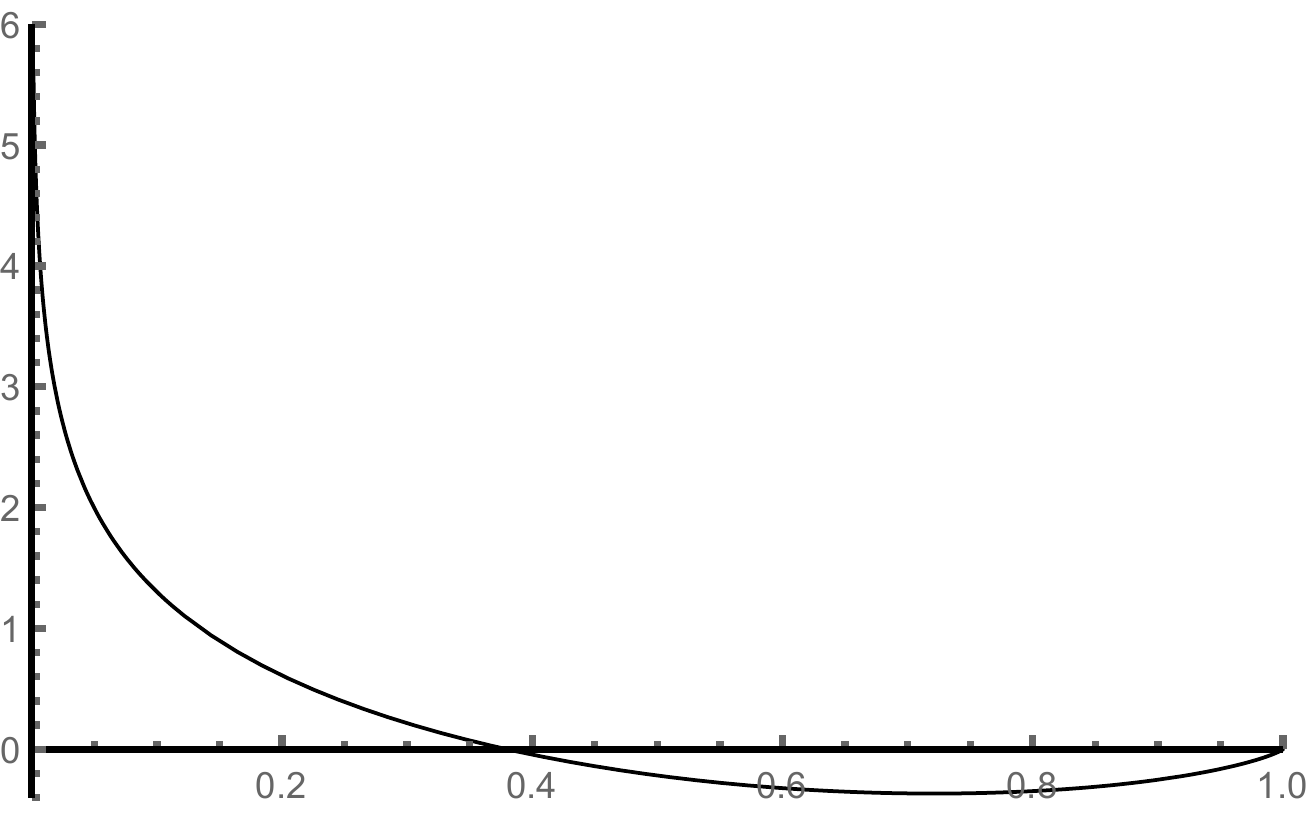}
\includegraphics[width=0.4\textwidth ]{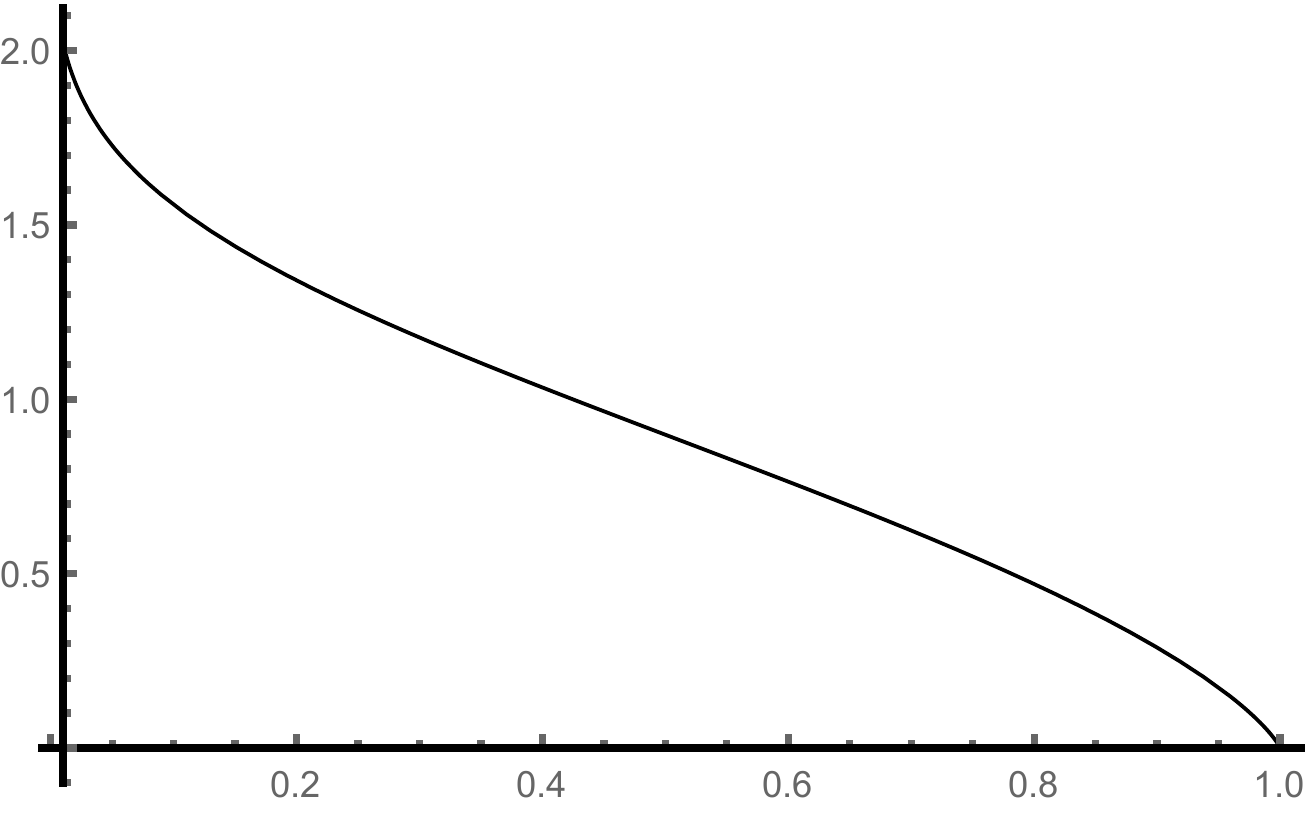}
\end{center}
\caption
{
Functions needed to define the strong threshold. Left: $\rho \mapsto -\rho \log (h^{-1}(1/\rho) - 1) + \log \log h^{-1}(1/\rho)$. Right: $\delta \mapsto -(1/\delta) \log (1 - h^{-1}(\delta))+ \log(-\log h^{-1}(\delta))$.
}
\label{fig:thresholds_strong}
\end{figure}


\begin{remark}\label{rem:strong_threshold}
Let us restate the above results in the notation of Donoho and Tanner~\cite{donoho_tanner_neighborliness,donoho_tanner}. We assume~\eqref{eq:rho_delta}. A function  $\delta\mapsto \rho_{\text{strong}}(\delta)$ is said to be a \textit{strong threshold} for convex hulls of random walks if
\begin{align}
\lim_{n\to\infty} \left(\binom nk - \E f_{k-1}(C_{n-1,d})\right) &= 0, & \quad &\text{ provided that } \rho < \rho_{\text{strong}} (\delta), \label{eq:strong_threshold_1}\\
\lim_{n\to\infty} \left(\binom nk - \E f_{k-1}(C_{n-1,d})\right) &= +\infty, & \quad    &\text{ provided that } \rho > \rho_{\text{strong}} (\delta).
\label{eq:strong_threshold_2}
\end{align}
Theorem~\ref{theo:phase_trans_expect_linear_k_strong} yields the following description of the strong threshold:  $\rho = \rho_{\text{strong}}(\delta)\in (0,1)$ is the solution of the equation
\begin{equation}\label{eq:rho_delta_implicit}
-\rho \log (h^{-1}(1/\rho) - 1) + \log \log h^{-1}(1/\rho) = -(1/\delta)  \log (1 - h^{-1}(\delta))+ \log(-\log h^{-1}(\delta)),
\end{equation}
for $\delta\in (0,1)$; see Figure~\ref{fig:thresholds} (dashed line). The plots shown in Figure~\ref{fig:thresholds_strong}  suggest that the right-hand side, viewed as a function of $\delta\in (0,1)$, decreases from $+\infty$ to $0$, whereas the left-hand side, viewed as a function of $\delta \in (0, \rho_*)$ with $\rho_*= 0.3798\ldots$, decreases from $+\infty$ to $0$, even though we did not verify these claims rigorously. Hence, the solution to the above equation~\eqref{eq:rho_delta_implicit} exists and is unique. Now, let us prove~\eqref{eq:strong_threshold_1} and~\eqref{eq:strong_threshold_2}.  Recalling~\eqref{eq:I_2} and~\eqref{eq:rho_delta_implicit} we have
$$
\frac{I_\alpha(\beta) + \alpha \log \alpha + (1-\alpha) \log (1-\alpha)}\beta
=
-\rho \log \left(h^{-1}\left(\frac 1\rho\right) - 1\right) + \frac 1 \delta \log \left(1- h^{-1}(\delta)\right)
+
\log \left( - \frac{\log h^{-1}(\frac 1 \rho)}{\log h^{-1}(\delta)}\right).
$$
If $\rho < \rho_{\text{strong}}(\delta)$, respectively, $\rho > \rho_{\text{strong}}(\delta)$, then the equality in~\eqref{eq:rho_delta_implicit} should be replaced by $>$, respectively, $<$, which is equivalent to $I_\alpha(\beta) + \alpha \log \alpha + (1-\alpha) \log (1-\alpha)>0$, respectively, $<0$. With this at hand, we can apply Theorem~\ref{theo:phase_trans_expect_linear_k_strong} which yields~\eqref{eq:strong_threshold_1}, respectively,~\eqref{eq:strong_threshold_2}.
\end{remark}

\begin{remark}
For $\delta=1/2$, that is, when the number of vertices is twice as large as the dimension, the thresholds computed in Remarks~\ref{rem:weak_threshold} and~\ref{rem:strong_threshold} are $\rho_{\text{weak}}(1/2) = 0.5693\ldots$ and $\rho_{\text{strong}}(1/2) = 0.1498\ldots$. Let us mention that for the Gaussian polytopes, respectively their symmetric versions, the thresholds are known~\cite[pp.~6,7]{donoho_tanner} to be
$$
\rho_{\text{weak}}^{\text{GP}}(1/2) = 0.5581\ldots,
\quad
\rho_{\text{strong}}^{\text{GP}}(1/2) = 0.1335\ldots,
\quad
\rho_{\text{weak}}^{\pm}(1/2) = 0.3848\ldots,
\quad
\rho_{\text{strong}}^{\pm}(1/2) = 0.0894\ldots.
$$
Numerically, $\rho_{\text{weak}}(\delta) > \rho_{\text{weak}}^{\text{GP}}(\delta)$ for all $\delta\in (0,1)$, and the difference of these functions is surprisingly close (but not equal to) $0$.   Thus, convex hulls of random walks are slightly more neighborly than Gaussian polytopes.
\end{remark}

\subsection{Threshold phenomena for face numbers: the intermediate regime}
Let us now take some very large dimension $d$ and look at the number of $(k-1)$-dimensional faces, where $k\to\infty$ but $k=o(d)$. The next theorem states that a phase transition occurs if $n$ is near $k \eee^{d/k}$.

\begin{theorem}[Weak threshold in the intermediate regime]\label{theo:phase_trans_expect_intermediate_k}
Let $d\to\infty$ and $k=k(d)$ be a function of $d$ such that
$$
\lim_{d\to\infty} k(d) = \infty
\qquad
\text{ and }
\qquad
\lim_{d\to\infty} \frac{k(d)}{d} = 0.
$$
If an integer  sequence $n=n(d)$ is such that $d <  n(d) \leq k \eee^{(1-\eps) d / k}$ for some $\eps>0$ and all sufficiently large $d$, then
\begin{equation}\label{eq:intermediate_to_1}
\lim_{d\to\infty} \frac {\E f_{k-1}(C_{n-1,d})}{\binom nk}
=
1.
\end{equation}
On the other hand, if  $n(d) \geq k \eee^{(1+\eps) d / k}$ for some $\eps>0$ and sufficiently large $d$, then
\begin{equation}\label{eq:intermediate_to_0}
\lim_{d\to\infty} \frac {\E f_{k-1}(C_{n-1,d})}{\binom nk}
=
0.
\end{equation}
\end{theorem}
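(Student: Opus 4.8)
The plan is to reduce both assertions to the intermediate-regime weak law of large numbers, Theorem~\ref{theo:weak_LLN_intermediate}, via the two exact representations \eqref{eq:E_f_k_Lah} and \eqref{eq:E_f_k_Lah_dual} of the normalized expected face count in terms of the Lah distribution. First I would verify that the pair $(n,k)$ falls into the intermediate regime \eqref{eq:intermediate}: the hypothesis $k\to\infty$ is given, while $n>d$ and $k=o(d)$ yield $k/n < k/d \to 0$, and also $n/k > d/k \to\infty$. Hence Theorem~\ref{theo:weak_LLN_intermediate} applies and gives
$$
\frac{\Lah(n,k)}{\mu_0}\toprobab 1,\qquad \mu_0 := k\log(n/k),
$$
where $\mu_0\to\infty$ because both $k\to\infty$ and $\log(n/k)\to\infty$. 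Equivalently, from Theorems~\ref{theo:expect_asympt} and~\ref{theo:weak_LLN_intermediate} one has $\E\Lah(n,k)\sim \Var\Lah(n,k)\sim \mu_0$.

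For the upper-threshold statement \eqref{eq:intermediate_to_1}, I would use \eqref{eq:E_f_k_Lah_dual} to bound
$$
1 - \frac{\E f_{k-1}(C_{n-1,d})}{\binom nk} = 2\,\P[\Lah(n,k)\in\{d+2,d+4,\ldots\}] \leq 2\,\P[\Lah(n,k)\geq d].
$$
The condition $n\leq k\eee^{(1-\eps)d/k}$ translates into $\mu_0 = k\log(n/k)\leq (1-\eps)d$, that is $d\geq (1+\eps')\mu_0$ with $\eps' = \eps/(1-\eps)>0$. Therefore $\P[\Lah(n,k)\geq d]\leq \P[\Lah(n,k)/\mu_0\geq 1+\eps']\to 0$ by the weak LLN, which yields \eqref{eq:intermediate_to_1}. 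Symmetrically, for \eqref{eq:intermediate_to_0} I would use \eqref{eq:E_f_k_Lah} to bound
$$
\frac{\E f_{k-1}(C_{n-1,d})}{\binom nk} = 2\,\P[\Lah(n,k)\in\{d,d-2,\ldots\}] \leq 2\,\P[\Lah(n,k)\leq d].
$$
Here $n\geq k\eee^{(1+\eps)d/k}$ gives $\mu_0\geq (1+\eps)d$, i.e. $d\leq (1-\eps'')\mu_0$ with $\eps''=\eps/(1+\eps)>0$, so that $\P[\Lah(n,k)\leq d]\leq \P[\Lah(n,k)/\mu_0\leq 1-\eps'']\to 0$, giving \eqref{eq:intermediate_to_0}.

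The crucial simplification, and the reason why no delicate analysis of the critical window is needed, is that in both regimes $d$ lies at a macroscopic relative distance from the typical value $\mu_0$ of $\Lah(n,k)$; consequently the crude upper bounds by $\P[\Lah(n,k)\geq d]$ and $\P[\Lah(n,k)\leq d]$ suffice, and the parity restriction to $\{d,d-2,\ldots\}$ (respectively $\{d+2,\ldots\}$) in \eqref{eq:E_f_k_Lah}--\eqref{eq:E_f_k_Lah_dual}, together with the factor $2$, is entirely harmless. The only step deserving genuine care is confirming that the intermediate-regime hypotheses \eqref{eq:intermediate} really hold along the sequence indexed by $d$; everything else is a direct application of Theorem~\ref{theo:weak_LLN_intermediate}. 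If one prefers not to invoke the weak LLN as a black box and instead argue by hand, the same two estimates follow from Chebyshev's inequality using $\E\Lah(n,k)\sim\Var\Lah(n,k)\sim\mu_0$: the gap $|d-\E\Lah(n,k)|$ is then of order $\mu_0$ (or of order $d$), whereas the standard deviation is only $O(\sqrt{\mu_0})$, so the tail probability decays like $O(1/\mu_0)$, respectively $O(1/d)$.
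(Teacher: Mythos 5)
Your proposal is correct and follows essentially the same route as the paper: both reduce the two assertions to the bounds $2\P[\Lah(n,k)\geq d]$ and $2\P[\Lah(n,k)\leq d]$ via \eqref{eq:E_f_k_Lah_dual} and \eqref{eq:E_f_k_Lah}, translate the hypotheses on $n(d)$ into a macroscopic gap between $d$ and $k\log(n/k)$, and conclude with the weak law of large numbers of Theorem~\ref{theo:weak_LLN_intermediate}. Your explicit verification that $(n,k)$ lies in the intermediate regime \eqref{eq:intermediate} is a small but worthwhile addition that the paper leaves implicit.
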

\begin{proof}
Let first $n(d) \leq k \eee^{(1-\eps) d / k}$. Then $k \log (n/k) \leq  k \log \eee^{(1-\eps) d / k} = (1-\eps) d$ for sufficiently large $d$ and hence,  recalling~\eqref{eq:E_f_k_Lah_dual}, we can write
\begin{align*}
1 - \frac {\E f_{k-1}(C_{n-1,d})}{\binom nk}
&=
2 \P[\Lah(n,k) \in \{d+2,d+4,\ldots\}]
\leq
2 \P[\Lah(n,k) >d]\\
&
\leq
2 \P\left[\frac{\Lah(n,k)}{k \log (n/k)} > \frac{d}{k \log (n/k)}\right]
\leq
2 \P\left[\frac{\Lah(n,k)}{k \log (n/k)} > \frac{1}{1-\eps}\right]
\tond 0,
\end{align*}
where the last step holds by Theorem~\ref{theo:weak_LLN_intermediate}. This proves~\eqref{eq:intermediate_to_1}.
Let now $n(d) \geq k \eee^{(1+\eps) d / k}$. Then $k \log (n/k) > k \log \eee^{(1+\eps) d / k} = (1+\eps) d$ for sufficiently large $d$ and hence, in view of~\eqref{eq:E_f_k_Lah} we obtain
\begin{align*}
\frac {\E f_{k-1}(C_{n-1,d})}{\binom{n}{k}}
&=
2 \P[\Lah(n,k) \in \{d,d-2,d-4,\ldots\}]
\leq
2 \P[\Lah(n,k) \leq d]\\
&\leq
2 \P\left[\frac{\Lah(n,k)}{k \log (n/k)} \leq \frac{d}{k \log (n/k)}\right]
\leq
2 \P\left[\frac{\Lah(n,k)}{k \log (n/k)} \leq  \frac{1}{1+\eps}\right]
\tond 0,
\end{align*}
where in the last step we used Theorem~\ref{theo:weak_LLN_intermediate}. This proves~\eqref{eq:intermediate_to_0}.
\end{proof}
In the setting of Gaussian polytopes, the intermediate regime has been studied in~\cite{donoho_tanner}.
Note  that the central limit theorem conjectured in Remark~\ref{rem:CLT_intermedate_conj} would imply a formula for the limit in the critical window.

\section{Conic intrinsic volume sums of Weyl chambers}\label{sec:weyl}
Let us mention an interpretation of the Lah distribution in terms of conic intrinsic volumes. To each convex cone $C\subset \R^n$ it is possible to associate a sequence of quantities $\upsilon_0(C),\ldots, \upsilon_n(C)$ which are called conic intrinsic volumes; see~\cite[Section~6.5]{SW} and~\cite{AmelunxenLotz,ALMT14} for their definition and properties. The conic intrinsic volumes form a probability distribution meaning that they are non-negative and sum up to $1$.  For the Weyl chamber of type $A$, which is the convex cone defined by
$$
A^{(n)} := \{(x_1,\ldots,x_n)\in \R^n: x_1\geq x_2\geq \ldots \geq x_n\},
$$
the conic intrinsic volumes are well known to form the $\Lah(n,1)$-distribution meaning that
\begin{equation}\label{eq:upsilon_j_weyl_chamber}
\upsilon_j(A^{(n)}) = \P[\Lah(n,1) = j] = \frac 1 {n!} \stirling{n}{j}
\end{equation}
for all $j=1,\ldots,n$; see, e.g., \cite[Theorem~4.2]{KVZ17}. To state a more general identity involving $\Lah (n,k)$ with arbitrary $k=1,\ldots,n$, we denote by $\mathcal F_k(C)$ the set of $k$-dimensional faces of a polyhedral cone $C$, and let $T_F(C)$ be the tangent cone of $C$ at its face $F$.  The next theorem was obtained in~\cite[Theorem~3.3]{godland_kabluchko_schlaefli}; see also~\cite{godland_kabluchko_permutohedra} for related results.

\begin{theorem}[Conic intrinsic volume sums of $A^{(n)}$]\label{theo:conic_intrinsic_volumes_sums}
For all $k\in \{1,\ldots,n\}$ and $j\in \{k,\ldots,n\}$, we have
$$
\sum_{F\in \mathcal F_k(A^{(n)})} \upsilon_j(T_F(A^{(n)}))
=
\binom {n-1}{k-1} \P[\Lah(n,k) = j]
=
\frac{k!}{n!} \stirling{n}{j} \stirlingsec{j}{k}.
$$
\end{theorem}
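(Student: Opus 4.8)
The plan is to combine the explicit face structure of the Weyl chamber $A^{(n)}$ with the multiplicativity of conic intrinsic volumes under direct products, reducing the whole identity to the generating-function statement already recorded in Lemma~\ref{lem:stirling_product}.

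First I would parametrize the $k$-dimensional faces of $A^{(n)}$. The cone $A^{(n)}$ is cut out in $\R^n$ by the $n-1$ inequalities $x_i\geq x_{i+1}$, and every face is obtained by turning a subset of these into equalities. Declaring $x_i=x_{i+1}$ at the positions of some subset groups $\{1,\ldots,n\}$ into maximal runs of consecutive indices; if there are $k$ such runs, of sizes $b_1,\ldots,b_k$, then $b_1+\cdots+b_k=n$ and exactly $n-k$ equalities have been imposed (each block of size $b_i$ contributing $b_i-1$ of them), so the resulting face has dimension $k$. Hence $\mathcal{F}_k(A^{(n)})$ is in bijection with the compositions $(b_1,\ldots,b_k)$ of $n$ into $k$ positive parts, and in particular $|\mathcal{F}_k(A^{(n)})|=\binom{n-1}{k-1}$, which already matches the prefactor in the claim.

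The geometric heart is the computation of the tangent cone at such a face $F=F(b_1,\ldots,b_k)$. Choosing a point $p\in\relint F$, its coordinates are constant within each block and strictly decreasing between consecutive blocks; therefore, for small $\eps>0$, the point $p+\eps y$ lies in $A^{(n)}$ if and only if $y$ satisfies the within-block inequalities $y_i\geq y_{i+1}$, since the between-block constraints are slack at $p$ and impose nothing locally. Because the coordinate subspaces indexed by the successive blocks are mutually orthogonal, this identifies the tangent cone as a direct product in orthogonal subspaces,
$$
T_F(A^{(n)}) \cong A^{(b_1)}\times A^{(b_2)}\times\cdots\times A^{(b_k)}.
$$
Invoking the product rule $\upsilon_j(C_1\times C_2)=\sum_{j_1+j_2=j}\upsilon_{j_1}(C_1)\,\upsilon_{j_2}(C_2)$ for conic intrinsic volumes of cones lying in orthogonal subspaces (see~\cite[Section~6.5]{SW} and~\cite{AmelunxenLotz}), together with the single-chamber evaluation $\upsilon_{j'}(A^{(b)})=\frac{1}{b!}\stirling{b}{j'}$ from~\eqref{eq:upsilon_j_weyl_chamber}, one obtains
$$
\upsilon_j(T_F(A^{(n)}))=\sum_{j_1+\cdots+j_k=j}\ \prod_{i=1}^k \frac{1}{b_i!}\stirling{b_i}{j_i}.
$$

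Finally I would sum over $F\in\mathcal{F}_k(A^{(n)})$, that is, over all compositions $(b_1,\ldots,b_k)$ of $n$. Setting $G(x,t):=\sum_{b\geq 1}\sum_{j'\geq 1}\frac{1}{b!}\stirling{b}{j'}t^{j'}x^b=(1-x)^{-t}-1$, where the closed form follows from the first identity in~\eqref{eq:stirling_gen_funct}, the double sum over the block sizes and over the $j_i$ is exactly a coefficient extraction from the $k$-th power of $G$, namely
$$
\sum_{F\in\mathcal{F}_k(A^{(n)})}\upsilon_j(T_F(A^{(n)}))=[t^j][x^n]\,G(x,t)^k=[t^j][x^n]\left((1-x)^{-t}-1\right)^k .
$$
Lemma~\ref{lem:stirling_product} evaluates the right-hand side as $\frac{k!}{n!}\stirling{n}{j}\stirlingsec{j}{k}$, which in turn equals $\binom{n-1}{k-1}\P[\Lah(n,k)=j]$ by~\eqref{eq:lah_distr_def} and~\eqref{eq:lah_numb_def}, completing the proof. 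I expect the main obstacle to be the rigorous justification of the tangent-cone factorization: one must argue carefully that $T_F(A^{(n)})$ depends only on the constraints tight along $\relint F$, and that it splits as an \emph{orthogonal} direct product of the lower-dimensional chambers $A^{(b_i)}$, which is precisely what makes the intrinsic-volume product rule applicable. Everything downstream of that identification is a routine generating-function manipulation.
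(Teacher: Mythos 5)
Your proposal is correct, and the geometric skeleton (faces of $A^{(n)}$ indexed by compositions, tangent cone at a face splitting as an orthogonal product $A^{(b_1)}\times\cdots\times A^{(b_k)}$, product rule for conic intrinsic volumes, base case $\upsilon_{j'}(A^{(b)})=\frac{1}{b!}\stirling{b}{j'}$) is exactly the one the paper uses; the paper simply cites \cite[Lemma~3.12]{godland_kabluchko_schlaefli} for the tangent-cone factorization rather than rederiving it, so your direct slack-constraint argument is a welcome self-contained substitute and is sound. Where you genuinely diverge is the final combinatorial step: after reducing to a sum over compositions of products of Stirling-number weights, you package everything into the generating function $G(x,t)=(1-x)^{-t}-1$, take its $k$-th power, and invoke Lemma~\ref{lem:stirling_product}. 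The paper instead deliberately avoids generating functions at this stage: it rewrites the sum over faces as $\binom{n-1}{k-1}$ times an expectation over the \emph{uniform random} composition $(b_1^{(n)},\ldots,b_k^{(n)})$, identifies $\upsilon_j$ of the product cone with $\P[Z_{b_1^{(n)}}^{(1)}+\cdots+Z_{b_k^{(n)}}^{(k)}=j]$, and concludes directly from the representation~\eqref{eq:representation_as_a_sum_over_blocks_of_composition} of $\Lah(n,k)$. (Indeed, the paper remarks that the generating-function proof is the one given in the original reference, and its stated goal here is precisely to give the probabilistic/combinatorial alternative.) Your route buys a shorter, purely formal computation resting on an already-proved lemma; the paper's route buys a conceptual explanation of \emph{why} the answer is a Lah probability, namely that the sum of conic intrinsic volumes over $k$-faces is literally the law of the record-count statistic on a random composition. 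Both are complete proofs of the identity.
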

Note that for $k=1$ we recover~\eqref{eq:upsilon_j_weyl_chamber} because the only one-dimensional face of $A^{(n)}$ is the line $\{x_1=\ldots=x_n\}$. The proof of Theorem~\ref{theo:conic_intrinsic_volumes_sums} given in~\cite{godland_kabluchko_schlaefli} used generating functions. Let us give a combinatorial proof relying on the  construction of the Lah distribution given in~\eqref{eq:representation_as_a_sum_over_blocks_of_composition}.
\begin{proof}
By~\cite[Lemma~3.12]{godland_kabluchko_schlaefli}, the collection of the tangent cones $T_F(A^{(n)})$, where $F\in \mathcal F_k(A^{(n)})$ runs through all $k$-dimensional faces of $A^{(n)}$, coincides (up to isometries) with the collection of direct products of the form $A^{(i_1)}\times \ldots \times A^{(i_k)}$, where $(i_1,\ldots,i_k)$ runs through all compositions of $n$ in $k$ summands. Recalling that $(b_1^{(n)}, \ldots, b_k^{(n)})$ denotes a uniform random composition of $n$ in $k$ summands, we can write $$
\sum_{F\in \mathcal F_k(A^{(n)})} \upsilon_j(T_F(A^{(n)})) = \binom {n-1}{k-1} \E \upsilon_j(A^{(b_1^{(n)})}\times \cdots \times A^{(b_k^{(n)})}).
$$
Recalling from~\eqref{eq:representation_as_a_sum_over_blocks_of_composition} that $(Z^{(m)}_{i})_{i,m\in\N}$ are independent random variables with $\P[Z^{(m)}_{i}=\ell] = \frac 1 {i!}\stirling{i}{\ell} = \upsilon_\ell(A^{(i)})$, and using the formula for the conic intrinsic volumes of direct products, see formula (2.9) in \cite{AmelunxenLotz}, we get
$$
\upsilon_j(A^{(i_1)}\times \ldots \times A^{(i_k)}) =\sum_{\substack{j_1,\ldots,j_k\in \N_0\\j_1+\ldots+j_k = j}}  \upsilon_{j_1}(A^{(i_1)}) \cdot \ldots \cdot \upsilon_{j_k}(A^{(i_k)}) = \P[Z_{i_1}^{(1)} + \ldots +  Z_{i_k}^{(k)} = j].
$$
Combining everything together, we obtain
$$
\sum_{F\in \mathcal F_k(A^{(n)})} \upsilon_j(T_F(A^{(n)}))
=
\binom {n-1}{k-1} \P\left[Z_{b_1^{(n)}}^{(1)} + \ldots +  Z_{b_k^{(n)}}^{(k)} = j\right]
=
\binom {n-1}{k-1} \P[\Lah(n,k) = j]
=
\frac{k!}{n!}
 \stirling{n}{j} \stirlingsec{j}{k},
$$
where we applied the representation of the Lah distribution given in~\eqref{eq:representation_as_a_sum_over_blocks_of_composition}.
\end{proof}

In~\cite{GNP17} it has been shown that, under a minor condition, the conic intrinsic volumes of any sequence of convex cones whose dimension diverges to $\infty$ satisfy a central limit theorem. One may ask whether there is a natural convex cone $U_{n,k}$ whose conic intrinsic volumes are given by $\upsilon_j(U_{n,k}) = \P[\Lah(n,k) = j]$, for all $j\in \{k,\ldots,n\}$. We do not know how to answer this question, but Theorem~\ref{theo:conic_intrinsic_volumes_sums} states that $\P[\Lah(n,k) = j]$ is the  \textit{expected} $j$-th conic intrinsic volume of a uniformly selected \textit{random} $k$-dimensional face of the Weyl chamber $A^{(n)}$; see also~\cite[Theorem~3.1]{godland_kabluchko_schlaefli} and~\cite[Corollary~2.4]{godland_kabluchko_positive_hulls} for other examples of this type. Let us also mention that in~\cite[Lemma~6.5]{AmelunxenLotz} and~\cite[Theorem~3.14]{godland_kabluchko_permutohedra} (which look similar at a first sight) the Stirling numbers appear in a different order, that is, in the form $\stirlingsec{n}{j} \stirling{j}{k}$; see~\cite{knezevic} for a review of identities involving this and other types  of products.

\section{Appendix}
\subsection{Proof of Theorem~\ref{thm:clt_compositions}}
Recall that the distribution of the random uniform composition $(b_1^{(n)},\ldots,b_k^{(n)})$ can be represented as
\begin{equation*}
\P[(b_1^{(n)},\ldots,b_k^{(n)})\in\cdot]=\P[(G_1,\ldots,G_k)\in\cdot|G_1+\cdots+G_k=n],
\end{equation*}
where $G_1,\ldots,G_k$ are independent random variables having the same geometric law on $\N$ with parameter $\theta$. This representation holds for arbitrary $\theta\in(0,1)$ and we are free to choose $\theta:=\theta_n=k/n$.
%
As we demonstrated in Lemma~\ref{eq:centerings_equivalent}, it suffices to show that
\begin{equation*}
\left(\frac{N_j^{(n)}-k\theta_n(1-\theta_n)^{j-1}}{\sqrt{k}}\right)_{j\geq 1}\todistr \left(\mathcal{N}_j\right)_{j\geq 1}.
\end{equation*}
By the Cram\'{e}r--Wold device the last display is equivalent to
$$
\frac{\sum_{l=1}^{M}\beta_l (N_l^{(n)}-k\theta_n(1-\theta_n)^{l-1})}{\sqrt{k}}\todistr \sum_{l=1}^{M}\beta_l \mathcal{N}_l,
$$
for arbitrary fixed $M\in\N$ and $\beta_1,\beta_2,\ldots,\beta_M\in\R$. Put
$$
f_n(x):=\sum_{l=1}^{M}\beta_l (\1_{\{x=l\}}-\theta_n(1-\theta_n)^{l-1}),\quad x\in\N,
$$
and, further,
\begin{equation*}
S_{n,k}:=\sum_{j=1}^{k}G_j,\quad T_{n,k}:=\sum_{j=1}^{k}f_n(G_j).
\end{equation*}
The subsequent analysis relies on the following representation:
\begin{multline*}
\E \exp\left(\ii t k^{-1/2} \left(\sum_{l=1}^{M}\beta_l (N_l^{(n)}-k\theta_n(1-\theta_n)^{l-1})\right)\right)=\E \exp\left(\ii t k^{-1/2} \sum_{j=1}^k f_n(b_j^{(n)})\right)\\
\overset{\eqref{eq:conitional_law}}{=}\E \exp\left(\ii t k^{-1/2} \sum_{j=1}^k f_n(G_j)\Big|S_{n,k}=n\right)=\E \exp\left(\ii t k^{-1/2} T_{n,k}\Big|S_{n,k}=n\right).
\end{multline*}
Thus, it suffices to prove that, for every fixed $t\in\R$,
\begin{equation}\label{eq:clt_composition_proof1}
\lim_{n\to\infty}\E \exp\left(\ii t k^{-1/2} T_{n,k}\Big|S_{n,k}=n\right)=\E\exp\left(\ii t \sum_{l=1}^{M}\beta_l \mathcal{N}_l\right).
\end{equation}
According to Theorem 1 in \cite{holst1979} we have
\begin{align}
\E\left(\eee^{\ii t k^{-1/2}T_{n,k}}\Big| S_{n,k}=n\right)
&=
\frac{1}{2\pi\P[S_{n,k}=n]}\int_{-\pi}^{\pi}\E \eee^{\ii s (S_{n,k}-n)+\ii tk^{-1/2}T_{n,k}}{\rm d}s\notag
\\
&=\frac{1}{2\pi\sqrt{k}\P[S_{n,k}=n]}\int_{-\pi\sqrt{k}}^{\pi\sqrt{k}}\E \eee^{\ii u k^{-1/2}(S_{n,k}-n)+\ii tk^{-1/2}T_{n,k}}{\rm d}u. \label{eq:holst_representation}
\end{align}
Using the Lindeberg--Feller central limit theorem we obtain
$$
\left(\frac{S_{n,k}-n}{\sqrt{k}},\frac{T_{n,k}}{\sqrt{k}}\right)\todistr (\widetilde{N}_1,\widetilde{N}_2),
$$
where $(\widetilde{N}_1,\widetilde{N}_2)$ is a centred Gaussian vector with the following variances and covariance:
\begin{align*}
\sigma_1^2&:=\Var \widetilde{N}_1= \lim_{n\to\infty}\Var(G_1)=\frac{1-\alpha}{\alpha^2},\\
\sigma_2^2&:=\Var \widetilde{N}_2=\lim_{n\to\infty}\Var(f_n(G_1))=\sum_{l=1}^{M}\beta^2_l\alpha(1-\alpha)^{l-1}-\left(\sum_{l=1}^{M}\beta_l \alpha (1-\alpha)^{l-1}\right)^2,
\end{align*}
and
\begin{align*}
r
&:=\Cov(\widetilde{N}_1,\widetilde{N}_2)=\lim_{n\to\infty}\Cov(f_n(G_1),G_1))=\lim_{n\to\infty}\Cov\left(\sum_{l=1}^{M}\beta_l \1_{\{G_1=l\}},\sum_{l=1}^{M}l \1_{\{G_1=l\}}\right)
\\
&=\sum_{l=1}^{M}\beta_l l \alpha(1-\alpha)^{l-1}-\alpha^{-1}\sum_{l=1}^{M}\beta_l \alpha(1-\alpha)^{l-1}=\sum_{l=1}^{M}\beta_l (l\alpha-1)(1-\alpha)^{l-1}.
\end{align*}
Since $S_k-k$ has the negative binomial distribution, direct calculation shows that the limit $\lim_{n\to\infty}\sqrt{k}\P[S_k=n]$ exists and is positive. Thus, by the Lebesgue dominated convergence theorem, we deduce from~\eqref{eq:holst_representation} that
\begin{align}
\lim_{n\to\infty}\E\left(\eee^{\ii t k^{-1/2}T_{n,k}}\Big| S_{n,k}=n\right)
&={\rm const}\cdot\int_{-\infty}^{\infty}\E\exp\left(\ii u \widetilde{N}_1+\ii t\widetilde{N}_2\right){\rm d}{u}\notag
\\
&={\rm const}\cdot \int_{-\infty}^{\infty}\exp\left(-\frac{u^2\sigma_1^2+t^2\sigma_2^2+2rut}{2}\right){\rm d}u=\exp\left(-\frac{\sigma_1^2 \sigma_2^2 -r^2}{2\sigma_1^2}t^2\right). \label{eq:clt_composition_proof2}
\end{align}
To ensure applicability of the dominated convergence (which is non-trivial), one can argue as in the paper of Holst~\cite{holst1979} who relies on~\cite{lecam1958}.
It remains to note that the right-hand sides of~\eqref{eq:clt_composition_proof1} and~\eqref{eq:clt_composition_proof2} coincide as is readily seen by comparing the variances.

\section*{Acknowledgement}
ZK acknowledges support by the German Research Foundation under Germany's Excellence Strategy  EXC 2044 -- 390685587, \textit{Mathematics M\"unster: Dynamics - Geometry - Structure}  and by the DFG priority program SPP 2265 \textit{Random Geometric Systems}. AM was supported by the National Research Foundation of Ukraine (project 2020.02/0014 ``Asymptotic regimes of perturbed random walks: on the edge of modern and classical probability'').  The authors thank Thomas Godland for useful discussions and the anonymous referee for useful suggestions.

\bibliography{bibliography}
\bibliographystyle{abbrv}

\end{document}